\newtheorem{theorem}{Theorem}[section]
\newtheorem{lemma}{Lemma}[section]
\newtheorem{corollary}{Corollary}[section]
\theoremstyle{definition}
\newtheorem{definition}{Definition}[section]
\newtheorem{example}{Example}[section]
\newtheorem{remark}{Remark}[section]
\numberwithin{equation}{section}
\def\stretchx{\Bumpeq{\!\!\!\!\!\!\!\!{\longrightarrow}}}
\begin{document}

\title[Cutting Surfaces]
{Cutting surfaces and applications to periodic points and chaotic$-$like dynamics
{{ \footnote{\tiny{This work has been supported by the project
"Equazioni Differenziali Ordinarie e Applicazioni" (PRIN 2005).}}{$\;^1$}}}}
\author[M. Pireddu, F. Zanolin]
{Marina Pireddu and Fabio Zanolin}

\noindent
\begin{abstract}
\noindent In this paper we propose an elementary topological
approach which unifies and extends various different results
concerning fixed points and periodic points for maps defined on
sets homeomorphic to rectangles embedded in euclidean
spaces. We also investigate the associated discrete semidynamical
systems in view of detecting the presence of chaotic$-$like
dynamics.
\end{abstract}

\maketitle

\bigskip

\noindent {%\it
{\small 2000 AMS  subject classification~: {47H10},
{54H20},  {37B10},  {37B99}, {54C30}. } }
\\
\noindent {%\it
{\small keywords and phrases~:
Connected and locally arcwise connected metric spaces,
fixed points, periodic points,
Poincar\'{e}$-$Miranda theorem,
Leray$-$Schauder continuation theorem and its generalizations,
zeros of parameter dependent vector fields, chaotic dynamics,
topological horseshoes.
}}
\smallskip

\bigskip

\section{Introduction}\label{sec-1}
The celebrated Smale horseshoe
can be rightly considered as a prototypical
example in the study of complex systems.
It deals with a homeomorphism $\psi,$ defined on a set diffeomorphic to a rectangle
in a two$-$dimensional manifold, possessing an invariant set
$\Lambda$ such that $\psi|_{\Lambda}$ is conjugate to the two$-$sided
Bernoulli shift on two symbols. Such striking model exploits a simple
and elegant geometric description in order to display the
main features associated to all
the various different definitions of chaos.
Since the beginning \cite{Sm-65, Sm-67}, its clear and intuitive geometrical
structure turned out to be very useful in the study of dynamical systems,
allowing a rigorous proof of the presence of a complex behavior
in several significant ODE models \cite[Ch.3]{Mo-73}.
One of the crucial assumptions regarding the
implementation of this method concerns the verification of
hyperbolicity. Usually, this concept
is meant as the existence of a splitting of the domain
into a part along which $\psi$ is contracting and another one where
$\psi$ is expanding, the two parts being filled
by graphs of lipschitzean functions (see \cite{Mo-73,Wi-88}).
However, in certain applications to differential systems, the verification of
hyperbolicity requires the smoothness of the involved maps and conditions on their jacobian matrix
which lead sometimes to formidable and difficult computations (see \cite[p.62]{Mo-73}).
This remark or similar ones carried various authors to look for a class of relaxed
assumptions in view of producing a structure as rich as before,
but possibly replacing the hyperbolicity hypothesis with topological conditions
which, in some cases, require the knowledge of the behavior of $\psi$ only on some
subsets of its domain (for instance, at the boundary of certain sets).
Results in this direction
were obtained by Easton \cite{Ea-75}, Burns and Weiss \cite{BuWe-95},  Mischaikow and Mrozek
\cite{MiMr-95}, Szymczak \cite{Sz-96},  Zgliczy\'nski \cite{Zg-96},  Srzednicki and W{\'o}jcik \cite{SrWo-97}
and further developed in subsequent works (see, for instance,
\cite{MrWo-05, PiZa-05, Sr-00, Zg-99, Zg-01, ZgGi-04}
and the references therein,
just to quote a few samples from a broad bibliography).
In these papers usually the authors prove the existence of a compact set
$\Lambda$ which is (forward) invariant (either for a given map $\psi$ or for an iterate of it)
and the existence of a continuous surjection $g: \Lambda \to \Sigma_{m}:= \{0,\dots,m-1\}^{\mathbb Z}$
(or $g: \Lambda \to \Sigma_{m}^+:= \{0,\dots,m-1\}^{\mathbb N}$)
which provides a semiconjugation of $\psi|_{\Lambda}$ with the
two$-$sided (respectively one$-$sided) Bernoulli shift.
In many cases the authors also show that the inverse image through $g$ of a periodic sequence
of $m$ symbols contains a periodic point of $\psi$ in $\Lambda.$
The tools employed in the related proofs are based on different topological methods, like
the Conley index theory (with associated homological or cohomological invariants)
or some more or less sophisticated fixed point methods (as the topological degree,
the fixed point index or the Lefschetz theory) which usually require
the verification of suitable conditions (for a flow or for a map) at the boundary
of a certain domain containing the invariant set in its interior.

A different generalization of the Smale's horseshoe has been obtained
in recent years
by Kennedy and Yorke \cite{KeYo-01} and Kennedy, Ko\c{c}ak and Yorke
\cite{KeKoYo-01}, who developed the theory of the so-called
\textit{topological horseshoes} in the frame of metric spaces.
This theory concerns the study of the behavior of a continuous map $f: X\supseteq Q \to X$ and of its iterates,
where $Q$ is a compact subset of a metric space $X.$
Two disjoint compact subsets $\mbox{\it{end}}_0$ and $\mbox{\it{end}}_1$ of $Q$ are selected
and the crossing number $m$ is defined as the largest number of disjoint preconnections
contained in any connection. A connection $\Gamma\subseteq Q$ is a compact connected set
(a continuum) with $\Gamma\cap \mbox{\it{end}\,}_0\not=\emptyset$
and $\Gamma\cap \mbox{\it{end}\,}_1\not=\emptyset,$ while a preconnection $\gamma\subseteq\Gamma$
is a compact set such that $f(\gamma)$ is a connection.
Under a few more technical conditions
(see \cite{KeKoYo-01, KeYo-01} for the precise statements)
and if $m\geq 2,$ the authors
prove the existence of a compact invariant set
$Q_I\subseteq Q$ such that $f|_{Q_I}$ is semiconjugate
(via a surjective map $g$)
to a Bernoulli shift on $m$ symbols.
Applications  of such results
to different ODE models have been shown in \cite{HuYa-05, YaLi-06, YaLiHu-05, YaYa-04}
and in recent related papers. The great generality of the setting in
\cite{KeKoYo-01, KeYo-01} does not lead to the conclusion that the inverse image
(through the surjection $g$)
of a periodic sequence in $\Sigma_m$ must necessarily contain
a periodic point in $Q_I\,.$ In fact, a specific example of an invariant set
$Q_I$ without periodic points is presented in \cite[p.2520]{KeYo-01}.

Another approach has been proposed by Papini and the second author in
\cite{PaZa-02}, motivated by the study of the Poincar\'{e} map associated to a
second order nonlinear differential equation with sign changing weight
\cite{PaZa-00}. In \cite{PaZa-02} and some subsequent
more general works \cite{PaZa-04a, PaZa-04b},
the authors considered continuous planar maps which possess the property of
stretching the paths joining two opposite sides of a topological rectangle.
In this case the paths play a role analogous to that of the connections in Kennedy and Yorke theory.
Such a special configuration permits to obtain not only chaotic dynamics, but
also the existence of fixed points and periodic points
(by means of elementary topological arguments). In this manner one could complement some
results in the two$-$dimensional setting (like those in \cite{YaYa-04}) and provide
the existence of infinitely many periodic points (of any order) as well.
Other applications of the results in \cite{PaZa-02} can be found in \cite{DaPa-04, PaZa-04a},
dealing with second order ODEs.
See also \cite{PaZa->} for more information,
although we have to warn that a precise relationship between
these works and the theory of topological horseshoes
as presented in \cite{KeKoYo-01, KeYo-01} was not explicitly stated therein.
On the contrary, such relationship will be described in the present work (see
Lemma \ref{lem-3c.1}), including some details elsewhere missing.

\medskip
The aim of this paper is twofold. In fact we provide an elementary topological tool that, on the one hand,
allows us to extend to the higher dimension
the theory of \cite{PaZa-02,PaZa-04a,PaZa-04b} and to make a comparison with
the above recalled results on chaotic$-$like dynamics, while,
on the other hand, enables to
generalize and unify some previous theorems about the existence of fixed points and
periodic points for continuous maps in euclidean spaces. In particular,
we give a simplified proof of a recent result by Kampen \cite{Ka-00}
as well as we present a generalization of some preceding theorems
about periodic points associated to Markov partitions \cite{Zg-01}.
As remarked above, one of the purposes of our work is also that of
obtaining a sufficiently rich structure (suitable, for instance,
to guarantee the existence of infinitely many periodic points),
without the need of too sophisticated methods.
In fact, our main tools are, respectively, a modified version of the
classical Hurewicz$-$Wallman intersection lemma \cite[D), p.40]{HuWa-41}, \cite[p.72]{En-78}
and the fundamental theorem of Leray$-$Schauder \cite[Th\'{e}or\`{e}me Fondamental]{LeSh-34}.
The former result (also referred to Eilenberg$-$Otto \cite[Th.3]{EiOt-38},
according to \cite[Theorem on partitions, p.100]{DuGr-03}) is one of the basic lemmas
in dimension theory and it is known to be one of the equivalent versions of the
Brouwer fixed point theorem. It may be interesting to observe that extensions of
the intersection lemma led to generalizations of the Brouwer fixed point theorem to
some classes of possibly noncontinuous functions \cite{Wh-67}.
The other result we use concerns
topological degree theory and has found important applications in
nonlinear analysis and differential equations (see \cite{Ma-97}).
Actually, in Theorem \ref{th-3b.1} of Section \ref{sec-3b}, we'll
employ a more general version of the Leray$-$Schauder continuation theorem
due to Fitzpatrick, Massab\'{o} and Pejsachowicz \cite{FiMaPe-86}.
However, we point out that for our applications to the study of periodic
points and chaotic$-$like dynamics we rely on the classical
Leray$-$Schauder fundamental theorem.
Another particular feature of our approach consists in a combination of the Poincar\'{e}$-$Miranda
theorem \cite{Ku-97,Ma-00} with the properties of topological surfaces cutting the
arcs between two given sets.
The corresponding details are widely described in
Section \ref{sec-2} and Section \ref{sec-3a}.
In regard to the Poincar\'{e}$-$Miranda theorem, which is an $N-$dimensional version of the
intermediate value theorem (see Theorem \ref{th-2b.1}), we would like to
recall also the recent work \cite{BaCsGa->}, where the authors show its effectiveness
in detecting chaotic dynamics in planar dynamical systems.

\medskip
This article is organized as follows. Section \ref{sec-2} is
devoted to the presentation of some topological lemmas concerning
the relationship between cutting surfaces and zero sets of
continuous real valued functions. Analogous results can be found, often in a more implicit form, in different contexts. Since for our
applications we need a specific version of the statements, we
give an independent proof with all the details. The remainder of the
paper is divided in three parts. In Section \ref{sec-3a} we present a
variant of Hurewicz$-$Wallman theorem (Theorem \ref{th-2b.2}), that (likewise \cite{EiOt-38,HuWa-41}) guarantees a nonempty
intersection among $N$ closed topological surfaces which are in good
position with respect to the faces of an $N-$dimensional hypercube.
Such result is then applied in order to provide an extension of
some recent theorems about fixed points and periodic points for
continuous mappings in euclidean spaces. In particular, we
generalize (with a simplified proof) a theorem by Kampen
\cite{Ka-00} (see Corollary \ref{cor-2b.1}) and obtain also an
extension of a result by Zgliczy\'nski \cite{Zg-01} (see Theorem
\ref{th-2b.z}). In Section \ref{sec-3b} we consider the case in
which, roughly speaking, the number of intersecting surfaces is
smaller than the dimension of the space. After presenting a
general situation in Theorem
\ref{th-3b.2a}, we restrict ourselves to the case of the
intersection of $N-1$ continua embedded in a topological space
homeomorphic to the $N-$dimensional hypercube (see Corollary
\ref{cor-3b.1}). This latter result turns out to be our main
ingredient in Section \ref{sec-3c} for the study of the dynamics of continuous maps which
possess, in a very broad sense, a one$-$dimensional expansive
direction. Indeed, it allows to prove a fixed point theorem (see
Theorem \ref{th-3c.1}) for maps defined on topological rectangles.
The main hypothesis for our fixed point theorem requires that the map $\psi$
stretches (across $X$) the paths joining two disjoint subsets $X_{\ell}$ and $X_{r}$
of the boundary of the topological rectangle $X.$
Formally, such stretching condition is expressed as follows:
there exists a compact set ${\mathcal K}\subseteq X$ on which $\psi$ is
continuous and such that, for any path $\gamma$ connecting in $X$
the two sides $X_{\ell}$ and $X_{r}\,,$ there exists a sub-path $\sigma$ of
$\gamma$ contained in ${\mathcal K},$ whose image through $\psi$ is
contained in $X$ and joins the same two sides of the boundary (as shown in Figure 1).
A special feature of our theorem is that it ensures the existence
of a fixed point for $\psi$ in ${\mathcal K},$ that, in turns, yields to the existence
of multiple fixed points when the stretching condition is satisfied
with respect to a certain number of pairwise disjoint subsets ${\mathcal K}_i$'s
of $X.$ An application of the theorem to the iterates of $\psi$ will
then allow to obtain a set of infinitely many periodic points with a
complex structure. More generally, we define, for a mapping $\psi,$
a concept of stretching between two (possibly different) oriented $N$-dimensional
rectangles $(X,X^-)$ and $(Y,Y^-),$ where the $[\cdot]^-$-sets are the union of two
suitably chosen left and right sides of the boundary (see Definition \ref{def-3c.1} and
Figure 1).

{
\includegraphics[scale=0.45]{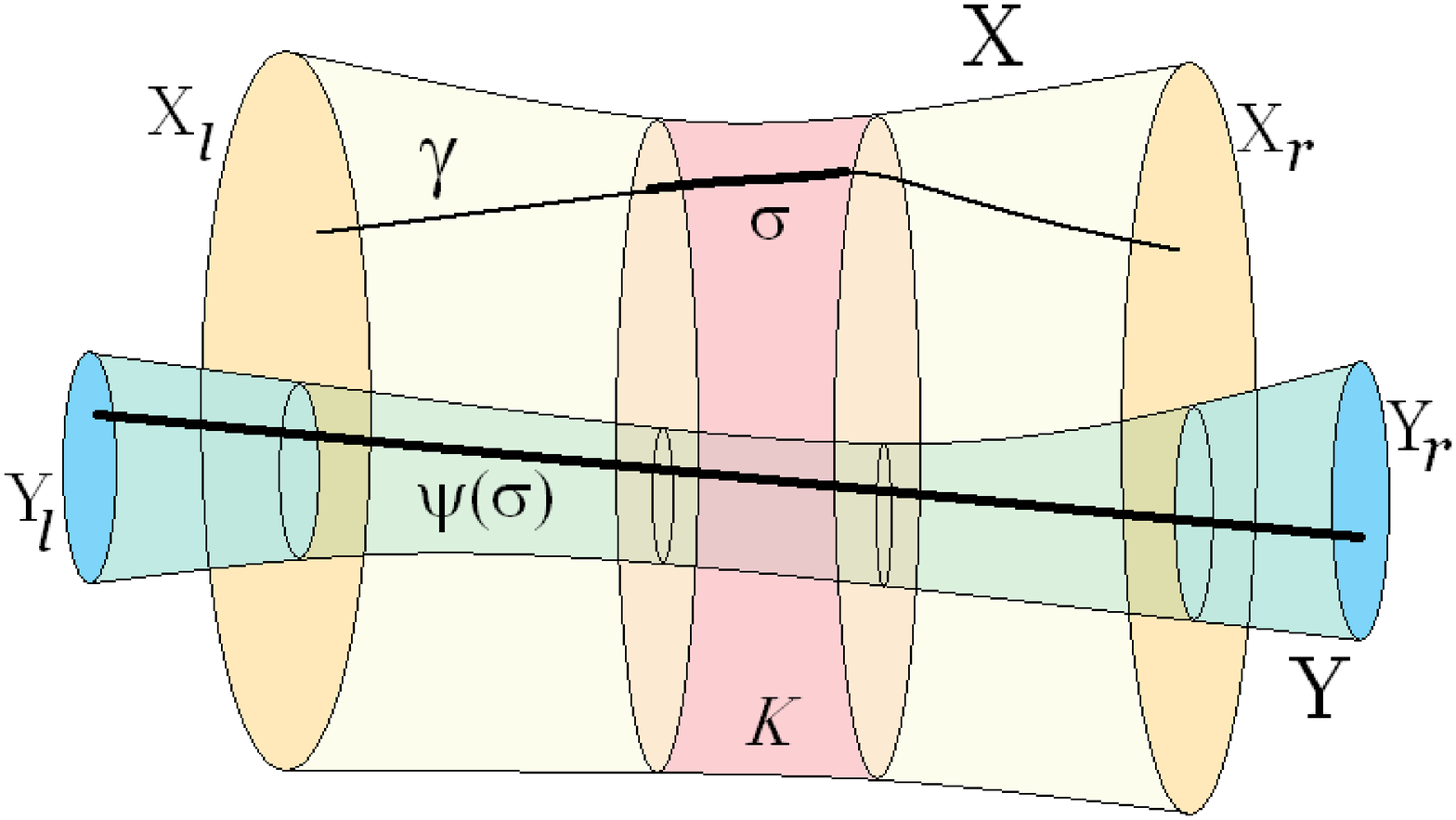}
}
\begin{quote}
{\small{Figure 1. The tubular sets $X$ and $Y$ in the picture
represent two generalized 3$-$dimensional rectangles, in which we
have put in evidence the compact set $\mathcal K$ and the boundary
sets $X_{\ell}$ and $X_{r}$ as well as $Y_{\ell}$ and $Y_{r}$ (see
Definition \ref{def-3c.1} for more details). In this particular
case the map $\psi$ stretches the paths of $X$ not only across $Y,$
but also across $X$ itself and therefore the existence of a fixed
point for $\psi$ inside $\mathcal K$ is ensured by Theorem
\ref{th-3c.1}. Note that, differently from the classical Rothe and Brouwer theorems, we don't require that $\psi(\partial X)\subseteq X.$}}
\end{quote}

In this manner we fully extend to an arbitrary dimension the above recalled
results by Papini and Zanolin \cite{PaZa-02,PaZa-04a,PaZa-04b}.
As a consequence, also all the applications
to ordinary differential equations and dynamical systems contained in those
articles and in related ones
(like \cite{CaDaPa-02,DaPa-04,PaZa-00}) are, in principle, extendable to the higher dimension.
The specific applications, which would require a detailed treatment,
will be presented in a future work.

\medskip

We end this introductory section with a list of basic definitions
and notation. Some further concepts will be introduced later on in
the paper when needed.
\\
As usual we denote by ${\mathbb R},$ ${{\mathbb R}}^+:= [0,+\infty)\,$ and
${{\mathbb R}}^+_{0}:= \,]0,+\infty)\,$ the sets of reals, as well as the
nonnegative and positive real numbers, respectively. The sets of
integers ${\mathbb Z}$ and natural numbers ${\mathbb N} =
\{0,1,2,\dots\}$ are considered as well. For a subset $M$ of a
topological space $Z,$ we denote by $M^{^{^{\!\!\!\!\!{o}}}}$ and
$\overline{M}$ the interior and the closure of $M,$ respectively.
For a metric space $(X,d)$ we indicate with $B(x_0,r)$ the open
ball of center $x_0\in X$ and radius $r > 0.$ Similarly, given
$M\subseteq X$ ($M\not=\emptyset$), we define $B(M,r):= \{x\in X:
\exists w\in M,\; d(x,w) < r\}.$ By a continuum we mean a compact
and connected subset of a metric space (i.e. a metric continuum).

Let $Z$ be a topological space. By a \textit{path} $\gamma$ in $Z$ we mean a
continuous mapping (parameterized curve) $\gamma : {\mathbb R}\supseteq [a,b]\to Z$ and we denote by
${\overline{\gamma}}$ its range, that is
$${\overline{\gamma}}:=\gamma([a,b]).$$
A sub-path $\sigma$ of $\gamma$ is defined as
$$\sigma:= \gamma|_{[c,d]}\,,\quad \mbox{for } \; [c,d]\subseteq [a,b],$$
that is the restriction of $\gamma$ to a closed subinterval of its
domain. If $Z, Y$ are topological spaces and $\psi:Z\supseteq
D_{\psi}\to Y$ is a map which is continuous on a set ${\mathcal
M}\subseteq D_{\psi}\,,$ then for any path $\gamma$ in $Z$ with
$\overline{\gamma}\subseteq {\mathcal M},$ it follows that
$\psi\circ\gamma$ is a path in $Y$ with range equal to
$\psi({\overline{\gamma}}).$ As usual in the theory of (continuous)
parameterized curves, there is no loss of generality in assuming the
paths to be defined on $[0,1].$ In fact, if $\theta_1: [a_1,b_1]\to
Z$ and $\theta_2:[a_2,b_2]\to Z,$ with $a_i<b_i,\, i=1,2,$ are two
paths in $Z,$ we set $\theta_1\sim \theta_2$ if there exists a
homeomorphism $h$ of $[a_1,b_1]$ onto $[a_2,b_2]$ (i.e., a change
of variable in the parameter) such that $\theta_2(h(t)) =
\theta_1(t),\,$ $\forall\, t\in [a_1,b_1].$ It is easy to check that `` $\sim$
'' is an equivalence relation and that if $\theta_1\sim\theta_2\,,$
then the ranges of $\theta_1$ and $\theta_2$
coincide. Hence, for any path $\gamma$
we can find an equivalent path defined on $[0,1].$
\\
If $\gamma_1,\,\gamma_2:[0,1]\to Z$ are two paths in $Z$ with
$\gamma_1(1)=\gamma_2(0),$  we define the \textit{gluing of
$\gamma_1$ with $\gamma_2$}  as the path
$\gamma_1\star\gamma_2:[0,1]\to Z$ such that
\begin{displaymath}
\gamma_1\star\gamma_2(t):=\left\{ \begin{array}{ll}
\gamma_1(2t) & \textrm{for $0\le t\le \frac 1 2 ,$}\\
\gamma_2(2t-1) & \textrm{for $\frac 1 2\le t\le 1.$ }
\end{array} \right.
\end{displaymath}
Moreover, given a path $\gamma:[0,1]\to Z,$ we denote by
$\gamma^-:[0,1]\to Z$ the path having $\overline{\gamma}$ as support,
but run with reverse orientation, i.e. $\gamma^-(t):=\gamma(1-t),$
for all $t\in [0,1].$
\\
At last we recall a known definition. Let $Z$ be a topological
space. We say that $Z$ is {\itshape arcwise connected} if, given
any two different points $p,q\in Z,$ there is a path $\gamma:
[0,1]\to Z$ such that $\gamma(0) = p$ and $\gamma(1) = q.$ In the
case of a Hausdorff topological space $Z,$ the range
${\overline{\gamma}}$ of $\gamma$ turns out to be a locally connected
metric continuum (a Peano space according to \cite{HoYo-61}).
Then, if $Z$ is a metric space, the above definition of arcwise connectedness is equivalent
to the fact that, given any two points $p,q\in Z$ with $p\not= q,$
there exists an {\itshape arc} (that is the homeomorphic image of
a compact interval) contained in $Z$ and having $p$ and $q$ as
extreme points (see, e.g., \cite[pp.115--131]{HoYo-61}).

\section{Topological lemmas}\label{sec-2}

As already mentioned in the Introduction, in this section we present some topological lemmas in order to show the relationship between particular surfaces and zero sets of continuous real valued functions.
First of all, we need a definition.
\begin{definition}\label{def-2.1}
Let $X$ be an arcwise connected metric space. Let $A, B,
C\subseteq X$ be closed and nonempty sets with $A\cap
B=\emptyset.$ We say that \textit{$C$ cuts the arcs between $A$
and $B$} if for any path $\gamma: [0,1]\to X,$
with ${\overline{\gamma}}\cap A\neq\emptyset$ and ${\overline{\gamma}}\cap
B\neq\emptyset,$ it follows that ${\overline{\gamma}}\cap
C\neq\emptyset.$ In the sequel, if $X$ is a subspace of a larger
metric space $Z$ and we wish to stress the fact that we consider
only paths contained in $X,$ we make more precise our definition
by saying that \textit{$C$ cuts the arcs between $A$ and $B$ in $X$ }.
\end{definition}

\noindent
Such definition is a modification of the classical one regarding the cutting
of a space between two points in \cite{Ku-68}.
See also \cite{BeDiPe-02} for a more general concept in which
the authors consider a set $C$ which
intersects every connected set meeting two nonempty sets $A$ and $B.$
In the case in which $A$ and $B$ are the opposite faces of an $N-$dimensional cube,
J. Kampen \cite[p.512]{Ka-00} says that $C$ separates $A$ and $B.$ We prefer to use
the ``cutting'' terminology in order to avoid misunderstanding with other definitions
of separation which are more common in Topology.
In particular (see \cite{FeLeSh-99}), we remark that
our definition agrees with the usual one of cut
when $A,B,C$ are pairwise disjoint.

In the sequel, even when not explicitly mentioned, we assume that
the basic space $X$ is arcwise connected. In some of the next results
the local arcwise connectedness will be required too. With this respect, we
recall that
any connected and locally arcwise connected metric space is arcwise connected (see \cite[Th.2, p.253]{Ku-68}).

\begin{lemma}\label{lem-ABC.1}
Let $X$ be a connected and locally arcwise connected metric space and let
$A, B, C\subseteq X$ be closed and nonempty sets with $A\cap
B=\emptyset.$ Then $C$ cuts the arcs between $A$ and $B$ if and
only if there exists a continuous function $f: X\to {\mathbb R}$ such that
\begin{equation}\label{eq-AB.1}
f(x)\leq 0,\, \forall\, x\in A,\qquad f(x) \geq 0,\, \forall\, x\in B
\end{equation}
and
\begin{equation}\label{eq-C.1}
C = \{x\in X: f(x) = 0\}.
\end{equation}
\end{lemma}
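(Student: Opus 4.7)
The plan is to prove the two implications separately. The easy direction is sufficiency: if such an $f$ exists and $\gamma:[0,1]\to X$ is a path with $\gamma(t_0)\in A$ and $\gamma(t_1)\in B$, then $f(\gamma(t_0))\le 0\le f(\gamma(t_1))$, so by the intermediate value theorem applied to the real-valued continuous function $f\circ\gamma$ there is $t^*$ between $t_0$ and $t_1$ with $f(\gamma(t^*))=0$, whence $\gamma(t^*)\in C$. This step is essentially just Bolzano's theorem and should take only a few lines.

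For the converse, the idea is to build $f$ explicitly from the distance function $d(\,\cdot\,,C)$, multiplied by a sign that is determined component-wise on $X\setminus C$. First I would exploit that $X\setminus C$ is open and, by local arcwise connectedness, its connected components are themselves open and arcwise connected. I would then split these components into three classes according to whether they meet $A$, meet $B$, or neither, and let $U_A$, $U_B$, $U_0$ denote the corresponding unions. The cutting hypothesis is used exactly once, to show the classes meeting $A$ and meeting $B$ are disjoint: a component hitting both $A$ and $B$ would contain, by arcwise connectedness, a path from $A$ to $B$ avoiding $C$, contradicting the assumption.

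With this decomposition in hand I would define
\[
f(x)=\begin{cases} -d(x,C) & x\in U_A,\\ \phantom{-}d(x,C) & x\in U_B\cup U_0,\\ \phantom{-}0 & x\in C,\end{cases}
\]
and verify the three required properties. Continuity is automatic on the open set $X\setminus C$, since $f$ coincides with a continuous function on each of the two open pieces $U_A$ and $U_B\cup U_0$; at a point $x\in C$ any approximating sequence $x_n\to x$ satisfies $|f(x_n)|\le d(x_n,C)\to 0=f(x)$, giving continuity there. The inclusions $A\setminus C\subseteq U_A$ and $B\setminus C\subseteq U_B$ (and the fact that $f$ vanishes on $C$) immediately yield $f\le 0$ on $A$ and $f\ge 0$ on $B$, and the equality $\{f=0\}=C$ is built in, since $d(x,C)>0$ whenever $x\notin C$.

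The main obstacle I expect is the component analysis: one has to be careful to use local arcwise connectedness at the right moment (to ensure each component of the open set $X\setminus C$ is arcwise connected, thereby upgrading a hypothetical $A/B$-meeting component into an actual path violating the cutting property) and to note that $C$ may non-trivially intersect $A$ or $B$, so that one cannot hope to work with $A$ and $B$ as wholly contained in the "$U_A$-side" and "$U_B$-side". Once this is properly handled, the gluing of the two signed pieces via the function $d(\,\cdot\,,C)$ is routine.
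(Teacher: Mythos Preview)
Your proof is correct and follows essentially the same route as the paper: both build $f$ as $\pm d(x,C)$ with the sign determined by whether $x$ can be joined to $A$ inside $X\setminus C$, and both use local arcwise connectedness to show this sign is locally constant on $X\setminus C$. Your phrasing in terms of the (open, arcwise connected) components of $X\setminus C$ is just a slightly more structural way of packaging the same argument the paper carries out pointwise via its function~$\mu$.
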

\begin{proof}
Assume there exists a continuous function $f: X\to {\mathbb R}$ satisfying
\eqref{eq-AB.1} and \eqref{eq-C.1}. Let $\gamma: [0,1]\to X$ be a continuous path
such that $\gamma(0)\in A$ and $\gamma(1)\in B.$ We want to prove that
${\overline{\gamma}}\cap C\neq\emptyset.$ Indeed, for
the composite continuous function
$\theta:=f\circ\gamma: [0,1]\to {\mathbb R},$ we have that $\theta(0)\leq 0 \leq \theta(1)$ and therefore
Bolzano theorem ensures the existence of $t^*\in [0,1]$ with $\theta(t^*)=0.$ This means that
$\gamma(t^*)\in C$ and therefore ${\overline{\gamma}}\cap C\neq\emptyset.$
Thus we have proved that $C$ cuts the arcs between $A$ and $B.$

Conversely, let us assume that $C$ cuts the arcs between $A$ and $B.$
We define the following auxiliary functions
\begin{equation*}
\rho: X\to {{\mathbb R}}^+\,,
\end{equation*}
\begin{equation}\label{eq-f.1}
\rho(x):= \mbox{dist}(x,C),\;\forall\, x\in X
\end{equation}
and
\begin{equation*}
\mu: X\to \{-1,0,1\},
\end{equation*}
\begin{equation}\label{eq-f.2}
\mu(x):=
\quad
\left
\{
\begin{array}{ll}
~\,0&\; \mbox{ if } \, x\in C,\\
\!-1 &\; \mbox{ if } \, x\not\in C \;
\mbox{and there exists a path $\gamma_x:[0,1]\to X\setminus C$} \\
~\quad &\mbox{
\,such that $\gamma_x(0)\in A$ and $\gamma_x(1) = x,$}\\
~\,1&\; \mbox{ elsewhere.}\\
\end{array}
\right.
\end{equation}
Observe that $\rho$ is a continuous function with $\rho(x) = 0$ if and only if $x\in C$
and also $\mu(x) = 0$ if and only if $x\in C.$ Moreover,  $\mu$ is bounded.

Let $x_0\not\in C.$ We claim that $\mu$ is continuous in $x_0\,.$ Actually, $\mu$
is locally constant on $X\setminus C.$ Indeed, since $x_0\in X\setminus C$ (an open set) and $X$
is locally arcwise connected, there is a neighborhood $U_{x_0}$ of $x_0$ with
$U_{x_0}\subseteq X\setminus C,$ such that for each $x\in U_{x_0}$ there exists a path $\sigma_{x_0,x}$
joining $x_0$ to $x$ in $U_{x_0}\,.$ Clearly, if there is a path $\gamma_{a,x_0}$ in $X\setminus C$
joining some point $a\in A$ with $x_0\,,$ then the path $\gamma_{a,x_0}\star\sigma_{x_0,x}$
connects $a$ to $x$ in $X\setminus C.$ This proves that if $\mu(x_0) = -1,$ then $\mu(x) = -1$
for every $x\in U_{x_0}\,.$ On the other hand, if there is a path $\gamma_{a,x}$ in
$X\setminus C$
which connects some point $a\in A$ to $x\in U_{x_0}\,,$ then, the path
$\gamma_{a,x}\star\sigma^-_{x_0,x}$ connects $a$ to $x_0\,$ in $X\setminus C.$
This shows that if $\mu(x_0) = 1$ (that is, it is not possible to connect $x_0$ to any point
of $A$ in $X\setminus C$ using a path), then $\mu(x) = 1$ for every $x\in U_{x_0}$
(that is, it is not possible to connect any point $x\in U_{x_0}$ to any point
of $A$ in $X\setminus C$ using a path).
We can now define
\begin{equation}\label{eq-f.3}
f: X\to {\mathbb R}, \qquad f(x):= \rho(x) \mu(x).
\end{equation}
Clearly, $f(x)= 0$ if and only if $x\in C$ and, moreover, $f$ is continuous.
Indeed, if $x_0\not \in C,$ we have that $f$ is continuous at $x_0$ because both $\rho$ and $\mu$ are
continuous in $x_0\,.$ If $x_0\in C$ and $x_n\to x_0$ (as $n\to \infty$), then $\rho(x_n)\to 0$
and $|\mu(x_n)|\leq 1,$ so that $f(x_n)\to 0 = f(x_0).$
Finally, by the definition of $\mu$ in \eqref{eq-f.2}, it holds that $\mu(a) = -1,$ for every $a\in A\setminus C$ and therefore, for such an $a$, it holds that $f(a) < 0.$ On the contrary, if we suppose that $b\in B\setminus C,$ we must have $\mu(b) =1.$ In fact, by the cutting condition,
there is no path connecting in $X\setminus C$ the point $b$ to any point of $A.$ Therefore, in this case we have $f(b) > 0.$
The proof is complete.
\end{proof}

\noindent Considering the functions $\mu$ and $f$ as in
\eqref{eq-f.2}, \eqref{eq-f.3}, we see that our definition,
although adequate from the point of view of the proof of Lemma
\ref{lem-ABC.1}, perhaps does not represent
an optimal choice. For instance, we would like the sign
of $f$ to coincide for all the points located ``at the same side''
of $A$ (respectively of $B$) with respect to $C.$ Having this
request in mind, we propose a different definition for the
function $\mu$ (see \eqref{eq-f.2b}). First of all, we introduce
the following sets that we call \textit{the side of $A$ in $X$}
and \textit{the side of $B$ in $X$}, respectively.
\begin{equation*}
{\mathcal S}(A):=\{x\in X: \, {\overline{\gamma}}\cap A\not=\emptyset,\,\forall\, \mbox{path }\,\gamma: [0,1]\to X,
\mbox{with } \gamma(0) = x, \gamma(1)\in B\},
\end{equation*}
\begin{equation*}
{\mathcal S}(B):=\{x\in X: \, {\overline{\gamma}}\cap B\not=\emptyset,\,\forall\, \mbox{path }\,\gamma: [0,1]\to X,
\mbox{with } \gamma(0) = x, \gamma(1)\in A\},
\end{equation*}
that is, a point $x$ belongs to the side of $A$ (resp. to the side of $B$)
if whenever we try to connect $x$ to $B$ (resp. to $A$) by a path,
we first meet $A$ (resp., we first meet $B$). By definition, it follows that
\begin{equation*}
A\subseteq {\mathcal S}(A),\quad B\subseteq {\mathcal S}(B).
\end{equation*}

\begin{lemma}\label{lem-ABC.3}
Let $X$ be a connected and locally arcwise connected metric space and let
$A, B\subseteq X$ be closed and nonempty sets with $A\cap
B=\emptyset.$ Then ${\mathcal S}(A)$ and ${\mathcal S}(B)$ are closed and, moreover,
\begin{equation*}
{\mathcal S}(A)\cap {\mathcal S}(B) = \emptyset.
\end{equation*}
\end{lemma}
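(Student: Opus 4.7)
The plan is to treat the three claims (closedness of $\mathcal{S}(A)$, closedness of $\mathcal{S}(B)$, and the disjointness $\mathcal{S}(A)\cap\mathcal{S}(B)=\emptyset$) separately, using only the defining properties of the sides together with the hypothesis that $X$ is connected and locally arcwise connected (so, by \cite[Th.2, p.253]{Ku-68}, arcwise connected).

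For closedness of $\mathcal{S}(A)$, I would argue by contradiction. Suppose $x_n\to x$ with $x_n\in\mathcal{S}(A)$, but $x\notin\mathcal{S}(A)$. Then there exists a path $\gamma:[0,1]\to X$ with $\gamma(0)=x$, $\gamma(1)\in B$ and $\overline{\gamma}\cap A=\emptyset$. In particular $x\notin A$. Since $A$ is closed and $X$ is locally arcwise connected, pick an arcwise connected neighborhood $U$ of $x$ with $U\cap A=\emptyset$. For $n$ large enough $x_n\in U$, so there is a path $\alpha_n:[0,1]\to U$ from $x_n$ to $x$ with $\overline{\alpha_n}\cap A=\emptyset$. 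The gluing $\alpha_n\star\gamma$ is then a path from $x_n$ to a point of $B$ whose range avoids $A$, contradicting $x_n\in\mathcal{S}(A)$. The symmetric argument, exchanging the roles of $A$ and $B$, shows that $\mathcal{S}(B)$ is closed.

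For the disjointness I would proceed again by contradiction, suppose $x\in\mathcal{S}(A)\cap\mathcal{S}(B)$. First I would observe that $x\notin A\cup B$: if, say, $x\in A$, the constant path $\gamma\equiv x$ has $\gamma(0)=x$ and $\gamma(1)\in A$, and since $A\cap B=\emptyset$ its range misses $B$, contradicting $x\in\mathcal{S}(B)$; the case $x\in B$ is analogous. Next, using that $X$ is arcwise connected and $B\neq\emptyset$, pick a path $\gamma:[0,1]\to X$ with $\gamma(0)=x$ and $\gamma(1)\in B$. Since $A\cup B$ is closed, $\gamma(0)\notin A\cup B$ and $\gamma(1)\in A\cup B$, the number
\begin{equation*}
s^{*}:=\inf\{t\in [0,1]:\gamma(t)\in A\cup B\}
\end{equation*}
satisfies $0<s^{*}\le 1$, $\gamma(s^{*})\in A\cup B$ and $\gamma([0,s^{*}))\cap(A\cup B)=\emptyset$. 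Now I would split into two cases. If $\gamma(s^{*})\in A$, the restriction $\gamma|_{[0,s^{*}]}$ (after an obvious reparameterization to $[0,1]$) is a path from $x$ to a point of $A$ whose range avoids $B$, contradicting $x\in\mathcal{S}(B)$. If instead $\gamma(s^{*})\in B$, the same restriction is a path from $x$ to a point of $B$ whose range avoids $A$, contradicting $x\in\mathcal{S}(A)$. Either way we reach a contradiction, so $\mathcal{S}(A)\cap\mathcal{S}(B)=\emptyset$.

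The main subtlety I anticipate is the disjointness step: one has to realize that the defining condition on the two sides is automatically violated at the first time a path starting outside $A\cup B$ hits it, and that the preliminary observation $x\notin A\cup B$ (which uses $A\cap B=\emptyset$ through the constant-path argument) is what legitimizes looking at $s^{*}>0$. The closedness part, by contrast, is a routine path-gluing argument made possible by the local arcwise connectedness of $X$.
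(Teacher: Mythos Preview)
Your proof is correct. The closedness argument is essentially identical to the paper's: both show that the complement of $\mathcal{S}(A)$ is open by picking a ``witness'' path $\gamma$ from $x\notin\mathcal{S}(A)$ to $B$ avoiding $A$, choosing an arcwise connected neighborhood $U\subseteq X\setminus A$ of $x$, and gluing a short path in $U$ onto $\gamma$.

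Your disjointness argument, however, is genuinely different from the paper's and in fact more direct. The paper proceeds by an inductive construction: starting from a path $\gamma$ joining $x$ to some $b\in B$, it produces a strictly decreasing sequence $t_1>s_1>t_2>s_2>\cdots$ in $[0,1]$ with $\gamma(t_i)\in A$ and $\gamma(s_i)\in B$, and then argues that the common limit $t^*$ satisfies $\gamma(t^*)\in A\cap B$, a contradiction. Your ``first hitting time'' approach bypasses this entirely: you look at the single instant $s^*$ when $\gamma$ first meets $A\cup B$, and observe that $\gamma|_{[0,s^*]}$ immediately contradicts membership in one of the two sides, depending on whether $\gamma(s^*)\in A$ or $\gamma(s^*)\in B$. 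This avoids the induction and the limiting step altogether; the price is that one must explicitly use closedness of $A\cup B$ to know $\gamma(s^*)\in A\cup B$, but that is immediate. Both approaches hinge on the preliminary observation $x\notin A\cup B$ (equivalently $A\cap\mathcal{S}(B)=\emptyset$ and $B\cap\mathcal{S}(A)=\emptyset$), which you justify cleanly via constant paths.
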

\begin{proof}
First of all, we prove that ${\mathcal S}(A)$ is closed by
checking that if $w\not\in {\mathcal S}(A)$ then there is a
neighborhood $U_w$ of $w$ with $U_w\subseteq X\setminus {\mathcal
S}(A).$ Indeed, if $w\not\in {\mathcal S}(A),$ there exists a path
$\gamma:[0,1]\to X$ with $\gamma(0) = w$ and $\gamma(1)=b\in B$
and such that $\gamma(t)\not\in A,$ for every $t\in [0,1].$ Since
$A$ is a closed set and $X$ is locally arcwise connected, there
exists an arcwise connected open set $V_w$ with $w\in V_w\subseteq
X\setminus A.$ Hence, for every $x\in V_w,$ there is a path
$\sigma_x$ connecting $x$ to $w$ in $V_w\,.$ As a consequence, we
find that the path $\gamma_x:=\sigma_x\star\gamma$ connects $x\in
V_w$ to $b\in B$ with $\overline{\gamma_x}\subseteq X\setminus A.$
Clearly, the open neighborhood $U_w:= V_w$ satisfies our
requirement and this proves that $X\setminus {\mathcal S}(A)$ is
open. The same argument ensures that ${\mathcal S}(B)$ is closed.

It remains to show that ${\mathcal S}(A)$ and ${\mathcal S}(B)$
are disjoint. Since $A\cap B =\emptyset,$
it follows immediately from the definition that
$$A\cap {\mathcal S}(B)= \emptyset,\quad
B\cap {\mathcal S}(A)= \emptyset.$$
Now assume, by contradiction, that there exists
$$x\in {\mathcal S}(A) \cap {\mathcal S}(B),$$
with $x\not\in A\cup B.$
Let $\gamma: [0,1]\to X$ be a path such that $\gamma(0) = x$
and $\gamma(1) = b\in B$ (we know that there exists a path like that
because $X$ is arcwise connected). The fact that $x\in {\mathcal S}(A)\setminus A$
implies that there is $t_1\in \, ]0,1[\,$ such that $\gamma(t_1) = a_1\in A.$
On the other hand, since
$x\in {\mathcal S}(B)\setminus B,$
there exists $s_1\in \, ]0,t_1[\,$ such that $\gamma(s_1) = b_1\in B.$
Proceeding by induction and using repeatedly the definition of ${\mathcal S}(A)$
and ${\mathcal S}(B)$ we obtain a sequence
$$t_1 > s_1 > t_2 > \dots > t_{j} > s_{j} > t_{j+1} > \dots > 0$$
with $\gamma(t_i) = a_i \in A$ and $\gamma(s_i) = b_i\in B.$
For $t^*= \inf t_n = \inf s_n \in [0,1[\,,$ we have that
$\gamma(t^*) = \lim a_n = \lim b_n \in A\cap B,$ a contradiction.
The proof is complete.
\end{proof}

\begin{lemma}\label{lem-ABC.4}
Let $X$ be a connected and locally arcwise connected metric space and let
$A, B, C\subseteq X$ be closed and nonempty sets with $A\cap
B=\emptyset.$ Then $C$ cuts the arcs between $A$ and $B$ if and
only if $C$ cuts the arcs between ${\mathcal S}(A)$ and ${\mathcal S}(B).$
\end{lemma}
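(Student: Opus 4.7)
The $(\Leftarrow)$ direction is immediate, since $A\subseteq{\mathcal S}(A)$ and $B\subseteq{\mathcal S}(B)$, so every path joining $A$ to $B$ is a fortiori a path joining ${\mathcal S}(A)$ to ${\mathcal S}(B)$. I concentrate on the nontrivial implication $(\Rightarrow)$.

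Fix a path $\gamma:[0,1]\to X$ with $\gamma(0)=x\in{\mathcal S}(A)$ and $\gamma(1)=y\in{\mathcal S}(B)$; I want to show $\overline{\gamma}\cap C\neq\emptyset$. The strategy is to first prove that $\overline{\gamma}$ meets both $A$ and $B$: once this is known, choosing $t_A,t_B\in[0,1]$ with $\gamma(t_A)\in A$, $\gamma(t_B)\in B$ and (up to swapping $A$ and $B$) $t_A\leq t_B$, the sub-path $\gamma|_{[t_A,t_B]}$ joins $A$ to $B$, so by the cutting hypothesis it meets $C$, and hence so does $\overline{\gamma}$.

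The main step is proving $\overline{\gamma}\cap A\neq\emptyset$. I argue by contradiction, assuming $\overline{\gamma}\cap A=\emptyset$. First I observe that also $\overline{\gamma}\cap B=\emptyset$: indeed, if $\gamma(t^*)\in B$ for some $t^*$, then $\gamma|_{[0,t^*]}$ would be a path from $x\in{\mathcal S}(A)$ into $B$ avoiding $A$, contradicting the definition of ${\mathcal S}(A)$. Next I show that $x\in{\mathcal S}(B)$: given an arbitrary path $\alpha:[0,1]\to X$ with $\alpha(0)=x$ and $\alpha(1)\in A$, the concatenation $\gamma^-\star\alpha$ is a path from $y\in{\mathcal S}(B)$ to a point of $A$, which by the definition of ${\mathcal S}(B)$ must meet $B$; but $\overline{\gamma}$ misses $B$, so the intersection lies on $\alpha$. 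As $\alpha$ was arbitrary, $x\in{\mathcal S}(B)$, contradicting the disjointness part of Lemma \ref{lem-ABC.3}. Hence $\overline{\gamma}\cap A\neq\emptyset$, and the analogous conclusion $\overline{\gamma}\cap B\neq\emptyset$ follows by applying the defining property of ${\mathcal S}(B)$ to the reversal of the tail of $\gamma$ starting at the first hit of $A$.

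The only delicate point is spotting how to introduce the auxiliary path $\alpha$ so as to invoke the disjointness of ${\mathcal S}(A)$ and ${\mathcal S}(B)$; everything else reduces to routine sub-path extraction and the cutting property between the original sets $A$ and $B$.
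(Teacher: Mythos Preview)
Your proof is correct, and it takes a genuinely different route from the paper's. The paper does not invoke Lemma~\ref{lem-ABC.3} at all: assuming a path $\gamma$ from ${\mathcal S}(A)\setminus A$ to ${\mathcal S}(B)\setminus B$ avoids $C$, the authors prepend an auxiliary path from a point of $A$ to $\gamma(0)$ and then repeatedly apply the defining properties of ${\mathcal S}(A)$ and ${\mathcal S}(B)$ to produce an infinite alternating sequence of parameter values whose images lie in $A$ and $B$ respectively; the limit parameter yields a point of $A\cap B$, a contradiction. In other words, they essentially rerun the infinite--descent argument of Lemma~\ref{lem-ABC.3} inside this proof. Your approach is more modular: you first reduce to showing that $\overline{\gamma}$ meets both $A$ and $B$ (after which the original cutting hypothesis finishes immediately), and for that reduction you cleanly invoke the disjointness ${\mathcal S}(A)\cap{\mathcal S}(B)=\emptyset$ already established in Lemma~\ref{lem-ABC.3}. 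This is shorter and conceptually tidier; the paper's version has the minor advantage of being self--contained, not depending on the previous lemma. One small remark: once you have shown $\overline{\gamma}\cap A\neq\emptyset$ and $\overline{\gamma}\cap B\neq\emptyset$, you do not even need to extract a sub--path, since Definition~\ref{def-2.1} only requires the \emph{range} of the path to meet $A$ and $B$.
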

\begin{proof}
One direction of the inference is obvious. In fact, every path joining $A$ with $B$
is also a path joining ${\mathcal S}(A)$ with ${\mathcal S}(B).$ Thus, if
$C$ cuts the arcs between ${\mathcal S}(A)$ and ${\mathcal S}(B),$
then it also cuts the arcs between $A$ and $B.$
Conversely, let us assume that $C$ cuts the arcs between $A$ and $B.$
We want to prove that $C$ cuts the arcs between ${\mathcal S}(A)$ and ${\mathcal S}(B).$
By the definition of ${\mathcal S}(A)$ and ${\mathcal S}(B),$ it is
straightforward to check that
$C$ cuts the arcs between $A$ and ${\mathcal S}(B)$ as well as
it cuts the arcs between ${\mathcal S}(A)$ and $B.$
Suppose now, by contradiction, that there exists a path
$$\gamma: [\tfrac 1 2,1] \to X\setminus C$$
such that
$\gamma(\tfrac 1 2) = a\in {\mathcal S}(A)\setminus A$ and
$\gamma(1) = b\in {\mathcal S}(B)\setminus B.$
We choose a point $a_0 \in A$ and connect it to $a\in {\mathcal S}(A)$ by a path
$\sigma: [0,\tfrac 1 2] \to X$ with $\sigma(0) = a_0$ and $\sigma(\tfrac 1 2) = a.$
We define also the new path $\gamma_0:= \sigma\star\gamma : [0,1]\to X,$
with $\gamma_0(0) = a_0\in A$ and $\gamma_0(1) = b\in {\mathcal S}(B).$
By the definition of ${\mathcal S}(B)$ we know that there exists $s_1\in \, ]0,1[\,$
such that $\gamma_0(s_1) = b_1\in B.$ Note that $b_1\not\in {\mathcal S}(A)$
(recall that $B\cap {\mathcal S}(A) = \emptyset$) and also $0 < s_1 < \tfrac 1 2$
(in fact if $\tfrac 1 2 < s_1 < 1,$ then, recalling that $\gamma_0(\tfrac 1 2) =
\gamma(\tfrac 1 2) = a \in {\mathcal S}(A)$ and $\gamma_0(s_1) =
\gamma(s_1) = b_1 \in B,$ there must be a ${\tilde{t}}\in [\tfrac 1 2, s_1]$
such that $\gamma({\tilde{t}}) \in C,$ a contradiction to the assumption on $\gamma$).
The restriction of the path
$\gamma_0$ to the interval $[s_1, \tfrac 1 2],$
defines a path joining $b_1\in B$ to $a\in {\mathcal S}(A).$ Therefore there exists
$t_1\in \,]s_1,\tfrac 1 2[\,$ such that
$\gamma_0(t_1) = a_1\in A.$ The restriction of the path
$\gamma_0$ to the interval $[t_1, 1],$
defines a path joining $a_1\in A$ to $b\in {\mathcal S}(B).$ Hence there exists
$s_2\in \,]t_1, 1[\,$ with
$\gamma_0(s_2) = b_2\in B.$ As above, we can also observe that
$b_2\not\in {\mathcal S}(A)$ and  $t_1 < s_2 < \tfrac 1 2.$
Proceeding by induction, we obtain a sequence
$$s_1 < t_1 < s_2 < \dots < s_{j} < t_{j} < s_{j+1} < \frac 1 2$$
with $\gamma(t_i) = a_i \in A$ and $\gamma(s_i) = b_i\in B.$
For $t^*= \sup t_n = \sup s_n \in \,]0,\tfrac 1 2],$ we have that
$\gamma_0(t^*)= \sigma(t^*)= \lim a_n = \lim b_n \in A\cap B,$ a contradiction.
The proof is complete.
\end{proof}

\begin{lemma}\label{lem-ABC.5}
Let $X$ be a connected and locally arcwise connected metric space and let
$A, B, C\subseteq X$ be closed and nonempty sets with $A\cap
B=\emptyset.$ Then $C$ cuts the arcs between $A$ and $B$ if and
only if there exists a continuous function $f: X\to {\mathbb R}$ such that
\begin{equation}\label{eq-AB.2}
f(x)\leq 0,\, \forall\, x\in {\mathcal S}(A),\qquad f(x) \geq 0,\, \forall\, x\in {\mathcal S}(B)
\end{equation}
and
\begin{equation}\label{eq-C.2}
C = \{x\in X: f(x) = 0\}.
\end{equation}
\end{lemma}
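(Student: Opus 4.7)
The plan is to deduce this lemma as a direct consequence of the three preceding results. The statement is a strengthening of Lemma \ref{lem-ABC.1}, in which the sign inequalities $f \leq 0$ and $f \geq 0$ are extended from the sets $A$ and $B$ to the larger ``side'' sets $\mathcal{S}(A)$ and $\mathcal{S}(B)$. The key observation is that, by Lemmas \ref{lem-ABC.3} and \ref{lem-ABC.4}, the pair $(\mathcal{S}(A),\mathcal{S}(B))$ enjoys all the hypotheses needed to invoke Lemma \ref{lem-ABC.1} in place of $(A,B)$.

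For the trivial direction, I would start from the assumption that a continuous $f:X\to\mathbb{R}$ satisfies \eqref{eq-AB.2} and \eqref{eq-C.2}. Since by construction $A\subseteq\mathcal{S}(A)$ and $B\subseteq\mathcal{S}(B)$, the inequalities in \eqref{eq-AB.2} in particular yield $f\leq 0$ on $A$ and $f\geq 0$ on $B$. Together with \eqref{eq-C.2}, these are exactly the hypotheses \eqref{eq-AB.1}--\eqref{eq-C.1} of Lemma \ref{lem-ABC.1}, whose conclusion gives that $C$ cuts the arcs between $A$ and $B$.

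For the non-trivial direction, I would assume that $C$ cuts the arcs between $A$ and $B$. The plan is then: first, apply Lemma \ref{lem-ABC.4} to upgrade the hypothesis to the statement that $C$ cuts the arcs between $\mathcal{S}(A)$ and $\mathcal{S}(B)$; second, invoke Lemma \ref{lem-ABC.3} to know that $\mathcal{S}(A)$ and $\mathcal{S}(B)$ are closed and disjoint (they are also nonempty, since they contain $A$ and $B$ respectively); third, apply Lemma \ref{lem-ABC.1} with $(\mathcal{S}(A),\mathcal{S}(B),C)$ in place of $(A,B,C)$. This produces the desired continuous function $f:X\to\mathbb{R}$ satisfying \eqref{eq-AB.2} and \eqref{eq-C.2}.

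There is essentially no obstacle to the argument: once Lemmas \ref{lem-ABC.1}, \ref{lem-ABC.3} and \ref{lem-ABC.4} are in hand, the proof is a formal chaining of their statements. The only point to double-check is that Lemma \ref{lem-ABC.1} may legitimately be applied to the triple $(\mathcal{S}(A),\mathcal{S}(B),C)$, which is ensured precisely by Lemma \ref{lem-ABC.3} (closedness and disjointness) together with the nonemptiness inherited from $A$ and $B$.
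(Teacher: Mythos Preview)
Your proposal is correct and follows essentially the same route as the paper: reduce the easy direction to Lemma~\ref{lem-ABC.1} via $A\subseteq\mathcal S(A)$, $B\subseteq\mathcal S(B)$, and for the converse use Lemma~\ref{lem-ABC.4} to upgrade the cutting hypothesis and then apply Lemma~\ref{lem-ABC.1} to the triple $(\mathcal S(A),\mathcal S(B),C)$. The paper additionally records the explicit formula \eqref{eq-f.2b} for the resulting $\mu$, and you are a bit more explicit than the paper in invoking Lemma~\ref{lem-ABC.3} to justify that this triple satisfies the hypotheses of Lemma~\ref{lem-ABC.1}, but the argument is the same.
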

\begin{proof}
Clearly, if there exists a continuous function $f: X\to {\mathbb R}$ satisfying \eqref{eq-AB.2}
and \eqref{eq-C.2}, then \eqref{eq-AB.1} and \eqref{eq-C.1} hold too. Hence Lemma \ref{lem-ABC.1}
implies that $C$ cuts the arcs between $A$ and $B.$

Conversely, if $C$ cuts the arcs between $A$ and $B,$ then,
by Lemma \ref{lem-ABC.4},
$C$ cuts the arcs between ${\mathcal S}(A)$ and ${\mathcal S}(B)$ as well.
Therefore we can apply Lemma \ref{lem-ABC.1} with respect to the triple
$({\mathcal S}(A),{\mathcal S}(B),C).$ In particular, the function
$f$ will be defined as in \eqref{eq-f.3},
with $\rho$ like in \eqref{eq-f.1} and $\mu: X \to \{-1,0,1\}$ as follows:
\begin{equation}\label{eq-f.2b}
\mu(x):=
\quad
\left
\{
\begin{array}{ll}
~\,0&\; \mbox{ if } \, x\in C,\\
\!-1 &\; \mbox{ if } \, x\not\in C \;
\mbox{and there exists a path $\gamma_x:[0,1]\to X\setminus C$} \\
~\quad &\mbox{
\,such that $\gamma_x(0)\in {\mathcal S}(A)$ and $\gamma_x(1) = x,$}\\
~\,1&\; \mbox{ elsewhere.}\\
\end{array}
\right.
\end{equation}
\end{proof}

{
\includegraphics[width=3in,height=2.9in]{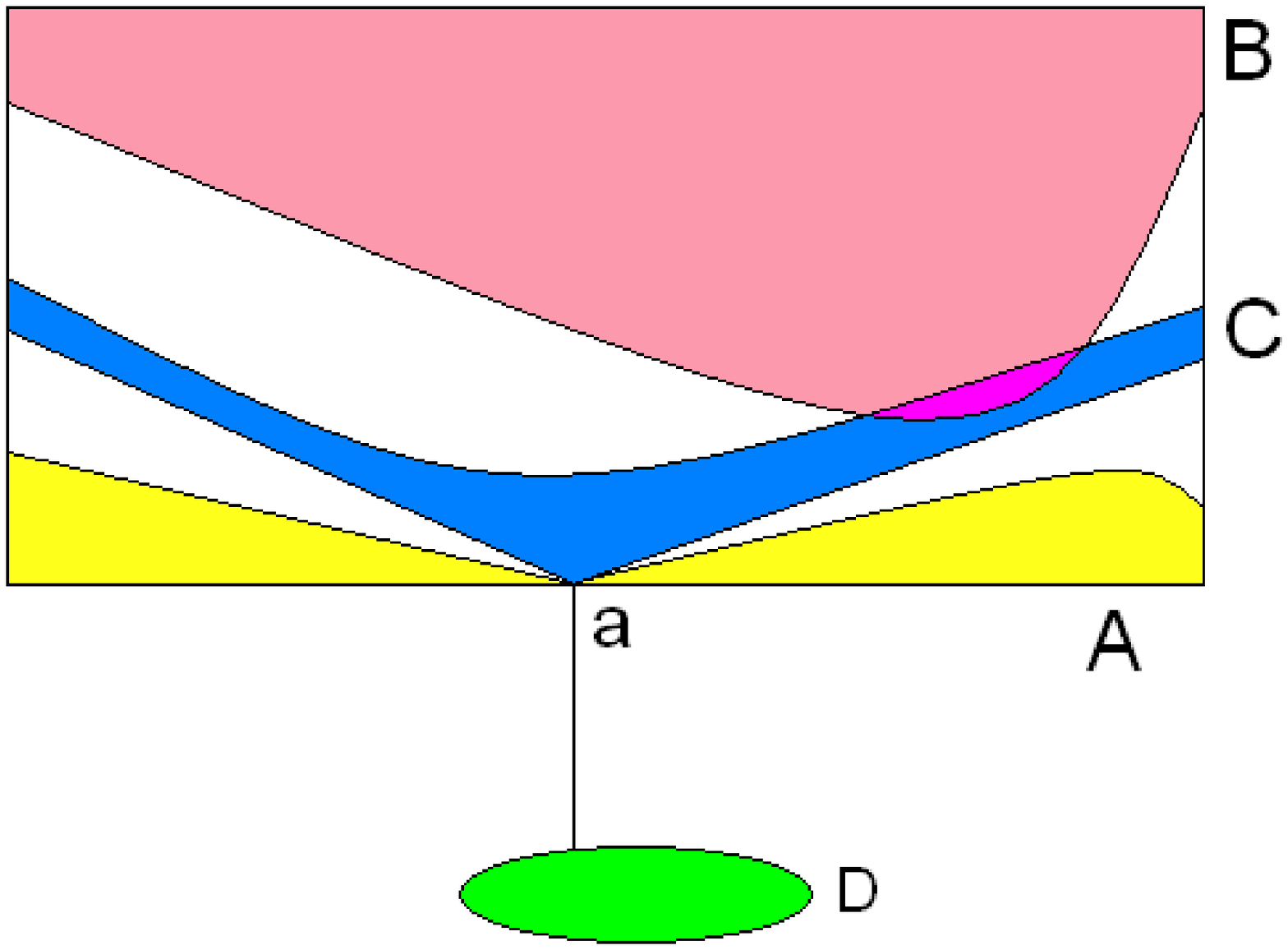}
\includegraphics[width=3in,height=2.9in]{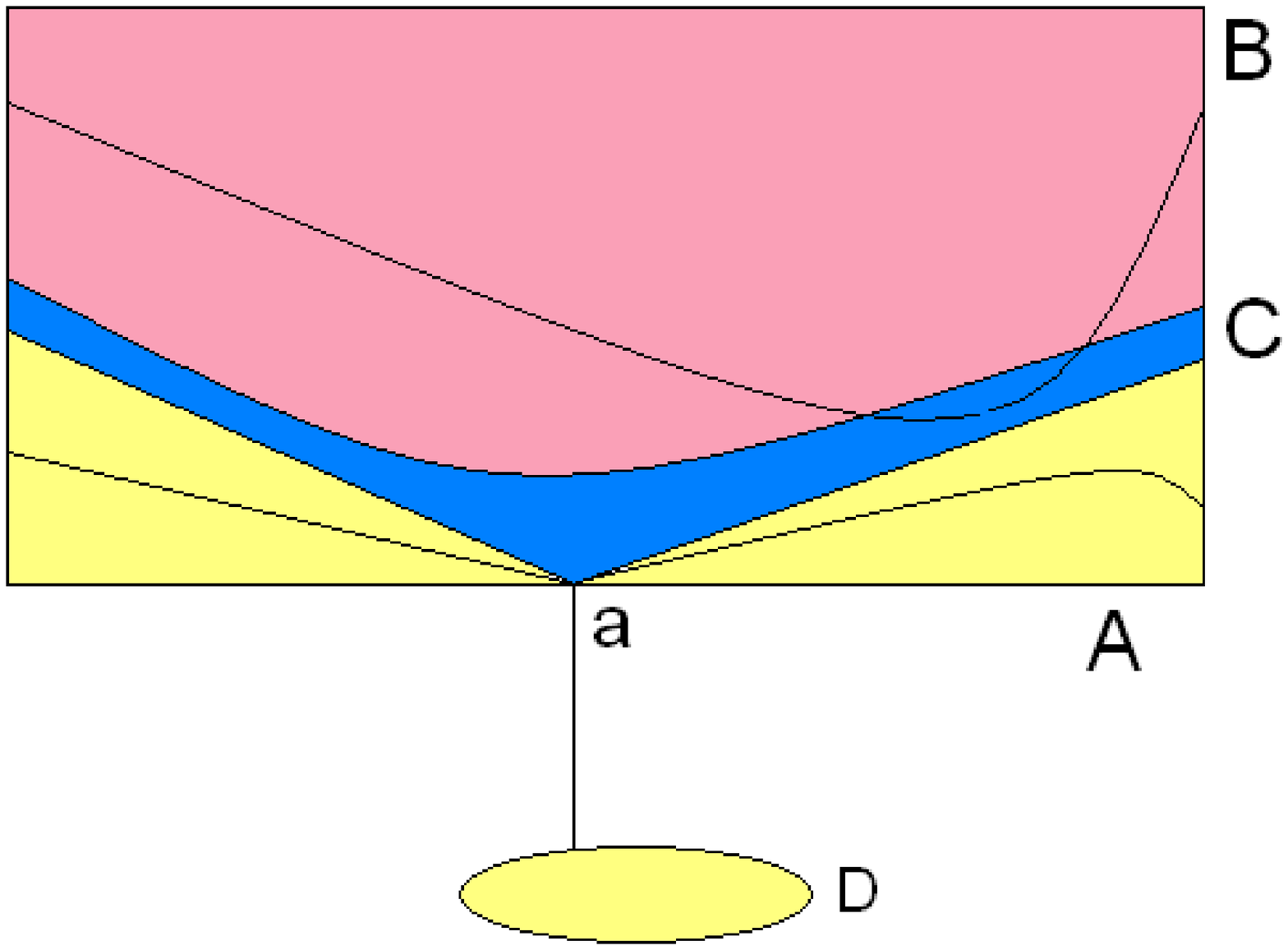}
}
\begin{quote}
{\small{Figure 2.
The figure on the left-hand side gives an illustration
of the situation described in Definition \ref{def-2.1}.
In the present example
the space $X$ is the figure itself as a subset of the plane.
The set $C$ (darker region) cuts the arcs between $A$ (light)
and $B$ (grey). We allow a nonempty intersection between $B$ and $C$
as well as between $A$ and $C$ (the singleton $\{a\}$).
Notice that the only manner to connect with a path
the points of $A$ to the points of the ``appendix'' $D$ is passing through the point $a\in A\cap C.$
\\
The figure on the right-hand side provides an interpretation of Lemma \ref{lem-ABC.5}.
With respect to a function $f$ having its factor $\mu$
defined like in \eqref{eq-f.2b}, we have painted
with a light color the points where $f<0$ and in grey color the points where
$f > 0.$ Note that the region $D$ has been painted in light color, because
$D\subseteq {\mathcal S}(A).$
}
}
\end{quote}

Until now we have considered only the case of paths connecting two disjoint sets $A$ and $B.$
This choice is motivated
by the foregoing examples for subsets of $N-$dimensional spaces.
For sake of completeness we end this section by discussing
the situation in which $A$ and $B$ are joined by a continuum. We confine ourselves to the
following lemma which will find an application in Theorem \ref{th-3c.1}.

\begin{lemma}\label{lem-ABC.6}
Let $X$ be a connected and locally arcwise connected metric space and let
$A, B\subseteq X$ be closed and nonempty sets with $A\cap
B=\emptyset.$ Let $\Gamma\subseteq X$ be a compact connected set
such that
$$\Gamma\cap A\not=\emptyset,\quad \Gamma\cap B\not=\emptyset.$$
Then, for every $\varepsilon > 0$ there exists a path
$\gamma=\gamma_{\varepsilon}\,: [0,1]\to X$ with
$\gamma(0)\in A,$ $\gamma(1)\in B$ and
$${\overline{\gamma}} \subseteq B(\Gamma,\varepsilon).$$
Moreover, if $X$ is locally compact and $C\subseteq X$
is a closed set which cuts the arcs between $A$ and $B,$
then
$$\Gamma\cap C\not=\emptyset.$$
\end{lemma}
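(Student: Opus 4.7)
For the first assertion, my strategy is to approximate the continuum $\Gamma$ by a concatenation of short sub-paths that altogether remain in $B(\Gamma,\varepsilon)$. Fix $\varepsilon>0$ and, exploiting local arcwise connectedness of $X$, for every $x\in\Gamma$ select an open arcwise connected neighborhood $U_x$ of $x$ with $U_x\subseteq B(\Gamma,\varepsilon)$. By compactness of $\Gamma$, finitely many of these neighborhoods $U_1,\dots,U_n$ already cover $\Gamma$. Fix also $a\in\Gamma\cap A$ and $b\in\Gamma\cap B$.

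The decisive step is a chaining argument: since $\{U_i\cap\Gamma\}$ is a finite open cover of the connected space $\Gamma$, the standard lemma that a finite open cover of a connected set admits a simple chain joining any two prescribed points produces a subfamily $U_{i_1},\dots,U_{i_k}$ with $a\in U_{i_1}$, $b\in U_{i_k}$ and $U_{i_j}\cap U_{i_{j+1}}\cap\Gamma\neq\emptyset$ for each $j$. Choose transition points $p_0:=a$, $p_k:=b$ and $p_j\in U_{i_j}\cap U_{i_{j+1}}\cap\Gamma$ for $1\leq j\leq k-1$. Within each arcwise connected $U_{i_j}$ I connect $p_{j-1}$ to $p_j$ by a path, and glue these sub-paths consecutively (in the sense of the $\star$-operation of the Introduction) to produce $\gamma=\gamma_\varepsilon:[0,1]\to X$ with $\gamma(0)=a\in A$, $\gamma(1)=b\in B$ and $\overline{\gamma}\subseteq\bigcup_{j=1}^{k}U_{i_j}\subseteq B(\Gamma,\varepsilon)$.

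For the second assertion I would apply the first part with $\varepsilon_n=1/n$ to obtain paths $\gamma_n$ joining $A$ to $B$ inside $B(\Gamma,1/n)$. Since $C$ cuts the arcs between $A$ and $B$, there exist $c_n\in\overline{\gamma_n}\cap C$, and correspondingly $g_n\in\Gamma$ with $d(c_n,g_n)<1/n$. By compactness of $\Gamma$, a subsequence $g_{n_k}$ converges to some $g^*\in\Gamma$; then $c_{n_k}\to g^*$ too, and the closedness of $C$ forces $g^*\in C$, hence $g^*\in\Gamma\cap C$. (Local compactness of $X$ does not appear to be strictly needed for this argument, since the compactness of $\Gamma$ already suffices to extract the convergent subsequence; it may however enter naturally if one prefers to work with a relatively compact tubular neighbourhood of $\Gamma$.)

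I expect the main obstacle to lie in the chaining step of the first part: one must ensure that consecutive sets in the selected chain meet at points of $\Gamma$ itself, not merely inside the $\varepsilon$-tube $B(\Gamma,\varepsilon)$, for only then can the linking points $p_j$ be taken in $\Gamma$ and the intermediate paths be constructed inside each $U_{i_j}$ without escaping $B(\Gamma,\varepsilon)$. This is exactly where the connectedness of $\Gamma$ enters in an essential way, through the classical finite-chain lemma applied to the induced open cover $\{U_i\cap\Gamma\}$ of the connected set $\Gamma$.
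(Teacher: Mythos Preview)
Your proof is correct and follows the same overall strategy as the paper: cover $\Gamma$ by finitely many small neighborhoods, chain them via the connectedness of $\Gamma$, glue the resulting local paths, and then pass to a limit for the second assertion. Two differences are worth noting. For the first part, the paper uses balls $B(p_i,\delta_i)$ with $\delta_i<\varepsilon$ chosen so that any two points of $B(p_i,\delta_i)$ can be joined inside $B(p_i,\varepsilon)$, and then argues from the bipartition property that the balls can be relabelled so that consecutive ones intersect; your direct choice of arcwise connected open $U_x\subseteq B(\Gamma,\varepsilon)$ together with the standard finite-chain lemma applied to the relative cover $\{U_i\cap\Gamma\}$ of the connected set $\Gamma$ is somewhat tidier. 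More substantively, your remark about the second part is correct: the paper invokes local compactness of $X$ in order to trap the sequence $(c_n)$ inside a compact set $\overline{B(\Gamma,\varepsilon_0)}$ before extracting a convergent subsequence, whereas you extract the subsequence from $(g_n)\subseteq\Gamma$ (compact by hypothesis) and then deduce $c_{n_k}\to g^*$ from $d(c_{n_k},g_{n_k})<1/n_k$ and the closedness of $C$. This bypasses local compactness entirely, so that hypothesis is indeed superfluous for the conclusion $\Gamma\cap C\neq\emptyset$.
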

\begin{proof}
Let $\varepsilon > 0$ be fixed and consider, for every $p\in \Gamma,$
a radius $\delta_p\in\, ]0,\varepsilon[$ such that any two points
in $B(p,\delta_p)$ can be joined by a path in $B(p,\varepsilon).$
By the compactness of $\Gamma$ we can find a finite number of points
$$p_1\,,p_2\,,\dots, p_k\,\in \Gamma,$$
such that
$$\Gamma\subseteq
{\mathcal B}:=\bigcup_{i=1}^k B(p_i,\delta_i)
\subseteq B(\Gamma,\varepsilon), \qquad \mbox{with } \;\delta_i:= \delta_{p_i}\,.$$
As a consequence of the hypothesis of connectedness of $\Gamma,$
the following property holds:
For every partition of $\{1,\dots,k\}$ into two nonempty disjoint subsets $J_1$
and $J_2\,,$ there exist $i\in J_1$ and $j\in J_2$ such that
$B(p_i,\delta_i)\cap B(p_j,\delta_j)\not=\emptyset.$
This, in turns, implies that we can rearrange the $p_i$'s (possibly changing their
order in the labelling) so that

$$B(p_i,\delta_i)\cap B(p_{i+1},\delta_{i+1})\not=\emptyset,\quad\forall\, i=1,\dots, k-1.$$

Hence we can conclude that for any pair of points $(w,z)\in
{\mathcal B},$ with $w\not= z,$ there is a path $\gamma=
\gamma_{w,z}$ joining $w$ with $z$ and such that
$\overline{\gamma}\subseteq B(\Gamma,\varepsilon).$ In particular,
taking $a\in A\cap \Gamma$ and $b\in B\cap \Gamma,$ we have that
there exists a path $\gamma = \gamma_{\varepsilon}\,: [0,1]\to
B(\Gamma,\varepsilon),$ with $\gamma(0) = a$ and $\gamma(1) = b$
and this proves the first part of the statement.

Assume now that $X$ is locally compact (i.e., for any $p\in X$ and
$\eta >0,$ there exists $0<\mu_p \leq\eta$ such that
${\overline{B(p,\mu_p)}}$ is compact). By the compactness of
$\Gamma$ we can find a finite number of points
$q_1\,,q_2\,,\dots, q_{l}\,\in \Gamma$
and corresponding radii $\mu_i := \mu_{q_i}$ such that
$$\Gamma\subseteq
{\mathcal A}:=\bigcup_{i=1}^l B(q_i,\mu_i)$$
and
${\overline {\mathcal A}} = \bigcup_{i=1}^l {\overline{B(q_i,\mu_i)}}$
is compact. Since ${\mathcal A}$ is an open neighborhood of the compact set $\Gamma,$
there exists $\varepsilon_0 > 0$ such that
$B(\Gamma,\varepsilon_0) \subseteq {\mathcal A}.$ Hence, for each
$0 < \varepsilon \leq \varepsilon_0$ we have that the set
${\overline{B(\Gamma,\varepsilon)}}$ is compact.

Taking now $\varepsilon=\frac 1 n,$ we know that for every $n\in
\mathbb N$ there exists a path $\gamma_{n}\,: [0,1]\to X,$ with
$\gamma_n(0)\in A,$ $\gamma_n(1)\in B$ and $\overline{\gamma_n}
\subseteq B(\Gamma,\tfrac 1 n).$ But, since $C$ cuts the arcs
between $A$ and $B,$ it follows that for every $n\in \mathbb N,$
there is a point $c_n\in C\cap B(\Gamma,\frac 1 n).$
For $n\geq \hat n$ large enough ($\hat n > 1/\varepsilon_0$),
the sequence $(c_n)_{n\geq\hat n}$ is
contained in the compact set ${\overline{B(\Gamma,\varepsilon_0)}}$
and therefore it admits a
converging subsequence
$c_{n_k}\to c^{*}\in {\overline{B(\Gamma,\varepsilon_0)}}.$
Since $d(c_{n_k},\Gamma)<\frac 1 {n_k}$ and
the sets $C,\,\Gamma$ are closed, the limit point $c^{*}\in
\Gamma\cap C.$ The proof is complete.
\end{proof}

\section{Fixed points in generalized rectangles}\label{sec-3a}

We present here some
applications of the topological lemmas obtained in Section \ref{sec-2} to the
intersection of generalized surfaces which
separate the opposite edges of an $N-$dimensional cube. Such
generalized surfaces (see Definition \ref{def-2b.1}) will be described as zero-sets of continuous
scalar functions and therefore a nonempty intersection will be
obtained as a zero of a suitably defined vector field. To this
aim, we recall a classical result about the existence of zeros for
continuous maps in ${\mathbb R}^N,$ the Poincar\'{e}$-$Miranda
theorem.

\begin{theorem}\label{th-2b.1}
Let $I^N:= [0,1]^N$ be the $N-$dimensional unit cube in ${\mathbb R}^N$
for which we denote by
$[x_i = k]:= \{x=(x_1,\dots,x_N)\in I^N: \, x_i = k\}.$
Let
$F = (F_1,\dots, F_N): I^N\to {\mathbb R}^N$
be a continuous mapping such that, for each $i\in\{1,\dots,N\},$
$$F_i(x)\leq 0,\;\forall\, x\in [x_i = 0]\,\mbox{ and }\;
F_i(x)\geq 0,\;\forall\, x\in [x_i = 1]$$
or
$$F_i(x)\geq 0,\;\forall\, x\in [x_i = 0]\,\mbox{ and }\;
F_i(x)\leq 0,\;\forall\, x\in [x_i = 1].$$
Then there exists $\bar{x}\in I^N$ such that $F(\bar{x}) = 0.$
\end{theorem}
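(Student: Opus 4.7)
The plan is to deduce the Poincar\'{e}$-$Miranda theorem from the Brouwer fixed point theorem via a truncated displacement argument. First, by possibly replacing the variable $x_i$ with $1-x_i$ on every coordinate where the second alternative of the sign condition holds, I may reduce to the normalized situation
$$F_i(x)\leq 0,\; \forall\, x\in[x_i=0],\qquad F_i(x)\geq 0,\; \forall\, x\in[x_i=1],\qquad i=1,\dots,N.$$
This involves a finite composition of homeomorphisms of $I^N$ onto itself, so it preserves the existence and location of zeros of $F$ up to relabelling.

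Second, I would fix an arbitrary $\alpha>0$ and introduce the truncated map $g=(g_1,\dots,g_N):I^N\to I^N$ defined by
$$g_i(x):=\max\bigl(0,\,\min(1,\,x_i-\alpha F_i(x))\bigr).$$
The pointwise truncation guarantees that $g$ continuously sends $I^N$ into itself, so Brouwer's fixed point theorem yields some $\bar{x}\in I^N$ with $g(\bar{x})=\bar{x}$.

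Third, I would analyze the fixed-point equation $g_i(\bar{x})=\bar{x}_i$ coordinate by coordinate. Exactly one of the following three cases occurs for each $i$: (a) the quantity $\bar{x}_i-\alpha F_i(\bar{x})$ lies in $[0,1]$, so directly $F_i(\bar{x})=0$; (b) $\bar{x}_i=0$ with $\bar{x}_i-\alpha F_i(\bar{x})\leq 0$, giving $F_i(\bar{x})\geq 0$, which combined with the face hypothesis $F_i(\bar{x})\leq 0$ on $[x_i=0]$ forces $F_i(\bar{x})=0$; or (c) $\bar{x}_i=1$ with $\bar{x}_i-\alpha F_i(\bar{x})\geq 1$, giving $F_i(\bar{x})\leq 0$, which combined with the face hypothesis $F_i(\bar{x})\geq 0$ on $[x_i=1]$ again forces $F_i(\bar{x})=0$. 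Since this holds for every $i$, we conclude $F(\bar{x})=0$.

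The only delicate point is the interplay between the truncation cases (b), (c) and the boundary sign conditions on $F_i$; the argument is otherwise a routine verification. The choice of $\alpha>0$ is immaterial, so no continuation/homotopy step is needed; Brouwer is invoked just once.
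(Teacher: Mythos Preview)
Your argument is correct. The reduction to the normalized sign pattern by reflecting selected coordinates is valid (and could equally well be done by negating the offending components $F_i$); the truncated map $g$ lands in $I^N$ and is continuous; and your three-case analysis of the fixed-point equation, exploiting the boundary sign hypotheses to kill the truncated cases, goes through cleanly.

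There is, however, nothing to compare against: the paper does not prove Theorem~\ref{th-2b.1}. It merely \emph{recalls} the Poincar\'{e}$-$Miranda theorem as a classical tool, with references to Kulpa~\cite{Ku-97} and Mawhin~\cite{Ma-00}, and then uses it as an input to Theorem~\ref{th-2b.2}. Your proof via a single application of Brouwer's theorem to a clipped displacement is one of the standard routes to Poincar\'{e}$-$Miranda and would serve perfectly well as a self-contained justification had the paper chosen to include one.
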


We introduce now the spaces we are going to consider.

\begin{definition}\label{def-2b.1}
Let $Z$ be a metric space and
$$h: {\mathbb R}^N\supseteq I^N \to X\subseteq Z$$
be a homeomorphism of $I^N$ onto its image $X.$
We call the pair
$${\widehat{X}}:= ({X},h)$$
a \textit{generalized $N-$dimensional rectangle}
(or, simply, a generalized rectangle) \textit{of $Z$}.
We also set
$${X}_{i}^{\ell}:= h([x_i = 0]),\quad
{X}_{i}^{r}:= h([x_i = 1])$$ and call them the \textit{left} and
the \textit{right} $i-$faces of $X$.
\\
Finally, we define
$$\vartheta X:= h(\partial I^N)$$
and call it the \textit{contour} of $X.$
\end{definition}

\medskip

Our main result is the following theorem which can be considered as a variant of
the Hurewicz$-$Wallman
lemma about dimension \cite{HuWa-41}. The statements of the two results are in fact very similar, but the lemma in \cite{HuWa-41} concerns, instead of our concept of cutting, the stronger idea of separation and requires the sets  $A, B, C$ in Definition \ref{def-2.1} to be pairwise disjoint (see \cite{Ku-68}).
Hence, because of some technical differences
which are crucial in view of our applications, we have chosen to provide all the details.

\begin{theorem}\label{th-2b.2}
Let ${\widehat{X}}:= ({X},h)$ be a generalized rectangle in a metric space $Z.$
Assume that, for each $i\in\{1,\dots,N\},$ there exists a compact set
$${\mathcal S}_i\subseteq {X}$$
such that ${\mathcal S}_i$ cuts the arcs between ${X}_{i}^{\ell}$
and ${X}_{i}^{r}$ in $X.$ Then
$$\bigcap_{i=1}^N {\mathcal S}_i \not=\emptyset.$$
\end{theorem}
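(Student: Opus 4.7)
The natural plan is to transfer everything to the model cube $I^N$ via the homeomorphism $h$, produce a scalar function for each coordinate direction using the cutting condition, and then invoke Poincar\'e--Miranda.

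First I would set $C_i := h^{-1}(\mathcal{S}_i) \subseteq I^N$ for $i=1,\dots,N$. Since $h$ is a homeomorphism of $I^N$ onto $X$, each $C_i$ is a nonempty compact subset of $I^N$ (nonempty because otherwise the straight segment $t \mapsto h((x_1,\dots,t,\dots,x_N))$ from a point of $X_i^\ell$ to a point of $X_i^r$ would contradict the cutting assumption). Moreover, because $h$ carries paths in $I^N$ to paths in $X$ (and $h^{-1}$ does the reverse), $C_i$ cuts the arcs between the opposite faces $[x_i = 0]$ and $[x_i = 1]$ of $I^N$.

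Next I would invoke Lemma \ref{lem-ABC.1} in the cube. The space $I^N$ is a compact, connected, locally arcwise connected metric space, and the sets $[x_i = 0]$, $[x_i = 1]$ are closed, nonempty and disjoint. Hence for each $i$ the lemma supplies a continuous function
\[
f_i: I^N \to \mathbb{R}
\]
such that $f_i(x) \leq 0$ for $x \in [x_i = 0]$, $f_i(x) \geq 0$ for $x \in [x_i = 1]$, and $C_i = \{x \in I^N : f_i(x) = 0\}$.

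Then I would form the continuous vector field $F := (f_1,\dots,f_N) : I^N \to \mathbb{R}^N$. The sign conditions on the $f_i$'s on the faces $[x_i = 0]$ and $[x_i = 1]$ are precisely the boundary hypotheses of the Poincar\'e--Miranda theorem (Theorem \ref{th-2b.1}), so there exists $\bar{x} \in I^N$ with $F(\bar{x}) = 0$. That means $f_i(\bar{x}) = 0$ for every $i$, i.e.\ $\bar{x} \in \bigcap_{i=1}^N C_i$, and therefore
\[
h(\bar{x}) \in \bigcap_{i=1}^N h(C_i) = \bigcap_{i=1}^N \mathcal{S}_i,
\]
which is the desired conclusion.

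The only step that requires genuine care is the first: checking that the cutting property is preserved by $h^{-1}$ and that the hypotheses of Lemma \ref{lem-ABC.1} really apply inside $I^N$ (in particular, that each $C_i$ is nonempty, which forces a brief argument using an explicit transverse path). Once that is in place, the rest is a direct marriage of Lemma \ref{lem-ABC.1} with Theorem \ref{th-2b.1}, with no further obstacle.
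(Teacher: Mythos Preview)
Your proof is correct and essentially identical to the paper's own argument: transfer the cutting sets to $I^N$ via $h^{-1}$, apply Lemma~\ref{lem-ABC.1} in each coordinate to produce the scalar functions $f_i$, and then invoke the Poincar\'e--Miranda theorem on $F=(f_1,\dots,f_N)$. The only difference is cosmetic---you spell out the nonemptiness of $C_i$ and the verification of the hypotheses of Lemma~\ref{lem-ABC.1}, which the paper leaves implicit.
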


\begin{proof}
Through the inverse of the homeomorphism
$h: {\mathbb R}^N\supseteq I^N \to X\subseteq Z$
we can define the compact sets
$$C_i:= h^{-1}({\mathcal S}_i),$$
which cut the arcs between
$[x_i=0]$ and $[x_i=1]$ in $I^N$ (for $i=1,\dots, N$).
Clearly, it will be sufficient to prove that
$$\bigcap_{i=1}^N C_i \not=\emptyset.$$
By Lemma \ref{lem-ABC.1}, for every $i=1,\dots,N,$ there exists
a continuous function $f_i: I^N\to {\mathbb R}$ such that
$f_i \leq 0$ on $[x_i =0]$ and $f_i \geq 0$ on $[x_i=1].$ Moreover,
$$C_i =\{x\in I^N:\, f_i(x) = 0\}.$$
The continuous vector field
$f^{^{^{{\!\!\!\!\rightarrow}}}}:= (f_1,\dots,f_N):I^N\to {\mathbb R}^N$
satisfies the assumptions of the Poincar\'{e}$-$Miranda theorem and therefore
there exists ${\bar{x}}\in I^N$ such that
$$f_i({\bar{x}})=0,\;\;\forall\, i=1,\dots,N.$$
Hence ${\bar{x}}\in \bigcap_{i=1}^N C_i\,.$
\end{proof}

\begin{remark}\label{rem-2b.1}
A very special case in Definition \ref{def-2b.1} occurs
when $Z= {\mathbb R}^N,$ $X= I^N$ and $h= Id_{{\mathbb R}^N}\,.$
In order to avoid a cumbersome notation, we denote the pair
$(I^N,Id_{{\mathbb R}^N})$ simply by $I^N.$
This is, for example, the setting in the Hurewicz$-$Wallman
lemma \cite{HuWa-41} and in the work by Kampen \cite{Ka-00}.

\hfill$\lhd$\\
\end{remark}

\noindent
As a first application of Theorem \ref{th-2b.2} we present a corollary which generalizes
a result due to J. Kampen in \cite{Ka-00}, providing also a more direct proof.

\bigskip

\noindent
\begin{corollary}\label{cor-2b.1} (see \cite[Corollary 4, p.513]{Ka-00})
Let
$$\phi=(\phi_1,\dots,\phi_N): {\mathbb R}^N\supseteq I^N \to {\mathbb R}^N$$
be a continuous map such that for every
$j\in\{1,\dots,N\}$ one of the following conditions holds:
\begin{itemize}
\item[$(C)\;$]
$\phi_{j}([x_j = 0]\cup [x_j = 1])\subseteq [0,1];$\\
\item[$(E)\;$]
for every continuous path $\gamma=(\gamma_1,\dots,\gamma_N):[0,1]\to I^N$
such that $\gamma_j(0) = 0$ and $\gamma_j(1) =1,$ it holds that
$\phi_j({\overline{\gamma}})\supseteq [0,1].$
\end{itemize}
Then $\phi$ has at least a fixed point in $I^N.$
\end{corollary}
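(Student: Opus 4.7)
The plan is to reduce fixed points of $\phi$ to a common zero of the vector field $F=(F_1,\dots,F_N)$ defined by $F_j(x):=\phi_j(x)-x_j$, and then to obtain this common zero as the nonempty intersection guaranteed by Theorem \ref{th-2b.2}. The natural candidates for the cutting sets are
\[
\mathcal{S}_j:=\{x\in I^N:\,F_j(x)=0\}\qquad (j=1,\dots,N),
\]
each of which is closed in the compact space $I^N$, hence compact. Thus the corollary will follow once I verify, for every $j$, that $\mathcal{S}_j$ is nonempty and cuts the arcs between the opposite faces $[x_j=0]$ and $[x_j=1]$ of $I^N$ (which are disjoint, as required by Definition \ref{def-2.1}).

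First I would treat the \emph{contraction-type} alternative $(C)$. Here, on $[x_j=0]$ one has $F_j(x)=\phi_j(x)\geq 0$, and on $[x_j=1]$ one has $F_j(x)=\phi_j(x)-1\leq 0$. The continuous function $F_j$ therefore satisfies the hypotheses of Lemma \ref{lem-ABC.1} (with the roles of $A$ and $B$ suitably assigned), so its zero set $\mathcal{S}_j$ cuts the arcs between $[x_j=0]$ and $[x_j=1]$. Nonemptiness is automatic from the cutting condition applied to the straight segment joining the two opposite faces.

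Next, for the \emph{expansion-type} alternative $(E)$, I would argue directly by the intermediate value theorem along any path. Let $\gamma=(\gamma_1,\dots,\gamma_N):[0,1]\to I^N$ with $\gamma_j(0)=0$ and $\gamma_j(1)=1$ (any path joining $[x_j=0]$ to $[x_j=1]$ can be reparametrized this way in its $j$-th coordinate by hypothesis). Since $\phi_j(\overline{\gamma})\supseteq[0,1]$, there exist $s,t\in[0,1]$ with $\phi_j(\gamma(s))=0$ and $\phi_j(\gamma(t))=1$. Hence
\[
F_j(\gamma(s))=-\gamma_j(s)\leq 0,\qquad F_j(\gamma(t))=1-\gamma_j(t)\geq 0.
\]
By continuity of $F_j\circ\gamma$ and Bolzano's theorem on the subinterval joining $s$ and $t$, there is some $t^\ast$ with $F_j(\gamma(t^\ast))=0$, i.e.\ $\gamma(t^\ast)\in\mathcal{S}_j$. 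This shows simultaneously that $\mathcal{S}_j$ is nonempty and that it cuts the arcs between $[x_j=0]$ and $[x_j=1]$.

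Having verified the cutting hypothesis for each $j$, Theorem \ref{th-2b.2} (applied to the trivial generalized rectangle $\widehat{I^N}=(I^N,\mathrm{Id})$ of Remark \ref{rem-2b.1}) yields a point $\bar x\in\bigcap_{j=1}^N\mathcal{S}_j$, i.e.\ $\phi_j(\bar x)=\bar x_j$ for every $j$, so that $\phi(\bar x)=\bar x$. The only delicate point is the expansion case $(E)$: condition $(E)$ is not a pointwise sign condition on the boundary faces, so Lemma \ref{lem-ABC.1} cannot be applied to $F_j$ globally; instead one has to invoke the cutting property path-by-path, exploiting that surjectivity onto $[0,1]$ along \emph{any} admissible path automatically forces $F_j$ to change sign along that path. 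Once this is observed, the rest is mechanical.
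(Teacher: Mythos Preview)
Your argument is correct and follows essentially the same route as the paper's: define $\mathcal{S}_j=\{x\in I^N:\phi_j(x)=x_j\}$, verify the cutting property for each $j$ via Bolzano along an arbitrary path, and apply Theorem~\ref{th-2b.2} with $X=I^N$, $h=\mathrm{Id}$. The only cosmetic difference is that for case $(C)$ the paper argues by Bolzano directly along each path rather than invoking Lemma~\ref{lem-ABC.1}; note that Lemma~\ref{lem-ABC.1} as stated assumes $C\neq\emptyset$, so strictly speaking you should check nonemptiness of $\mathcal{S}_j$ \emph{before} invoking it (e.g., by the same Bolzano argument on a straight segment), rather than deducing it afterward from the cutting property.
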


If condition $(C)$ holds for some $j\in\{1,\dots,N\},$ we say that $j$ is a
{\textit{contractive direction}},
while we say that $j$ is an {\textit{expansive direction}} when $(E)$ is satisfied.
Once for all, we point out that the term ``contractive'' has to be meant in a broad
manner as it does not imply that the map is a contraction in the classical sense.

\begin{proof}
For every $i\in\{1,\dots,N\}$ we define
$${\mathcal S}_i:= \{x=(x_1,\dots,x_N)\in I^N:\, x_i = \phi_i(x)\}.$$
Let $j\in \{1,\dots,N\}$ be fixed
and let $\gamma: [0,1]\to I^N$ be a continuous map such that $\gamma(0)\in [x_j = 0]$
and $\gamma(1)\in [x_j=1].$

If $j$ is a contractive direction, so that $(C)$ holds,
we have that $\phi_j(\gamma(0)) \geq 0 = \gamma_j(0)$
and $\phi_j(\gamma(1)) \leq 1 = \gamma_j(1).$ Bolzano theorem ensures the existence of $t^*\in [0,1]$
such that $\phi_j(\gamma(t^*)) = \gamma_j(t^*),$ that is $\gamma(t^*)\in {\mathcal S}_j\,.$

On the other hand, if $j$ is an expansive direction and thus
$(E)$ holds, there exist $t_1,t_2\in [0,1]$
such that $\phi_j(\gamma(t_1)) = 0 \leq \gamma_j(t_1)$ as well as
$\phi_j(\gamma(t_2)) = 1 \geq \gamma_j(t_2).$
Bolzano theorem ensures the existence of ${\tilde{t}}\in [0,1]$
(with $t_1 \leq {\tilde{t}} \leq t_2$ or $t_2 \leq {\tilde{t}} \leq t_1$)
such that $\phi_j(\gamma({\tilde{t}})) = \gamma_j({\tilde{t}}),$
that is $\gamma({\tilde{t}})\in {\mathcal S}_j\,.$

The assumptions of Theorem \ref{th-2b.2} are thus satisfied
with respect to $X = I^N$ and $h=Id_{{\mathbb R}^N}\,$
and so
$\bigcap_{i=1}^N {\mathcal S}_i \not=\emptyset.$
By definition, any point
${\bar{x}}\in \bigcap_{i=1}^N {\mathcal S}_i$ is such that $\phi({\bar{x}}) = {\bar{x}}.$
\end{proof}

\bigskip

\noindent
Corollary \ref{cor-2b.1} extends \cite[Corollary 4, p.513]{Ka-00} where, instead of
$(C),$ the stronger condition
\begin{itemize}
\item[$(C')\;$]
$\phi_{j}(I^N)\subseteq [0,1]$
\end{itemize}
was assumed.

\bigskip

\noindent In order to show the relationship with \cite{Zg-01}, we
introduce the following definition, inspired by \cite[Definition 4]{Zg-01}.

\begin{definition}\label{def-2b.2}
Assume we have two $N-$dimensional rectangles
${\mathcal R}_1:= \prod_{i=1}^N [a_i,b_i]$ and
${\mathcal R}_2:= \prod_{i=1}^N [c_i,d_i]$
and let $\psi: {\mathbb R}^N \supseteq{\mathcal R}_1\to {\mathbb R}^N$ be a
continuous map.
Let $1\leq i_1 < i_2 <\dots < i_k \leq N$ be a finite sequence of indexes.
We say that {\textit{${\mathcal R}_1$ $\psi-$covers ${\mathcal R}_2$ in
$(i_1,i_2,\dots,i_k)-$direction}} if the following conditions hold:
\begin{itemize}
\item{}\:
for every $j\in \{i_1,i_2,\dots,i_k\},$
$$
[c_j,d_j] \subseteq \Bigl[\,\max_{x\in {\mathcal R}_1\,,\, x_j = a_j}\psi_j(x),
\min_{x\in {\mathcal R}_1\,,\, x_j = b_j}\psi_j(x)\,\Bigr]
$$
or
$$
[c_j,d_j] \subseteq \Bigl[\,\max_{x\in {\mathcal R}_1\,,\, x_j = b_j}\psi_j(x),
\min_{x\in {\mathcal R}_1\,,\, x_j = a_j}\psi_j(x)\,\Bigr]\,;
$$
\item{}\;
for every $j\not\in \{i_1,i_2,\dots,i_k\},$
$\psi_j({\mathcal R}_1)\subseteq [c_j,d_j].$
\end{itemize}
\end{definition}

\bigskip

\noindent
\begin{corollary}\label{cor-2b.2} (see \cite{Zg-01})
Let
$\psi: {\mathbb R}^N\supseteq {\mathcal R} \to {\mathbb R}^N$ be a continuous
map, for
${\mathcal R}:= \prod_{i=1}^N [a_i,b_i]$ and
suppose there exists a
finite sequence of indexes
$1\leq i_1 < i_2 <\dots < i_k \leq N,$ such that
${\mathcal R}$ $\psi-$covers ${\mathcal R}$ in
$(i_1,i_2,\dots,i_k)-$direction. Then
$\psi$ has at least a fixed point in ${\mathcal R}.$
\end{corollary}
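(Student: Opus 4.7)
The plan is to apply Theorem \ref{th-2b.2} directly to $\mathcal{R}$, viewed as a generalized $N$-dimensional rectangle in $\mathbb{R}^N$ via the natural affine homeomorphism
$$h : I^N \to \mathcal{R}, \qquad h(t_1,\dots,t_N) := (a_1 + t_1(b_1-a_1),\dots,a_N + t_N(b_N-a_N)).$$
Under this identification the left and right $i$-faces of $\mathcal{R}$ are the hyperplane sections $\{x\in\mathcal{R}: x_i = a_i\}$ and $\{x\in\mathcal{R}: x_i = b_i\}$. For each $i\in\{1,\dots,N\}$ I would introduce the compact set
$$\mathcal{S}_i := \{x\in\mathcal{R} : \psi_i(x) = x_i\},$$
and show that it cuts the arcs in $\mathcal{R}$ between these two $i$-faces; Theorem \ref{th-2b.2} will then provide a point of $\bigcap_{i=1}^N\mathcal{S}_i$, which by construction is a fixed point of $\psi$.

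The verification of the cutting property splits according to whether $i$ is an expansive or a contractive direction. Suppose first that $i\in\{i_1,\dots,i_k\}$ and let $\gamma:[0,1]\to\mathcal{R}$ be any path with $\gamma_i(0)=a_i$ and $\gamma_i(1)=b_i$. In the first alternative of Definition \ref{def-2b.2} (the other being symmetric, with the two boundary faces swapped), the inclusion
$$[a_i,b_i] \subseteq \Bigl[\,\max_{x\in\mathcal{R},\,x_i=a_i}\psi_i(x),\;\min_{x\in\mathcal{R},\,x_i=b_i}\psi_i(x)\,\Bigr]$$
forces $\psi_i(\gamma(0)) \le a_i = \gamma_i(0)$ and $\psi_i(\gamma(1)) \ge b_i = \gamma_i(1)$, so Bolzano's theorem applied to the continuous scalar function $t \mapsto \psi_i(\gamma(t)) - \gamma_i(t)$ produces $t^\ast\in[0,1]$ with $\gamma(t^\ast)\in\mathcal{S}_i$. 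If instead $i\notin\{i_1,\dots,i_k\}$, then the contractive-direction hypothesis $\psi_i(\mathcal{R}) \subseteq [a_i,b_i]$ gives $\psi_i(\gamma(0)) \ge a_i = \gamma_i(0)$ and $\psi_i(\gamma(1)) \le b_i = \gamma_i(1)$, and the same Bolzano argument again produces a crossing point in $\mathcal{S}_i$.

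Having checked the cutting property for every index, Theorem \ref{th-2b.2} yields $\bar{x}\in\bigcap_{i=1}^N\mathcal{S}_i$, which satisfies $\psi_i(\bar{x}) = \bar{x}_i$ for each $i$ and is therefore a fixed point of $\psi$. The whole argument is essentially a verbatim extension of the proof of Corollary \ref{cor-2b.1}: condition $(E)$ is replaced by the boundary sign-condition extracted from Definition \ref{def-2b.2} in the expansive case, and condition $(C)$ is replaced by the image containment $\psi_j(\mathcal{R})\subseteq[c_j,d_j]$ in the contractive case. No serious obstacle is expected; the only point where some care is needed is to distinguish the two alternatives in the expansive case and to translate the interval containments into the correct pointwise inequalities on the two $i$-faces before invoking Bolzano.
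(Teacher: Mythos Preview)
Your argument is correct and is essentially the same as the paper's, with only a cosmetic difference in packaging: the paper conjugates $\psi$ to a map $\phi:=h^{-1}\circ\psi\circ h$ on $I^N$ and invokes Corollary~\ref{cor-2b.1}, whereas you work directly on $\mathcal{R}$ and invoke Theorem~\ref{th-2b.2}, inlining the Bolzano step that the proof of Corollary~\ref{cor-2b.1} performs. Both routes reduce to the same cutting-surfaces/fixed-point-set argument, and your extraction of the endpoint sign conditions from Definition~\ref{def-2b.2} is exactly what is needed.
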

\begin{proof}
The homeomorphism $h=(h_1,\dots,h_N): {\mathbb R}^N\to {\mathbb R}^N,$
with\begin{equation}\label{eq-2b.1}
h_i(x_1,\dots,x_N):= a_i + (b_i - a_i) x_i
\end{equation}
maps the unitary cube $I^N$ onto ${\mathcal R}.$
It is straightforward to check that the map $\phi:= h^{-1}\circ\psi\circ h$
satisfies assumption $(E)$ along the components in $\{i_1,i_2,\dots,i_k\},$
while condition $(C')$ holds along the remaining components.
Hence Corollary \ref{cor-2b.1} applies ensuring the existence of
at least a fixed point
${\bar{x}}\in I^N$ for the map $\phi.$ This implies that
${\bar{y}}:=h({\bar{x}})\in {\mathcal R}$ is a fixed point for $\psi.$
\end{proof}

Corollary \ref{cor-2b.2} is a trivial case of a widely more
general result (by P. Zgliczy\'nski in \cite{Zg-01}) that we recall
below as Theorem \ref{th-2b.z}.
Actually, the author considered a more restrictive covering condition
(i.e. {\textit{covering with margin $\delta$}}) for maps defined on the
whole ${\mathbb R}^N$ in order to apply his main
result also to the case of small perturbations of a given map.

\bigskip

\noindent
\begin{theorem}\label{th-2b.z} (see \cite[Theorem 1, p.1042]{Zg-01})
Suppose we have a family of $N-$dimensional rectangles ${\mathcal
R}_l:= \prod_{i=1}^N [a_i^{(l)},b_i^{(l)}]$ and a family of
continuous maps $\psi_l: {\mathcal R}_l\to {\mathbb R}^N,$ for
$l=0,\dots,m -1.$ Assume there exists a finite sequence of indexes
$1\leq i_1 < i_2 <\dots < i_k \leq N,$ such that for $l=0,\dots,m
-1,$ ${\mathcal R}_l$ $\psi_l-$covers ${\mathcal R}_{l+1}$
$(\mbox{mod }m)$ in $(i_1,i_2,\dots,i_k)-$direction. Then there
exists $w\in {\mathcal R}_0$ such that
\begin{itemize}
\bigskip
\item[] $\psi_l\circ\psi_{l-1}\circ\dots\circ \psi_0(w) \in
{\mathcal R}_{l+1}\,,\quad$
for $ l= 0,1,\dots, m-2\,;$\\
\item[] $\psi_{m -1}\circ\psi_{m -2}\circ\dots\circ \psi_0(w) =
w.$
\end{itemize}
\end{theorem}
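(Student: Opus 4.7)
The plan is to reduce the statement to a single application of the Poincar\'e--Miranda theorem on the product cube $(I^N)^m$, iterating the normalization trick used in the proof of Corollary \ref{cor-2b.2}. First, for each $l=0,\dots,m-1$ let $h_l:I^N\to\mathcal{R}_l$ be the affine homeomorphism given componentwise by \eqref{eq-2b.1}, and set $\phi_l:=h_{l+1}^{-1}\circ\psi_l\circ h_l$ with indices modulo $m$, so that $\phi_l:I^N\to\mathbb{R}^N$ is continuous. Exactly as in the proof of Corollary \ref{cor-2b.2}, the $\psi_l$-covering hypothesis translates into the following: $(\phi_l)_j(I^N)\subseteq[0,1]$ for every $j\notin\{i_1,\dots,i_k\}$, whereas for every $j\in\{i_1,\dots,i_k\}$ one has either $(\phi_l)_j\leq 0$ on $[x_j=0]$ and $(\phi_l)_j\geq 1$ on $[x_j=1]$, or the same with the two faces interchanged. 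I will describe only the first orientation, the other being symmetric.

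Next, I would define the continuous map $F:(I^N)^m\to\mathbb{R}^{Nm}$ by
\[
F(\xi^{(0)},\dots,\xi^{(m-1)})_l\;:=\;\phi_l(\xi^{(l)})-\xi^{(l+1)},\qquad l=0,\dots,m-1,
\]
with the cyclic convention $\xi^{(m)}:=\xi^{(0)}$. A zero $(\xi^{(0)}_*,\dots,\xi^{(m-1)}_*)$ of $F$ is precisely an orbit segment: $\xi^{(l+1)}_*=\phi_l(\xi^{(l)}_*)\in I^N$ for $l=0,\dots,m-2$ and $\phi_{m-1}(\xi^{(m-1)}_*)=\xi^{(0)}_*$. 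Setting $w:=h_0(\xi^{(0)}_*)\in\mathcal{R}_0$ then yields the desired conclusion, since $h_{l+1}(\xi^{(l+1)}_*)=\psi_l\circ\cdots\circ\psi_0(w)\in\mathcal{R}_{l+1}$ for $l<m-1$ and the loop closes at $l=m-1$.

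The core of the argument will be producing a zero of $F$ via Poincar\'e--Miranda on the $Nm$-dimensional cube. Write the scalar components as $F_{l,i}:=(\phi_l)_i(\xi^{(l)})-\xi^{(l+1)}_i$ and pair each cube coordinate $\xi^{(l)}_i$ with a component of $F$ as follows: $\xi^{(l)}_i$ is matched with $F_{l,i}$ when $i\in\{i_1,\dots,i_k\}$ (expansive direction), and with $F_{l-1,i}$ when $i\notin\{i_1,\dots,i_k\}$ (contractive direction). This is a bijection between the $Nm$ cube variables and the $Nm$ scalar components of $F$. On a contractive pairing, $F_{l-1,i}=(\phi_{l-1})_i(\xi^{(l-1)})-\xi^{(l)}_i$ ranges in $[0,1]-\{\xi^{(l)}_i\}$, so $F_{l-1,i}\geq 0$ on $[\xi^{(l)}_i=0]$ and $F_{l-1,i}\leq 0$ on $[\xi^{(l)}_i=1]$. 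On an expansive pairing, $\xi^{(l)}_i=0$ forces $(\phi_l)_i\leq 0$ and hence $F_{l,i}\leq 0$, while $\xi^{(l)}_i=1$ forces $(\phi_l)_i\geq 1\geq\xi^{(l+1)}_i$ and hence $F_{l,i}\geq 0$, independently of the value of $\xi^{(l+1)}_i$. After permuting components of $F$ so that each $F_{l,i}$ occupies the slot indexed by its paired variable, Theorem \ref{th-2b.1} applies and yields a zero of $F$.

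The principal obstacle I expect is the combinatorial bookkeeping of this pairing and the verification that, on each of the $2Nm$ faces of $(I^N)^m$, the resulting signs are compatible with the orientation required by Theorem \ref{th-2b.1}. A stylistically cleaner alternative, closer in spirit to Section \ref{sec-2}, is to invoke Theorem \ref{th-2b.2} on the generalized $Nm$-rectangle $(I^N)^m$: for each coordinate $(l,i)$, Bolzano's theorem together with Lemma \ref{lem-ABC.1} shows that the zero set of the component of $F$ matched to that coordinate is a compact set cutting the arcs between the two opposite $(l,i)$-faces, and Theorem \ref{th-2b.2} then directly produces a point common to all $Nm$ such cutting sets, i.e.\ a zero of $F$.
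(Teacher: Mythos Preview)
Your argument is correct and takes a genuinely different route from the paper. The paper does not prove Theorem~\ref{th-2b.z} directly; it is stated as a result of Zgliczy\'nski and then recovered as a special case of the stronger Theorem~\ref{th-2b.4} (via Remark~\ref{rem-2b.kz}). The proof of Theorem~\ref{th-2b.4} stays in $\mathbb{R}^N$: the maps $\psi_l$ are extended through projections onto the target rectangles, auxiliary sets $W_j$ are built to record when the successive images actually land in the prescribed rectangles, and Corollary~\ref{cor-2b.3} (hence Theorem~\ref{th-2b.2}) is applied to the single composite map $\psi_{m-1}\circ\dots\circ\psi_0$ on $\mathcal{R}_0$. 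Your approach instead lifts the problem to the product cube $(I^N)^m$ and applies Poincar\'e--Miranda in one stroke; the combinatorial pairing you set up (expansive coordinate $\xi^{(l)}_i\leftrightarrow F_{l,i}$, contractive coordinate $\xi^{(l)}_i\leftrightarrow F_{l-1,i}$) is indeed a bijection, since for each fixed $i$ it is either the identity or the cyclic shift $l\mapsto l-1$ on $\mathbb{Z}/m\mathbb{Z}$, and the face sign checks you wrote out are correct, including for the mixed orientations allowed by Theorem~\ref{th-2b.1}. This is more direct for Theorem~\ref{th-2b.z} itself and sidesteps the cutting-surface machinery entirely. The trade-off is that the product-space argument relies on the pointwise face inequalities coming from Definition~\ref{def-2b.2}, so it does not immediately extend to the weaker ``covering along the paths'' hypothesis of Definition~\ref{def-2b.3} and Theorem~\ref{th-2b.4}, which is the paper's real goal; the paper's $N$-dimensional composite-map argument handles both versions uniformly.
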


\noindent
Our goal is to obtain a result which is closely
related to Theorem \ref{th-2b.z}, but exploiting an expansive
condition along the paths like in Corollary
\ref{cor-2b.1} and in \cite{Ka-00}.
To this purpose we first introduce the following definition:

\begin{definition}\label{def-2b.3}
Assume we have two $N-$dimensional rectangles
${\mathcal R}_1:= \prod_{i=1}^N [a_i,b_i]$ and
${\mathcal R}_2:= \prod_{i=1}^N [c_i,d_i]$
and let $\psi: {\mathbb R}^N \supseteq{\mathcal R}_1\to {\mathbb R}^N$ be a
continuous map.
Let $1\leq i_1 < i_2 <\dots < i_k \leq N$ be a finite sequence of indexes.
We say that {\textit{${\mathcal R}_1$ $\psi-$covers ${\mathcal R}_2$ in
$(i_1,i_2,\dots,i_k)-$direction along the paths}} if the following conditions hold:
\begin{itemize}
\item{}\:
for every $j\in J:=\{i_1,i_2,\dots,i_k\}$ and
every continuous path $\gamma=(\gamma_1,\dots,\gamma_N):[0,1]\to {\mathcal R}_1$
satisfying $\gamma_j(0) = a_j$ and $\gamma_j(1) =b_j\,,$ it holds that
$\psi_j({\overline{\gamma}})\supseteq [c_j,d_j]\,;$\\
\item{}\;
for every $j\not\in J,$
$\psi_j({\mathcal R}_1)\subseteq [c_j,d_j].$
\end{itemize}
\end{definition}

\begin{remark}\label{rem-2b.kz}
We observe that Definition \ref{def-2b.2} and Definition \ref{def-2b.3}
coincide in the one$-$dimensional case, while they differ for $N\geq 2$
if $J\not=\emptyset$ (that is, if at least one expansive direction is present).
To be more precise, it is straightforward to verify that any map $\psi$
satisfying Definition \ref{def-2b.2}, with respect to a pair of
$N-$dimensional rectangles, fulfills also Definition \ref{def-2b.3}.
On the other hand, we give an example (see Example \ref{ex-2b.1} below)
of a two$-$dimensional map
which satisfies the latter definition, but not the former.
This shows that, in principle, Corollary \ref{cor-2b.1} is more general than
Corollary \ref{cor-2b.2}.\hfill$\lhd$
\end{remark}

\begin{example}\label{ex-2b.1}
Let $\phi = (f,g): {\mathbb R}^2 \to {\mathbb R}^2$ be the continuous map defined by
$$
\begin{array}{ll}
{\displaystyle{
f(x,y):= \frac{1}{2} + c \cos( 2\pi (k x + \ell (y - \tfrac{1}{2}) ) ),}}\\
\\
{\displaystyle{g(x,y):= \frac{1}{2} + d \sin(2\pi (y + m x) ),}}
\end{array}
$$
where $c,d \in {\mathbb R}$ and $k,l,m\in {\mathbb N}$ are chosen in order to satisfy
$$0 < d \leq \frac{1}{2} < c,\;\; \ell > \frac{1}{d}\,,\;\; k \geq  \ell +1,\;\; m\geq 1.$$
By the above positions, we see immediately that
$$g(x,y)\in \Bigl [ \frac 1 2 - d, \frac 1 2 + d\Bigr] \subseteq [0,1],\;\;\forall\, (x,y)\in I^2,$$
so that $\phi$ is contractive in the second component.
We prove now that $\phi$ is expansive (along the paths) with respect to the first component.
More precisely, we check that, for $J=\{1\},$ $I^2$ $\phi-$covers $I^2$ in the
$x-$direction along the paths.
\\
Let $\gamma = (\gamma_1,\gamma_2): [0,1]\to I^2$ be a continuous map such that
$\gamma_1(0) = 0$ and $\gamma_1(1)= 1.$ We claim that for
$F(t):= f(\gamma_1(t),\gamma_2(t)),$
$$F([0,1]) \supseteq [0,1]$$
follows. In fact, the set $\{2\pi k\gamma_1(t): \, t\in [0,1]\}$ coincides with the interval
$[0,2\pi k],$ while the set $\{2\pi \ell\gamma_2(t) - \pi \ell: \, t\in [0,1]\}$
is contained in the interval
$[-\pi\ell,\pi \ell],$ so that the set
$\{2\pi (k \gamma_1(t) + \ell (\gamma_2(t) - \tfrac{1}{2}))\,:\,
t\in [0,1]\}$ contains the interval $[\pi \ell, 2\pi k -\pi\ell]$
whose length is at least $2\pi.$
Therefore,
$F([0,1])\supseteq \bigl[ \frac 1 2 - c, \frac 1 2 + c\bigr] \supseteq [0,1]$
and thus we have proved that $\phi$ agrees with Definition \ref{def-2b.3}.
\\
On the other hand, it is not true that $I^2$ $\phi-$covers $I^2$ in the
$x-$direction.
In fact, for $x=0$ or $x=1,$
$$
[0,1]\nsubseteq
\Bigl[\,\max_{(0,\,y)\in I^2} f(0,y),
\min_{(1,\,y)\in I^2} f(1,y)\,\Bigr],\quad
[0,1]\nsubseteq
\Bigl[\,\max_{(1,\,y)\in I^2} f(1,y),
\min_{(0,\,y)\in I^2} f(0,y)\,\Bigr].
$$
In fact, it is not difficult to prove even more, that is, for every ${\hat{x}} \in [0,1]$
the set
$\{ f({\hat{x}},y): \, y\in [ \tfrac 1 2 - d, \tfrac 1 2 + d ]\,\}$
covers $[0,1]$ and therefore, it cannot lie at the left or at the right of the
interval $[0,1].$ More precisely, for every ${\hat{x}} \in [0,1]$ it holds that:
$$
\begin{array}{ll}
\min \{f({\hat{x}},y): \, y\in [ \tfrac 1 2 - d, \tfrac 1 2 + d ]\,\}= \frac 1 2 - c<0,\\
\\
\max \{f({\hat{x}},y): \, y\in [ \tfrac 1 2 - d, \tfrac 1 2 + d ]\,\} = \frac 1 2 + c>1.
\end{array}$$
This shows that there is no sub$-$rectangle of the form
${\mathcal R}:=[a,b]\times [ \tfrac 1 2 - d, \tfrac 1 2 + d ]$ such that
${\mathcal R}$ $\phi-$covers $I^2$ in the $x-$direction. \hfill$\lhd$
\end{example}

\noindent
Next, we provide an improvement of Corollary \ref{cor-2b.1} which will be our main tool
in the proof of Theorem \ref{th-2b.4} below.

\begin{corollary}\label{cor-2b.3}
Let ${\mathcal R}:= \prod_{i=1}^N [a_i,b_i]$ be an $N-$dimensional rectangle
with opposite $i-$faces
$${\mathcal R}^{\ell}_i := \{x\in {\mathcal R}: \, x_i = a_i\},\quad
{\mathcal R}^{r}_i := \{x\in {\mathcal R}: \, x_i = b_i\}$$
and let
$\phi=(\phi_1,\dots,\phi_N): {\mathbb R}^N\supseteq {\mathcal R} \to {\mathbb R}^N$
be a continuous map.
Suppose there exists $J\subseteq \{1,\dots,N\}$ such that
the following conditions hold:\\
\begin{itemize}
\item[$(C)\;$] $\phi_{j}({\mathcal R}^{\ell}_j\cup {\mathcal
R}^{r}_j)
\subseteq [a_j,b_j],\;\;\forall\, j\not\in J;$\\
\item[$(E_{W})\;$]
for each $j\in J$ there is a (nonempty) compact set $W_j\subseteq {\mathcal R}$
such that for
every continuous path $\gamma=(\gamma_1,\dots,\gamma_N):[0,1]\to {\mathcal R}$
satisfying $\gamma_j(0) = a_j$ and $\gamma_j(1) =b_j\,,$ there exists a sub$-$path
$\sigma$ with ${\overline{\sigma}}\subseteq W_j$ and
$\phi_j({\overline{\sigma}})\supseteq [a_j,b_j].$\\
\end{itemize}
Then $\phi$ has at least a fixed point in $W:= \bigcap_{j\in J} W_j\,.$
{\footnote{$\,$ The role of $W$ in the theorem is meaningful only if $(E_{W})$ holds
for at least one component $j.$ Otherwise, if $(C)$ holds for every $j=1,\dots,N,$
we read the theorem for $W={\mathcal R}.$}}
\end{corollary}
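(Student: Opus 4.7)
My plan is to apply Theorem \ref{th-2b.2} to the generalized rectangle ${\mathcal R}$, viewed via the obvious affine homeomorphism $h$ of \eqref{eq-2b.1} between $I^N$ and ${\mathcal R}$. For each $i \in \{1,\ldots,N\}$ I will construct a compact set ${\mathcal S}_i \subseteq {\mathcal R}$ cutting the arcs between the opposite faces ${\mathcal R}^{\ell}_i$ and ${\mathcal R}^{r}_i$, in such a way that every $\bar x \in \bigcap_{i=1}^N {\mathcal S}_i$ is a fixed point of $\phi$ lying in $W$. The natural choice is
\[
{\mathcal S}_i := \{x \in {\mathcal R} : \phi_i(x) = x_i\} \quad\text{if } i \notin J,
\qquad
{\mathcal S}_j := \{x \in W_j : \phi_j(x) = x_j\} \quad\text{if } j \in J.
\]
Each set is clearly closed in ${\mathcal R}$ (respectively in $W_j$) by continuity of $\phi$, and thus compact, since ${\mathcal R}$ and $W_j$ are.

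For the contractive indices $i \notin J$, the cutting property is an immediate Bolzano argument analogous to the one in the proof of Corollary \ref{cor-2b.1}. Indeed, given any continuous $\gamma : [0,1] \to {\mathcal R}$ with $\gamma(0) \in {\mathcal R}^{\ell}_i$ and $\gamma(1) \in {\mathcal R}^{r}_i$, condition $(C)$ yields $\phi_i(\gamma(0)) - \gamma_i(0) = \phi_i(\gamma(0)) - a_i \geq 0$ and $\phi_i(\gamma(1)) - \gamma_i(1) = \phi_i(\gamma(1)) - b_i \leq 0$, so the intermediate value theorem gives a $t^\ast \in [0,1]$ with $\gamma(t^\ast) \in {\mathcal S}_i$.

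For an expansive index $j \in J$, I exploit $(E_W)$: given the same path $\gamma$, there is a sub-path $\sigma = \gamma|_{[c,d]}$ with $\overline{\sigma} \subseteq W_j$ and $\phi_j(\overline{\sigma}) \supseteq [a_j,b_j]$. Hence one can pick $t_1, t_2 \in [c,d]$ with $\phi_j(\gamma(t_1)) = a_j$ and $\phi_j(\gamma(t_2)) = b_j$, so that the continuous function $t \mapsto \phi_j(\gamma(t)) - \gamma_j(t)$ takes values of opposite sign at $t_1$ and $t_2$ (since $\gamma_j(t) \in [a_j,b_j]$). Applying Bolzano on the interval with endpoints $t_1, t_2$ — which sits inside $[c,d]$ — furnishes $t^\ast$ with $\phi_j(\gamma(t^\ast)) = \gamma_j(t^\ast)$ and $\gamma(t^\ast) \in \overline{\sigma} \subseteq W_j$, i.e.\ $\gamma(t^\ast) \in {\mathcal S}_j$. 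This is the key step I anticipate as the main obstacle, because it is where one must carefully insist that the crossing of $\phi_j - \mathrm{id}_j$ occurs inside $W_j$ and not merely somewhere along $\gamma$.

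Once the cutting property is established for each $i$, Theorem \ref{th-2b.2} applied to the generalized rectangle $({\mathcal R}, h)$ yields a point $\bar x \in \bigcap_{i=1}^N {\mathcal S}_i$. By the very definition of the ${\mathcal S}_i$, this point satisfies $\phi_i(\bar x) = \bar x_i$ for every $i$, hence $\phi(\bar x) = \bar x$, and moreover $\bar x \in W_j$ for every $j \in J$, so $\bar x \in W = \bigcap_{j \in J} W_j$, as required. The footnote case in which $J = \emptyset$ is automatic from the argument, with $W = {\mathcal R}$.
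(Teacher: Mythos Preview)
Your proof is correct and follows essentially the same approach as the paper: you define the same sets ${\mathcal S}_i$, verify the cutting property via Bolzano exactly as the paper does (distinguishing the contractive and expansive cases and, in the latter, ensuring the zero of $\phi_j-\mathrm{id}_j$ is caught inside $W_j$ by working on the sub-path interval), and conclude via Theorem~\ref{th-2b.2}.
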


\begin{proof}
We follow the argument already described along the proof of Corollary \ref{cor-2b.1}.
We also assume that there exists at least one $j\in\{1,\dots,N\}$
for which $(E_{W})$ is satisfied (otherwise the result follows directly from
Rothe fixed point theorem).
\\
For every $j\not\in J,$ we define
$${\mathcal S}_j:= \{x=(x_1,\dots,x_N)\in {\mathcal R}:\, x_j = \phi_j(x)\},$$
while, for $j\in J,$ we set
$${\mathcal S}_j:= \{x=(x_1,\dots,x_N)\in W_j\,:\, x_j = \phi_j(x)\}.$$
Let $j\in \{1,\dots,N\}$ be fixed and let $\gamma: [0,1]\to
{\mathcal R}$ be a continuous map such that $\gamma(0)\in
{\mathcal R}^{\ell}_j$ and $\gamma(1)\in {\mathcal R}^{r}_j.$

If $j$ is a contractive direction, so that $(C)$ holds,
by repeating exactly the same argument as in the
proof of Corollary \ref{cor-2b.1}, we find that there exists
$t^*\in [0,1]$
such that $\phi_j(\gamma(t^*)) = \gamma_j(t^*),$
that is $\gamma(t^*)\in {\mathcal S}_j\,.$

On the other hand, if $j$ is an expansive direction so that
$(E_{W})$ holds, there exist $0\leq t_1 < t_2\leq 1$
such that
$\gamma(t)\in W_j\,,$ for every $t\in [t_1,t_2]$ and,
moreover,
$\phi_j(\gamma(t_1)) = a_j \leq \gamma_j(t_1)$ as well as
$\phi_j(\gamma(t_2)) = b_j \geq \gamma_j(t_2)$
(or $\phi_j(\gamma(t_1)) = b_j \geq \gamma_j(t_1)$ as well as
$\phi_j(\gamma(t_2)) = a_j \leq \gamma_j(t_2)$).
Bolzano theorem ensures the existence of ${\tilde{t}}\in [t_1,t_2]$
such that $\phi_j(\gamma({\tilde{t}})) = \gamma_j({\tilde{t}})$
and also $\gamma({\tilde{t}})\in W_j\,.$ Hence,
$\gamma({\tilde{t}})\in {\mathcal S}_j\,.$

The assumptions of Theorem \ref{th-2b.2} are thus satisfied
with respect to $X = {\mathcal R}$ and $h$
defined as in \eqref{eq-2b.1}.
Therefore,
$\bigcap_{i=1}^N {\mathcal S}_i \not=\emptyset.$
By definition, any point
${\bar{x}}\in \bigcap_{i=1}^N {\mathcal S}_i$ is such that $\phi({\bar{x}}) = {\bar{x}}.$
Moreover ${\bar{x}}\in W.$
\end{proof}

\begin{remark}\label{rem-2b.2}
Clearly, Corollary \ref{cor-2b.3} is still true if we replace hypothesis $(C)$ with
\begin{itemize}
\item[$(C')\;$]
$\phi_{j}({\mathcal R})
\subseteq [a_j,b_j],\;\;\forall\, j\not\in J.$
\end{itemize}
This assumption (which is slightly more restrictive than $(C)$\,)
is crucial when we consider compositions of maps.\hfill$\lhd$
\end{remark}

We are now in position to prove our extension of Theorem \ref{th-2b.z}.
Along the proof, we use the following notation:
\\
Let $\alpha,\beta\in {\mathbb R}$ with $\alpha < \beta.$ We denote by $\eta_{\,[\alpha,\beta]}$
the (continuous) projection of ${\mathbb R}$ onto the interval $[\alpha,\beta],$ defined by
\begin{equation}\label{eq-2b.e}
\eta_{\,[\alpha,\beta]}(s):= \max\{\alpha,\min\{s,\beta\}\,\}.
\end{equation}

\noindent
\begin{theorem}\label{th-2b.4}
Suppose we have a family of $N-$dimensional rectangles ${\mathcal
R}_l:= \prod_{i=1}^N [a_i^{(l)},b_i^{(l)}]$ and a family of
continuous maps $\psi_l: {\mathcal R}_l\to {\mathbb R}^N,$ for
$l=0,\dots,m -1.$ Assume there exists a finite sequence of indexes
$1\leq i_1 < i_2 <\dots < i_k \leq N,$ such that for $l=0,\dots,m
-1,$ ${\mathcal R}_l$ $\psi_l-$covers ${\mathcal R}_{l+1}$
$(\mbox{mod }m)$ in $(i_1,i_2,\dots,i_k)-$direction along the
paths. Then there exists $w\in {\mathcal R}_0$ such that
\begin{eqnarray}
&~&\psi_l\circ\psi_{l-1}\circ\dots\circ \psi_0(w) \in {\mathcal
R}_{l+1}\,,\;\;
\mbox{for } l= 0,1,\dots, m -2\,;\label{eq-2b.2}\\
&~&\psi_{m -1}\circ\psi_{m -2}\circ\dots\circ \psi_0(w) =
w.\label{eq-2b.3}
\end{eqnarray}
\end{theorem}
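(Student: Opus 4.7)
The plan is to lift the problem to the $Nm$-dimensional generalized rectangle
$$Z := \prod_{l=0}^{m-1}{\mathcal R}_l \;\subseteq\; {\mathbb R}^{Nm},$$
so that any orbit $(w,\psi_0(w),\psi_1\circ\psi_0(w),\ldots,\psi_{m-2}\circ\cdots\circ\psi_0(w))$ satisfying \eqref{eq-2b.2}--\eqref{eq-2b.3} corresponds to a single point $z^*=(z_0^*,\ldots,z_{m-1}^*)\in Z$ solving the $Nm$ scalar equations $\psi_{l,i}(z_l)=z_{(l+1)\bmod m,\,i}$, indexed by $l\in\{0,\ldots,m-1\}$ and $i\in\{1,\ldots,N\}$. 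Each such equation will be interpreted as defining a cutting surface in $Z$, and Theorem \ref{th-2b.2} (in dimension $Nm$) will then deliver the desired intersection point.

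For each pair $(l,i)$, set
$$S_{l,i}:=\{\,z\in Z:\;\psi_{l,i}(z_l)-z_{(l+1)\bmod m,\,i}=0\,\},$$
a closed and hence compact subset of $Z$. I will show that each $S_{l,i}$ cuts the arcs in $Z$ between one specific pair of opposite faces. If $i\notin J:=\{i_1,\ldots,i_k\}$, the contraction clause of Definition \ref{def-2b.3} yields $\psi_{l,i}({\mathcal R}_l)\subseteq[a_i^{(l+1)},b_i^{(l+1)}]$, so the scalar function $z\mapsto\psi_{l,i}(z_l)-z_{(l+1)\bmod m,\,i}$ is nonnegative on the face $[z_{(l+1)\bmod m,\,i}=a_i^{(l+1)}]$ and nonpositive on $[z_{(l+1)\bmod m,\,i}=b_i^{(l+1)}]$; Bolzano's theorem then yields a zero along every path connecting these two faces, so $S_{l,i}$ cuts in the coordinate direction $((l+1)\bmod m,\,i)$. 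If instead $i\in J$, pick any path $\zeta=(\zeta_0,\ldots,\zeta_{m-1})$ in $Z$ joining the faces $[z_{l,i}=a_i^{(l)}]$ and $[z_{l,i}=b_i^{(l)}]$; the block $\zeta_l$ is a path in ${\mathcal R}_l$ whose $i$-th component runs from $a_i^{(l)}$ to $b_i^{(l)}$, and the path-covering hypothesis forces $\psi_{l,i}(\overline{\zeta_l})\supseteq[a_i^{(l+1)},b_i^{(l+1)}]$, while $\zeta_{(l+1)\bmod m,\,i}(t)$ remains in the same interval. An IVT argument on the difference $\psi_{l,i}(\zeta_l(t))-\zeta_{(l+1)\bmod m,\,i}(t)$ again produces a zero, so $S_{l,i}$ cuts in the coordinate direction $(l,i)$.

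The assignment just described is a bijection between the $Nm$ cutting sets and the $Nm$ coordinate directions of $Z$: the direction $(l,i)$ with $i\in J$ is handled by $S_{l,i}$, whereas the direction $(l,i)$ with $i\notin J$ is handled by $S_{(l-1)\bmod m,\,i}$. The two cases are disjoint and jointly exhaust all pairs. Viewing $Z$ as an $Nm$-dimensional generalized rectangle through the obvious affine homeomorphism from $I^{Nm}$, Theorem \ref{th-2b.2} applies and produces a point $z^*\in\bigcap_{l,i}S_{l,i}$. By construction $\psi_l(z_l^*)=z_{(l+1)\bmod m}^*$ for every $l$, so $w:=z_0^*\in{\mathcal R}_0$ satisfies both \eqref{eq-2b.2} and \eqref{eq-2b.3}.

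The main obstacle is conceptual rather than technical: one must recognize that the expansion condition imposed on $\psi_l$ yields a cut in the \emph{input} coordinate $(l,i)$, while the contraction condition on the same $\psi_l$ yields a cut in the \emph{output} coordinate $((l+1)\bmod m,\,i)$, and that these two prescriptions precisely distribute the $Nm$ covering hypotheses across all $Nm$ coordinate directions of $Z$ with neither overlap nor omission. Once this matching is in place, each individual cut is verified by a one-variable Bolzano argument and the conclusion follows from a single application of Theorem \ref{th-2b.2}.
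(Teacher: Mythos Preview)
Your proof is correct and takes a genuinely different route from the paper's argument. The paper stays in ${\mathbb R}^N$: it extends each $\psi_l$ to a map ${\widetilde\psi}_l:{\mathbb R}^N\to{\mathbb R}^N$ via componentwise projections, forms the composite $\phi={\widetilde\psi}_{m-1}\circ\cdots\circ{\widetilde\psi}_0$ on ${\mathcal R}_0$, and then invokes Corollary~\ref{cor-2b.3}. The verification of hypothesis $(E_W)$ for each expansive index $j\in J$ requires an inductive construction of nested sub-paths $\sigma_0,\sigma_1,\ldots$ so that the successive images stay inside the target intervals at every stage; the sets $W_j$ record exactly this information, and the fixed point is then located in $W=\bigcap_{j\in J}W_j$.

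Your lifting to $Z=\prod_l{\mathcal R}_l\subseteq{\mathbb R}^{Nm}$ replaces all of this by a single call to Theorem~\ref{th-2b.2} in dimension $Nm$. The essential observation---that the expansive condition on $\psi_l$ yields a cut in the \emph{input} block coordinate $(l,i)$ while the contractive condition yields a cut in the \emph{output} block coordinate $((l{+}1)\bmod m,i)$, and that these two cases partition the $Nm$ coordinate directions bijectively---is exactly what makes the argument work, and it sidesteps both the extension trick and the inductive sub-path tracking. What the paper's approach buys is that it remains in the original dimension and explicitly localises the fixed point in the set $W$ (which can be useful for further refinements, cf.\ the role of the ${\mathcal K}$-sets in Section~\ref{sec-3c}); what your approach buys is conceptual transparency and a shorter, extension-free argument that needs only Theorem~\ref{th-2b.2} and one Bolzano step per coordinate.
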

\begin{proof}
We set
$$J:= \{i_1,\dots,i_k\}.$$
If $J=\emptyset,$ it follows that $\psi_l({\mathcal
R}_l)\subseteq {\mathcal R}_{l+1},$ for $l=0,\dots,m -1$
$(\mbox{mod }m),$ and the result is an immediate consequence of the
Brouwer fixed point theorem. Thus, for the remainder of the proof, we
assume $J\not=\emptyset.$

First of all, we extend the map $\psi_l =
(\psi^{(l)}_1,\dots,\psi^{(l)}_N)$ to a continuous mapping
$${\widetilde{\psi}}_l = ({\widetilde{\psi}\,}^{(l)}_1,\dots,{\widetilde{\psi}\,}^{(l)}_N)
: {\mathbb R}^N \to {\mathbb R}^N,$$
defined by
$${\widetilde{\psi}\,}^{(l)}_{i}(x):= \eta_{\,[a_i^{(l+1)},\, b_i^{(l+1)}]}(\psi^{(l)}_i(
{P_{{\mathcal R}_{l}}}(x)
)),
\quad \forall\, i= 1,\dots,N,$$
where we have called ${P_{{\mathcal R}_{l}}}$ the projection of ${\mathbb R}^N$ onto the
rectangle ${\mathcal R}_{l}\,,$ given by
$${P_{{\mathcal R}_{l}}}(x):= \bigl(
\eta_{\,[a_1^{(l)},\, b_1^{(l)}]}(x_1),\dots, \eta_{\,[a_N^{(l)},\, b_N^{(l)}]}(x_N)
\bigr).$$
Then, for every $j\in J,$ we define
$W_j$ as the set of the points
$x\in {\mathcal R}_0$ satisfying the following conditions:
$$
\begin{array}{ll}
\psi^{(0)}_j(x)\in [a_j^{(1)},\, b_j^{(1)}],\\
\\
\psi^{(1)}_j(\psi_0(x))\in [a_j^{(2)},\, b_j^{(2)}],\\
$~$\qquad\;\vdots\\
\psi^{(m - 2)}_j(\psi_{m - 3}\circ\dots\circ\psi_0)(x)
\in [a_j^{(m -1)},\, b_j^{(m -1)}],\\
\\
\psi^{(m - 1)}_j(\psi_{m - 2}\circ\psi_{m -
3}\circ\dots\circ\psi_0)(x)
\in [a_j^{(0)},\, b_j^{(0)}].\\
\end{array}
$$
We are going to apply Corollary \ref{cor-2b.3} (in the version of Remark
\ref{rem-2b.2}) to the composite map
$$\phi=(\phi_1,\dots,\phi_N):= {\widetilde{\psi}}_{m -1}\circ{\widetilde{\psi}}_{m -2}\circ\dots\circ
{\widetilde{\psi}}_0\,.$$
Notice that $\phi$ is well defined as a continuous map on ${\mathcal R}_0\,.$
Indeed, by the property of the projections $\eta_{\,[a_i^{(l)},\, b_i^{(l)}]}$
we have that
$${\widetilde{\psi}\,}_{l}({\mathcal R}_l)\subseteq {\mathcal R}_{l+1}\,,\;\forall\,
l= 0,\dots, m - 1.$$
As a preliminary observation we note that any
fixed point $w$ for $\phi,$ with $w\in W:= \bigcap_{j\in J}
W_j\,,$ satisfies the conditions \eqref{eq-2b.2} and
\eqref{eq-2b.3}. This follows from the fact that
$$\eta_{\,[a_i^{(l)},\, b_i^{(l)}]}(s) = s,\;\; \mbox{for } s \in [a_i^{(l)},\, b_i^{(l)}]$$
and that, when $j\not\in J,$
$${\widetilde{\psi}\,}^{(l)}_j(x) = \psi^{(l)}_j(x),\quad\forall\, x\in {\mathcal R}_l\,.$$

Let $j\in \{1,\dots,N\}$ be a fixed index. We distinguish two cases:
\\
$(a)\;$ $j\not\in J.\quad$ In this situation, for every
$l=0,\dots, m - 1,$ we have
$${\widetilde{\psi}\,}^{(l)}_j(x) \in [a_j^{(l+1)},\, b_j^{(l+1)}].$$
Therefore, $\phi$ satisfies $(C').$
\\
$(b)\;$ $j\in J.\quad$ Let $\gamma = (\gamma_1,\dots,\gamma_N): [0,1] \to{\mathcal R}_0$ be a continuous path
satisfying $\gamma_j(0) = a^{(0)}_j$ and $\gamma_j(1) = b^{(0)}_j\,.$ Our aim is to prove
that there exists a sub-path $\sigma$ of $\gamma$ with ${\overline{\sigma}}\subseteq W_j$
and such that $\phi_j({\overline{\sigma}})\supseteq [a^{(0)}_j,b^{(0)}_j].$
By the assumptions, we know that $\psi^{(0)}_j({\overline{\gamma}}) \supseteq [a^{(1)}_j,b^{(1)}_j].$
If we consider the continuous real valued function
$$g:[0,1]\to{\mathbb R},\quad
g(t):=\psi^{(0)}_j(\gamma(t)),$$
we can find $0\leq t_1 < t_2\leq 1$ such that $g(t)\in [a^{(1)}_j,b^{(1)}_j],$
for every $t\in [t_1,t_2]$ and, either $g(t_1) = a^{(1)}_j$ and
$g(t_2) = b^{(1)}_j\,,$ or
$g(t_1) = b^{(1)}_j$ and
$g(t_2) = a^{(1)}_j.$ The restriction $\gamma_0:= \gamma|_{[t_1,t_2]}$ is a sub-path
of $\gamma$ such that $\psi^{(0)}_j({\overline{{\gamma}_0}})= [a^{(1)}_j,b^{(1)}_j].$
Moreover, we also have that
$${\widetilde{\psi}\,}^{(0)}_j(\gamma_0(t)) = \psi^{(0)}_j(\gamma_0(t)),\;\;\forall\,
t\in [t_1,t_2].$$
If we like, we can take a continuous path $\sigma_0:[0,1]\to {\mathcal R}_0\,,$
in the same equivalence class of $\gamma_0\,,$ such that
$${\widetilde{\psi}\,}^{(0)}_j(\sigma_0(t)) = \psi^{(0)}_j(\sigma_0(t))\in
[a^{(1)}_j, b^{(1)}_j],\;\;\forall\,
t\in [0,1],$$
$$\psi^{(0)}_j(\sigma_0(0)) = a^{(1)}_j\,,\quad
\psi^{(0)}_j(\sigma_0(1)) = b^{(1)}_j\,.$$
Now, with respect to the path
$$\gamma_1:= {\widetilde{\psi}\,}_{0}\circ \sigma_0: [0,1]\to {\mathcal R}_1\,,$$
we are exactly in the same situation like we were with respect to the
path $\gamma: [0,1]\to {\mathcal R}_0\,.$

At this moment, we can proceed by induction, just repeating a finite number of
times the previous argument, until we find a sub-path $\sigma$ of $\gamma,$
with $\sigma: [0,1]\to {\mathcal R}_0$ satisfying the following conditions:
\begin{eqnarray*}
&~&{\widetilde{\psi}\,}^{(l)}_j({\widetilde{\psi}}_{l - 1}\circ\dots\circ
{\widetilde{\psi}}_0)(\sigma(t)) = \psi^{(l)}_j(\psi_{l -
1}\circ\dots\circ\psi_0)(\sigma(t)) \in [a_j^{(l+1)},\,
b_j^{(l+1)}],\\
&~&\qquad\qquad\qquad\;\;\forall\, t\in [0,1],\;\forall\, l= 0,\dots,m
-1\; (\mbox{mod }m),
\end{eqnarray*}
\vspace{-0.2cm}
$$\psi^{(m - 1)}_j(\psi_{m - 2}\circ\dots\circ\psi_0)(\sigma(0))
= a_j^{(0)},\;\; \psi^{(m - 1)}_j(\psi_{m -
2}\circ\dots\circ\psi_0)(\sigma(1)) = b_j^{(0)}\,.$$ Therefore
${\overline{\sigma}}\subseteq W_j$ and $\phi_j({\overline{\sigma}})=
[a^{(0)}_j,b^{(0)}_j],$ that is, $(E_{W})$ of Corollary
\ref{cor-2b.3} is satisfied.
\\
In this manner we have proved that for every $j\in \{1,\dots,N\}$
either $(C')$ or $(E_{W})$ is fulfilled with respect to $\phi.$
Hence, Corollary \ref{cor-2b.3} ensures the existence of at least a fixed point for $\psi$ in $W:= \bigcap_{j\in J}
W_j\,$ that, as already observed, satisfies conditions \eqref{eq-2b.2} and
\eqref{eq-2b.3}. The proof is complete.
\end{proof}

\section{Continua of fixed points for maps depending on parameters}\label{sec-3b}
In this section we still consider the intersection of generalized surfaces
which separate the opposite edges of an $N-$dimensional cube,
but in the case in which the number of the
cutting surfaces is smaller than the dimension of the space.
Here
our main tool is a result by
Fitzpatrick, Massab\'{o} and  Pejsachowicz (see \cite[Theorem 1.1]{FiMaPe-86})
on the covering dimension of the
zero set of an operator depending on parameters.
For the reader's convenience, we recall the concept of covering dimension
as can be found in \cite{En-78}.

\begin{definition}\label{def-3b.1}\cite[p.54, p.208]{En-78}
Let $Z$ be a metric space. We say that $\mbox{\rm dim \!} Z \leq n$
if every finite open cover of the space $Z$ has a finite open [closed]
refinement of order $\leq n.$
The object $\mbox{\rm dim \!} Z\in {\mathbb N}\cup\{\infty\}$
is called the \textit{covering dimension} or the
{\textit{\v Cech$-$Lebesgue dimension}} of the metric space $Z.$
According to \cite{FiMaPe-86}, if $z_0\in Z,$ we also say that
{\textit{$\mbox{\rm dim \!} Z \geq j$ at $z_0$}} if each neighborhood
of $z_0$ has dimension at least $j.$
\end{definition}

\noindent
The {\textit{order}} of a family ${\mathfrak A}$ of subsets of $Z$
(used in the above definition)
is the largest integer $n$ such that the family ${\mathfrak A}$ contains
$n+1$ sets with a non-empty intersection; if no such integer exists
we say that the family ${\mathfrak A}$ has order infinity.
By a classical result from Topology (see \cite[The coincidence theorem]{En-78})
the covering dimension coincides with the \textit{inductive dimension}
\cite[p.3]{En-78} for separable metric spaces.

\bigskip

We keep the notation of the previous section.
In particular, we recall that if
${\mathcal R}:= \prod_{i=1}^N [a_i,b_i]$ is an $N-$dimensional rectangle, we
denote its opposite $i-$faces by
$${\mathcal R}^{\ell}_i := \{x\in {\mathcal R}: \, x_i = a_i\},\quad
{\mathcal R}^{r}_i := \{x\in {\mathcal R}: \, x_i = b_i\}.$$

\begin{theorem}\label{th-3b.1}
Let ${\mathcal R}:= \prod_{i=1}^N [a_i,b_i]$ be an $N-$dimensional
rectangle and let $P = (p_1,\dots,p_N)$ be any interior point of
${\mathcal R}.$ Let $n\in \{1,\dots, N-1\}$ be fixed. Suppose that
$F = (F_1,\dots, F_n): {\mathcal R}\to {\mathbb R}^n$ is a
continuous mapping such that, for each $i\in\{1,\dots,n\},$
$$F_i(x) < 0,\;\forall\, x\in {\mathcal R}^{\ell}_i\quad\mbox{and }\;
F_i(x) > 0,\;\forall\, x\in {\mathcal R}^{r}_i$$
or
$$F_i(x) > 0,\;\forall\, x\in {\mathcal R}^{\ell}_i\quad\mbox{and }\;
F_i(x) < 0,\;\forall\, x\in {\mathcal R}^{r}_i\,.$$ Define also
the affine map
\begin{equation}\label{eq-3b.1}
\pi: {\mathbb R}^N\to {\mathbb R}^{N-n},\quad
\pi_j(x_1,\dots,x_{N}):= x_j - p_j\,,\; j = N-n + 1,\dots, N.
\end{equation}
Then there exists a connected subset ${\mathcal Z}$ of
$$F^{-1}(0) = \{x\in {\mathcal R}:\, F_i(x) =0,\,\forall\, i=1,\dots,n\}$$
whose dimension at each point is at least
$N-n.$
Moreover,
$$\mbox{\rm dim}({\mathcal Z}\cap\partial {\mathcal R})\geq N - n - 1$$
and
$$\pi: {\mathcal Z}\cap \partial {\mathcal R} \to {\mathbb R}^{N-n}\setminus\{0\}$$
is essential.
\end{theorem}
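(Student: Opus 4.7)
The plan is to apply the Fitzpatrick--Massab\`o--Pejsachowicz continuation theorem \cite[Theorem 1.1]{FiMaPe-86} after decomposing the coordinates of $\mathcal{R}$ into the $n$ ``principal'' directions $x_1,\dots,x_n$ on which the strict Poincar\'{e}--Miranda sign hypotheses act, and the remaining ``parameter'' directions that are recorded by $\pi$.

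First I would set $\mathcal{R}':= \prod_{i=1}^n [a_i,b_i]$ and $\Lambda := \prod_{j=n+1}^N [a_j,b_j]$, and view $F$ as a parametrized family $F(\cdot,\lambda) : \mathcal{R}' \to \mathbb{R}^n$ with $\lambda \in \Lambda$. The strict sign conditions on each face $\mathcal{R}_i^{\ell}$ and $\mathcal{R}_i^{r}$ (for $i \leq n$) imply that $F(\cdot,\lambda)$ has no zero on $\partial \mathcal{R}'$ and satisfies the hypotheses of Theorem \ref{th-2b.1} for every $\lambda$. A straight-line homotopy to the diagonal linear field $x \mapsto (x_i - p_i)_{i=1}^n$, which preserves these sign conditions throughout the deformation, then shows that the Brouwer degree $\deg(F(\cdot,\lambda), \mathrm{int}(\mathcal{R}'), 0) = \pm 1$, nonzero and independent of $\lambda$. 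This is precisely the admissibility and nondegeneracy input that \cite{FiMaPe-86} requires in order to propagate local zeros to a global continuum.

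Invoking \cite[Theorem 1.1]{FiMaPe-86} would then produce a connected subset $\mathcal{Z}\subseteq F^{-1}(0)$ whose covering dimension at each point is at least $\dim \Lambda = N-n$, whose boundary trace satisfies $\dim(\mathcal{Z} \cap \partial \mathcal{R}) \geq N - n - 1$, and on which the restriction of the parameter projection is essential. After the affine translation built into \eqref{eq-3b.1}, which sends $P$ to the origin of $\mathbb{R}^{N-n}$, this last assertion becomes exactly the statement that $\pi : \mathcal{Z}\cap \partial \mathcal{R} \to \mathbb{R}^{N-n}\setminus \{0\}$ is essential.

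The main obstacle will be carefully matching our finite-dimensional setup to the general framework of \cite{FiMaPe-86}, in particular translating their abstract notion of essentiality to the present statement about maps into $\mathbb{R}^{N-n}\setminus\{0\}$. A useful sanity check is that Theorem \ref{th-2b.1} applied to the enlarged field $(F,\pi):\mathcal{R}\to\mathbb{R}^N$ immediately yields $\pi^{-1}(0)\cap \mathcal{Z}\neq\emptyset$, which directly forbids any null-homotopy of $\pi|_{\mathcal{Z}\cap\partial \mathcal{R}}$ into $\mathbb{R}^{N-n}\setminus\{0\}$ (such a homotopy would extend to $\mathcal{Z}$ and produce a nonvanishing continuous extension, a contradiction). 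This reconfirms essentiality independently of the cited theorem and also clarifies the role of the \emph{strict} (rather than merely non-strict) sign conditions: they are what upgrade the mere existence statement of Theorem \ref{th-2b.1} to the degree-theoretic continuation yielding the dimensional lower bounds.
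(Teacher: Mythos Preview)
Your proposal is correct and follows essentially the same route as the paper: both arguments verify the degree hypothesis needed to invoke \cite[Theorem~1.1]{FiMaPe-86} and then apply that theorem directly. The paper's presentation is slightly more compact---it forms the combined map $H=(F,\pi):\mathcal{R}\to\mathbb{R}^N$ and observes that $\deg(H,\mathcal{R}^{\circ},0)=(-1)^d\neq 0$ (with $d$ the number of ``reversed'' sign components), which is precisely the statement that $\pi$ is a \emph{complementing map} for $F$ in the sense of \cite{FiMaPe-86}; your parametrized-degree computation $\deg(F(\cdot,\lambda),\mathrm{int}(\mathcal{R}'),0)=\pm1$ is equivalent to this via the standard reduction/product formula for the Brouwer degree, so no separate matching of frameworks is actually needed.
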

\begin{proof}
We define the continuous mapping
$$H = (F,\pi):{\mathcal R}\to {\mathbb R}^N.$$
By the assumptions on $F$ and $\pi$ we have
$$\mbox{\rm deg}(H, \,{\mathcal R}^{^{^{\!\!\!\!\!{o}}}}\,,0) = (-1)^d\not=0,$$
where $d$ is the number of components $i\in\{1,\dots,n\}$ such
that $F_i(x) > 0$ for $x \in {\mathcal R}^{\ell}_i$ (and also
$F_i(x) < 0$ for $x\in {\mathcal R}^{r}_i$ ). Hence $\pi$ turns
out to be a complementing map for $F$ (according to
\cite{FiMaPe-86}). A direct application of \cite[Theorem
1.1]{FiMaPe-86}  gives the thesis (note that the dimension $m$ in
\cite[Theorem 1.1]{FiMaPe-86} corresponds to our $N-n$).
\end{proof}

\begin{remark}\label{rem-3b.1}
If $a_i < 0 < b_i$ ($\forall\, i=1,\dots,N$) we can take $P=0,$ so
that the complementing map is just the projection $\pi: {\mathbb
R}^N\to {\mathbb R}^{N-n}\,.$ \hfill$\lhd$
\end{remark}

\noindent
For the reader's convenience, we recall that (according to the definitions in
\cite{FiMaPe-86}), given an open bounded set ${\mathcal O}\subseteq {\mathbb R}^N,$ a continuous map
$\pi: {\overline{\mathcal O}}\to {\mathbb R}^{N-n}$ is a complement for the continuous map
$F: {\overline{\mathcal O}}\to {\mathbb R}^{n}$ if
the topological degree
$\mbox{\rm deg}((\pi,F),{\mathcal O},0)$ is defined and nonzero.
We also recall (see \cite{HoYo-61}) that a mapping $f$ of a space $X$ into a space $Y$ is
said to be \textit{inessential} if $f$ is homotopic to a constant; otherwise $f$ is
\textit{essential}.

\bigskip

\bigskip

\noindent
A more elementary version of Lemma \ref{th-3b.1} can be
given for the zero set of a vector field with range in ${\mathbb R}^{N-1}\,.$
In this case we can achieve our result by a direct use of the classical
Leray$-$Schauder continuation theorem \cite{LeSh-34},
instead of the more sophisticated tools
in \cite{FiMaPe-86}.
Namely, we have:

\begin{theorem}\label{th-3b.1a}
Let ${\mathcal R}:= \prod_{i=1}^N [a_i,b_i]$ be an $N-$dimensional rectangle
and let
$F = (F_1,\dots, F_{N-1}): {\mathcal R}\to {\mathbb R}^{N-1}$
be a continuous mapping such that, for each $i\in\{1,\dots,N-1\},$
$$F_i(x) < 0,\;\forall\, x\in {\mathcal R}^{\ell}_i\quad\mbox{and }\;
F_i(x) > 0,\;\forall\, x\in {\mathcal R}^{r}_i$$
or
$$F_i(x) > 0,\;\forall\, x\in {\mathcal R}^{\ell}_i\quad\mbox{and }\;
F_i(x) < 0,\;\forall\, x\in {\mathcal R}^{r}_i\,.$$
Then there exists a closed connected subset ${\mathcal Z}$ of
$$F^{-1}(0) = \{x\in {\mathcal R}:\, F_i(x) =0,\,\forall\, i=1,\dots,N-1\}$$
such that
$${\mathcal Z}\cap {\mathcal R}^{\ell}_N\not=\emptyset,\quad
{\mathcal Z}\cap {\mathcal R}^{r}_N\not=\emptyset.$$
\end{theorem}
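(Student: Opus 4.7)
The plan is to single out the $N$-th coordinate as a continuation parameter and reduce the claim to the classical Leray--Schauder continuation theorem applied on the cross-sectional rectangle $\mathcal{R}':= \prod_{i=1}^{N-1}[a_i,b_i]$. Write $x=(x',x_N)$ with $x'\in \mathcal{R}'$, and set
\begin{equation*}
\Phi: \mathcal{R}'\times [0,1]\to {\mathbb R}^{N-1},\qquad
\Phi(x',\lambda):= F\bigl(x',\,a_N + \lambda(b_N - a_N)\bigr).
\end{equation*}
The hypothesis that each $F_i$ has opposite strict signs on $\mathcal{R}^\ell_i$ and $\mathcal{R}^r_i$ translates, for every $\lambda\in[0,1]$ and every $i=1,\dots,N-1$, into a corresponding strict sign condition for $\Phi_i$ on the opposite $i$-faces of $\mathcal{R}'$, uniformly in the parameter $\lambda$.

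In particular $\Phi(\cdot,\lambda)\neq 0$ on $\partial \mathcal{R}'$ for every $\lambda\in[0,1]$, so the Brouwer degree $d(\lambda):=\deg(\Phi(\cdot,\lambda),(\mathcal{R}')^{o},0)$ is well-defined, and by homotopy invariance it is independent of $\lambda$. The sign conditions are exactly those of the Poincar\'e--Miranda theorem (Theorem \ref{th-2b.1}) for the $(N-1)$-dimensional map $\Phi(\cdot,\lambda)$; a standard computation (e.g.\ linearly homotoping $\Phi(\cdot,\lambda)$ to the map $x'\mapsto (\pm(x_i-(a_i+b_i)/2))_i$ through $\Phi_\mu(x'):= (1-\mu)\Phi(x',\lambda) + \mu \cdot \mathrm{lin}(x')$, the homotopy staying admissible by the strict sign conditions on the faces) yields $d(\lambda)=\pm 1\neq 0$.

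With admissibility on $\partial\mathcal{R}'\times[0,1]$ and nontrivial degree at one (hence every) parameter value secured, the classical Leray--Schauder continuation theorem \cite{LeSh-34} produces a closed connected set
\begin{equation*}
\mathcal{C}\subseteq \Sigma := \{(x',\lambda)\in \mathcal{R}'\times[0,1]:\,\Phi(x',\lambda)=0\}
\end{equation*}
whose projection onto the $\lambda$-axis is the whole interval $[0,1]$. Pushing $\mathcal{C}$ forward through the homeomorphism $(x',\lambda)\mapsto (x',a_N+\lambda(b_N-a_N))\in\mathcal{R}$ yields the desired closed connected subset $\mathcal{Z}\subseteq F^{-1}(0)$; surjectivity of the $\lambda$-projection becomes surjectivity of the $x_N$-projection onto $[a_N,b_N]$, so $\mathcal{Z}$ contains at least one point with $x_N=a_N$ and one with $x_N=b_N$, giving $\mathcal{Z}\cap\mathcal{R}^\ell_N\neq\emptyset$ and $\mathcal{Z}\cap\mathcal{R}^r_N\neq\emptyset$.

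The only delicate point is invoking the right version of the continuation theorem: one needs the formulation that asserts the existence of a connected branch of zeros projecting onto all of $[0,1]$, rather than merely the existence of a zero at each parameter value. Since $\mathcal{R}'$ is compact and admissibility holds uniformly on $\partial \mathcal{R}'\times[0,1]$, the classical statement of \cite{LeSh-34} is enough, and the more refined machinery of \cite{FiMaPe-86} used in Theorem \ref{th-3b.1} is not required here.
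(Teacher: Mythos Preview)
Your proof is correct and essentially identical to the paper's: both single out the $N$-th coordinate as a parameter, verify that the Brouwer degree of the resulting parameter-dependent $(N-1)$-dimensional field on the cross-section is $\pm 1$ (hence nonzero) for every parameter value, and then invoke the Leray--Schauder continuation theorem to produce a closed connected branch of zeros projecting onto the full parameter interval. The only cosmetic difference is your normalization of the parameter interval to $[0,1]$ rather than working directly on $[a_N,b_N]$.
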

\begin{proof}
We split $x = (x_1,\dots,x_{N-1},x_N)\in {\mathcal R}\subseteq {\mathbb R}^N$ as
$x = (y,\lambda)$ with
$$y= (x_1,\dots,x_{N-1})
\in {\mathcal M}:= \prod_{i=1}^{N-1} [a_i,b_i],\quad
\lambda = x_N\in [a_N,b_N]$$
and define
$$f = f(y,\lambda): {\mathcal M}\times [a_N,b_N]\to {\mathbb R}^{N-1}\,,\quad
f(y,\lambda):= F(x_1,\dots,x_{N-1},\lambda),$$
treating, in this manner, the variable $x_N =\lambda$ as a parameter for the
$(N-1)-$dimensional vector field
$$f_{\lambda}(\cdot) = f(\cdot,\lambda).$$
By the assumptions on $F$ we have
$$\mbox{\rm deg}(f_{\lambda}, {\mathcal M}^{^{^{\!\!\!\!\!{o}}}},0) = (-1)^d\not=0,
\quad\forall\, \lambda\in [a_N,b_N],$$ where $d$ is the number of
the components $i\in\{1,\dots,N-1\}$ such that $F_i(x) > 0$ for $x
\in {\mathcal R}^{\ell}_i$ (and also $F_i(x) < 0$ for $x\in
{\mathcal R}^{r}_i$). The Leray$-$Schauder continuation theorem
\cite[Th\'{e}or\`{e}me Fondamental]{LeSh-34} (see also
\cite{MaVi-89, Ma-97}) ensures the existence of a closed
connected set
$${\mathcal Z}\subseteq \{(y,\lambda)\in {\mathcal M}\times [a_N,b_N]\,:\, f(y,\lambda) =
0\in {\mathbb R}^{N-1}\}$$
whose projection onto the $\lambda-$component covers the interval $[a_N,b_N].$
By the above positions the thesis follows immediately.
\end{proof}
Theorem \ref{th-3b.1a} can be found also in \cite{KuSoTu-00} and
it was then applied in \cite{KuPoSoTu-05}.

\bigskip

\noindent
In the next lemma we take the unit cube
$I^N:=[0,1]^N$ as $N-$dimensional rectangle and choose
the interior point $P = (\tfrac 1 2,\tfrac 1 2,\dots, \tfrac 1 2).$

\begin{lemma}\label{th-3b.2}
Let $n\in \{1,\dots, N-1\}$ be fixed.
Assume that, for each $i\in\{1,\dots,n\},$ there exists a compact set
$${\mathcal S}_i\subseteq I^N$$
such that ${\mathcal S}_i$ cuts the arcs between $[x_i = 0]$
and $[x_i = 1]$ in $I^N.$ Then
there exists a connected subset ${\mathcal Z}$ of
$\,\displaystyle{\bigcap_{i=1}^n {\mathcal S}_i \not=\emptyset},$
whose dimension at each point is at least
$N-n.$ Moreover,
$$\mbox{\rm dim}({\mathcal Z}\cap\partial I^N)\geq N - n - 1$$
and
$$\pi: {\mathcal Z}\cap \partial I^N \to {\mathbb R}^{N-n}\setminus\{0\}$$
is essential (where $\pi$ is defined as in \eqref{eq-3b.1}).
\end{lemma}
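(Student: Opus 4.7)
The strategy mirrors that of Theorem~\ref{th-2b.2}, reducing the cutting hypothesis on the sets $\mathcal{S}_i$ to a zero-set condition for a continuous vector field, and then invoking Theorem~\ref{th-3b.1} in place of the Poincar\'e--Miranda theorem so as to retrieve dimensional information about the intersection rather than merely its non-emptiness.

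The first step is to apply Lemma~\ref{lem-ABC.1} with $A=[x_i=0]$, $B=[x_i=1]$ and $C=\mathcal{S}_i$ in $X=I^N$. For every $i=1,\dots,n$ this produces a continuous function $f_i:I^N\to\mathbb{R}$ with $f_i\le 0$ on $[x_i=0]$, $f_i\ge 0$ on $[x_i=1]$, and $\mathcal{S}_i=\{x\in I^N:f_i(x)=0\}$. Setting $F:=(f_1,\dots,f_n):I^N\to\mathbb{R}^n$, one has $F^{-1}(0)=\bigcap_{i=1}^n\mathcal{S}_i$, so the assertion is reduced to a dimensional statement on this zero set.

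Since Theorem~\ref{th-3b.1} demands \emph{strict} sign inequalities on the faces $[x_i=0]$ and $[x_i=1]$, and Lemma~\ref{lem-ABC.1} delivers only weak ones, I pass to the perturbed field $F^\varepsilon=(f_1^\varepsilon,\dots,f_n^\varepsilon)$ with $f_i^\varepsilon(x):=f_i(x)+\varepsilon(2x_i-1)$ for $\varepsilon>0$. Then $f_i^\varepsilon\le -\varepsilon<0$ on $[x_i=0]$ and $f_i^\varepsilon\ge \varepsilon>0$ on $[x_i=1]$, so Theorem~\ref{th-3b.1} applied with centre $P=(\tfrac{1}{2},\dots,\tfrac{1}{2})$ yields a connected set $\mathcal{Z}^\varepsilon\subseteq (F^\varepsilon)^{-1}(0)$ whose dimension at each point is at least $N-n$, with $\dim(\mathcal{Z}^\varepsilon\cap\partial I^N)\ge N-n-1$ and $\pi|_{\mathcal{Z}^\varepsilon\cap\partial I^N}$ essential into $\mathbb{R}^{N-n}\setminus\{0\}$.

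The last step is the limit $\varepsilon\downarrow 0$. By compactness of the hyperspace of subcontinua of $I^N$ in the Hausdorff metric, a subsequence $\mathcal{Z}^{\varepsilon_k}$ converges to a continuum $\mathcal{Z}\subseteq I^N$. The inclusion $\mathcal{Z}\subseteq \bigcap_i\mathcal{S}_i$ is immediate: if $x=\lim x_k$ with $x_k\in\mathcal{Z}^{\varepsilon_k}$, then $f_i(x_k)=-\varepsilon_k(2x_{k,i}-1)\to 0$ and continuity gives $f_i(x)=0$. The hard part, and the main obstacle of the proof, is transferring the dimension and essentiality conclusions through this limit, because covering dimension need not be preserved under Hausdorff convergence. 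The plan is to exploit the homotopy invariance of essentiality under uniformly small perturbations to keep $\pi|_{\mathcal{Z}\cap\partial I^N}$ essential, and then to use the characterization of covering dimension as the obstruction to extending maps into spheres (the tool underlying \cite{FiMaPe-86} and \cite{En-78}) to recover $\dim(\mathcal{Z}\cap\partial I^N)\ge N-n-1$ and $\dim\mathcal{Z}\ge N-n$ at every point of $\mathcal{Z}$.
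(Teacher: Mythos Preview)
Your reduction to a vector field via Lemma~\ref{lem-ABC.1} is correct, but the perturbation--limit strategy runs into a genuine difficulty that you yourself identify: Hausdorff limits of continua do not preserve covering dimension, and the essentiality of $\pi$ is defined on a \emph{varying} domain $\mathcal{Z}^{\varepsilon}\cap\partial I^N$, so ``homotopy invariance under small perturbations'' does not apply in any direct way. The sketch you give (essentiality plus the extension-to-spheres characterization of dimension) is not a proof; making it rigorous would require, at minimum, a uniform neighbourhood retraction argument or a careful degree computation that survives the limit, and neither is supplied. As it stands, this is a gap.

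The paper sidesteps the limit entirely by a simple geometric trick: instead of perturbing $F$, it \emph{enlarges the domain}. Observe that each $\mathcal{S}_i$ also cuts the arcs between $[x_i=0]$ and $[x_i=1]$ in the ``tunnel set'' $T_i:=\prod_{j<i}[0,1]\times\mathbb{R}\times\prod_{j>i}[0,1]$. Applying Lemma~\ref{lem-ABC.5} on $T_i$ yields $f_i:T_i\to\mathbb{R}$ with $f_i\le 0$ for $x_i\le 0$, $f_i\ge 0$ for $x_i\ge 1$, and $\mathcal{S}_i=f_i^{-1}(0)$. Now the compactness hypothesis $\mathcal{S}_i\subseteq I^N$ forces $f_i(x)<0$ whenever $x_i<0$ and $f_i(x)>0$ whenever $x_i>1$ (otherwise a zero of $f_i$ would lie outside $I^N$). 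Extending $f_i$ to $\mathbb{R}^N$ by projecting the other coordinates into $[0,1]$, one obtains $F=(F_1,\dots,F_n)$ satisfying the \emph{strict} sign conditions of Theorem~\ref{th-3b.1} on the boundary of the enlarged rectangle $\mathcal{R}=\prod_{i=1}^n[-1,1]\times\prod_{i=n+1}^N[0,1]$, while $F^{-1}(0)\cap\mathcal{R}=\bigcap_i\mathcal{S}_i$ is unchanged. Theorem~\ref{th-3b.1} then applies directly, with no limiting procedure needed.
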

\begin{proof}
For any fixed index $i^*\in\{1,\dots,n\}$ we define the {\textit{tunnel set}}
$$T_{i^*}\,:= \prod_{i=1}^{i^*-1} [0, 1]\times {\mathbb R}\times
\prod_{i=i^*+1}^N [0,1].$$
It is immediate to check that ${\mathcal S}_{i^*}$ cuts the arcs between
$[x_{i^*} = 0]$
and $[x_{i^*} = 1]$ in $T_{i^*}\,.$
\\
By Lemma \ref{lem-ABC.5} there exists a continuous function
$f_{i^*}\,: T_{i^*}\to {\mathbb R}$ such that
$$f_{i^*}(x) \leq 0\,,\;\forall\, x\in T_{i^*}\,\;\mbox{with } x_{i^*}\leq 0\quad
\mbox{and } \;
f_{i^*}(x) \geq 0\,,\;\forall\, x\in T_{i^*}\,\;\mbox{with } x_{i^*}\geq 1\,.$$
Moreover,
$${\mathcal S}_{i^*}\,= \{ x\in T_{i^*}\,: \, f_{i^*}(x) = 0\}.$$
By this latter property and the fact that ${\mathcal S}_{i^*}\subseteq I^N$
it follows that
$$f_{i^*}(x) < 0\,,\;\forall\, x\in T_{i^*}\,\;\mbox{with } x_{i^*}< 0\quad
\mbox{and } \;
f_{i^*}(x) > 0\,,\;\forall\, x\in T_{i^*}\,\;\mbox{with } x_{i^*} >  1\,.$$
Now we define, for $x = (x_1,\dots,x_{i^*-1},x_{i^*},x_{i^*+1},\dots, x_N)\in {\mathbb R}^N,$
the continuous function
$$F_{i^*}(x):= f_{i^*}\bigl(\,\eta_{[0,1]}(x_1),\dots,
\eta_{[0,1]}(x_{i^* -1}), x_{i^*},
\eta_{[0,1]}(x_{i^* +1}),\dots,
\eta_{[0,1]}(x_{N})\,\bigr ),$$
where
$\eta_{[0,1]}$
is the projection of ${\mathbb R}$ onto the interval $[0,1]$
defined as in \eqref{eq-2b.e}. As a consequence of the above positions we find that
$$F_{i^*}(x) < 0,\;\forall\, x\in{\mathbb R}^N\,: \, x_{i^*} < 0\quad\mbox{and }\;
F_{i^*}(x) > 0,\;\forall\, x\in{\mathbb R}^N\,: \, x_{i^*} >  1\,.$$
We are ready to apply Theorem \ref{th-3b.1} to the map
$F = (F_1,\dots,F_n)$ restricted to the rectangle
$${\mathcal R}:= \prod_{i=1}^{n}[-1,1] \times \prod_{i=n+1}^{N}[0,1].$$
Clearly,
$$\bigl(F|_{\mathcal R}\bigr)^{-1}(0) = \bigcap_{i=1}^n {\mathcal S}_i\subseteq I^N$$
and the thesis is achieved.
\end{proof}

\begin{remark}\label{rem-3b.2}
Both in Theorem \ref{th-3b.1} and in Lemma \ref{th-3b.2} the fact that we have privileged
the first $n$ components is purely conventional. It is evident that the results are still true
for any finite sequence of indexes $i_1 < i_2 < \dots < i_n$ in $\{1,\dots,N\}.$
Moreover, Lemma \ref{th-3b.2} is invariant under homeomorphisms in a sense that is described
in Theorem \ref{th-3b.2a} below. The same observation applies systematically to all the other results
(preceding and subsequent) in which some directions are conventionally chosen.
\hfill$\lhd$
\end{remark}
In view of the next result we recall the concept of generalized rectangle ${\widehat{X}}:= ({X},h)$ given in
Definition \ref{def-2b.1}, where $h: I^N\to X\subseteq Z$ is a homeomorphism
of the unit cube $I^N = [0,1]^N$
onto its image $X.$

\begin{theorem}\label{th-3b.2a}
Let ${\widehat{X}}:= ({X},h)$ be a generalized rectangle of a metric space $Z.$
Let a finite sequence of $n$ indexes $i_1 < i_2 < \dots < i_n$ ($n\geq 1$)
be fixed in $\{1,\dots,N\}.$
Assume that, for each $j\in\{i_1,\dots,i_n\},$ there exists a compact set
$${\mathcal S}_j\subseteq {X}$$
such that ${\mathcal S}_j$ cuts the arcs between ${X}_{j}^{\ell}$
and ${X}_{j}^{r}$ in $X.$
Then there exists a compact connected subset ${\mathcal Z}$ of
$\,\displaystyle{\bigcap_{k=1}^n {\mathcal S}_{i_k}\not=\emptyset},$
whose dimension at each point is at least
$N-n.$ Moreover,
$$\mbox{\rm dim}({\mathcal Z}\cap\vartheta X)\geq N - n - 1$$
and
$$\pi: h^{-1}({\mathcal Z})\cap \partial I^N \to {\mathbb R}^{N-n}\setminus\{0\}$$
is essential (where $\pi$ is defined as in \eqref{eq-3b.1}).
\end{theorem}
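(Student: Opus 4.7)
The plan is to reduce Theorem \ref{th-3b.2a} to the previously proved Lemma \ref{th-3b.2} by pulling everything back through the homeomorphism $h\colon I^N\to X$ and then pushing the resulting continuum forward. First, I would define the compact sets
\[
C_j := h^{-1}(\mathcal{S}_j)\subseteq I^N,\qquad j\in\{i_1,\dots,i_n\}.
\]
Because $h$ is a homeomorphism, paths in $X$ correspond bijectively (and continuously, in both directions) to paths in $I^N$, and $h^{-1}(X_j^\ell)=[x_j=0]$, $h^{-1}(X_j^r)=[x_j=1]$; hence the hypothesis that $\mathcal{S}_j$ cuts the arcs between $X_j^\ell$ and $X_j^r$ in $X$ transfers verbatim to the statement that $C_j$ cuts the arcs between $[x_j=0]$ and $[x_j=1]$ in $I^N$.

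Next, by Lemma \ref{th-3b.2} (in the version mentioned in Remark \ref{rem-3b.2}, which allows the privileged directions to be any finite sequence $i_1<i_2<\dots<i_n$ rather than the first $n$ coordinates), there exists a connected subset $\mathcal{Z}_0\subseteq \bigcap_{k=1}^n C_{i_k}$ whose covering dimension at each of its points is at least $N-n$, with $\dim(\mathcal{Z}_0\cap\partial I^N)\ge N-n-1$ and such that the complementing projection $\pi\colon\mathcal{Z}_0\cap\partial I^N\to\mathbb{R}^{N-n}\setminus\{0\}$ is essential. Since the underlying result from \cite{FiMaPe-86} produces a closed set and $\bigcap_{k=1}^n C_{i_k}$ is compact in $I^N$, I may assume (taking the closure inside the intersection if necessary) that $\mathcal{Z}_0$ is a compact continuum.

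Finally, define $\mathcal{Z}:= h(\mathcal{Z}_0)$. As the continuous image of a compact connected set, $\mathcal{Z}$ is compact and connected, and clearly $\mathcal{Z}\subseteq\bigcap_{k=1}^n \mathcal{S}_{i_k}$. Because covering dimension is a topological invariant (and $h$ restricted to $\mathcal{Z}_0$ is a homeomorphism onto $\mathcal{Z}$), the local-dimension bound $\ge N-n$ at each point of $\mathcal{Z}_0$ passes to $\mathcal{Z}$. Similarly, $h(\partial I^N)=\vartheta X$ by definition, so
\[
\mathcal{Z}\cap\vartheta X = h(\mathcal{Z}_0\cap\partial I^N),
\]
whence $\dim(\mathcal{Z}\cap\vartheta X)\ge N-n-1$. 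The essentiality statement is already phrased in terms of $h^{-1}(\mathcal{Z})\cap\partial I^N=\mathcal{Z}_0\cap\partial I^N$, so it is inherited immediately from Lemma \ref{th-3b.2}.

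The only nontrivial step is the transfer in the first paragraph, namely observing that cutting the arcs is preserved by pulling back through $h$; everything else is a bookkeeping exercise using the topological invariance of covering dimension and the identifications $h([x_j=0])=X_j^\ell$, $h([x_j=1])=X_j^r$, $h(\partial I^N)=\vartheta X$. The compactness of $\mathcal{Z}$ is the one point requiring a brief extra remark, since Lemma \ref{th-3b.2} as stated gives only a connected set; this is handled by working with the compact continuum produced by the Fitzpatrick--Massab\'{o}--Pejsachowicz theorem underlying Theorem \ref{th-3b.1}.
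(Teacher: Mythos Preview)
Your proposal is correct and follows essentially the same route as the paper: pull back through $h^{-1}$ to reduce to the unit cube, invoke Lemma~\ref{th-3b.2} (in its general-index form from Remark~\ref{rem-3b.2}), and then push forward using the topological invariance of covering dimension and the identifications $h(\partial I^N)=\vartheta X$. You even address explicitly the compactness of $\mathcal{Z}$, a point the paper leaves implicit when it says to ``repeat the arguments employed therein.''
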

\begin{proof}
The result easy follows by moving to the setting of Lemma \ref{th-3b.2}
through the homeomorphism $h^{-1}$ and repeating the arguments employed therein.
\end{proof}

\noindent
We end this section by presenting a result (Corollary \ref{cor-3b.1}) which plays a crucial role
in the subsequent proofs.
It concerns the case $n=N-1$ and could be obtained by
suitably adapting the arguments employed in Lemma \ref{th-3b.2}.
However, due to its significance for our applications we prefer to provide a detailed proof
using Theorem \ref{th-3b.1a}
(which requires only the knowledge of the Leray$-$Schauder principle
and therefore, in some sense, is more elementary).
Corollary \ref{cor-3b.1} extends to an arbitrary dimension some results
in \cite[Appendix]{ReZa-00} which were proved only for $N=2$ using \cite{Sa-80}.

\begin{corollary}\label{cor-3b.1}
Let ${\widehat{X}}:= ({X},h)$ be a generalized rectangle in a metric space $Z.$
Let $k\in \{1,\dots,N\}$ be fixed.
Assume that, for each $j\in\{1,\dots,N\}$ with $j\not=k,$
there exists a compact set
$${\mathcal S}_j\subseteq {X}$$
such that ${\mathcal S}_j$ cuts the arcs between ${X}_{j}^{\ell}$
and ${X}_{j}^{r}$ in $X.$
Then there exists a compact connected subset ${\mathcal C}$ of
$\,\displaystyle{\bigcap_{i=1\atop i\not=k}^N {\mathcal S}_{i}\not=\emptyset},$
such that
$${\mathcal C}\cap {X}_{k}^{\ell}\not=\emptyset,\quad
{\mathcal C}\cap {X}_{k}^{r}\not=\emptyset.$$
\end{corollary}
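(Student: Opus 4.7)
The plan is to imitate the strategy used in Lemma \ref{th-3b.2}, but invoking Theorem \ref{th-3b.1a} in place of Theorem \ref{th-3b.1}, so as to replace ``connected subset of dimension $\geq 1$'' by ``connected subset touching both faces $X_k^\ell$ and $X_k^r$''. As a preliminary reduction, by permuting coordinates (using Remark \ref{rem-3b.2}) we may assume $k=N$, and by pulling back through $h^{-1}$ we may work directly in $I^N$, proving the statement for the sets $C_j:=h^{-1}(\mathcal S_j)\subseteq I^N$ ($j\neq N$), each of which cuts the arcs in $I^N$ between $[x_j=0]$ and $[x_j=1]$. Once we produce a compact connected subset $\mathcal Z\subseteq \bigcap_{j\neq N} C_j$ meeting both $[x_N=0]$ and $[x_N=1]$, its image $\mathcal C:=h(\mathcal Z)$ gives the desired set.

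For each $j\in\{1,\dots,N-1\}$ I would introduce the tunnel set $T_j:=[0,1]^{j-1}\times\mathbb R\times [0,1]^{N-j}$. Since every path in $T_j$ from $[x_j=0]$ to $[x_j=1]$ is automatically contained in $I^N$ (the non-$j$ coordinates are already confined to $[0,1]$), $C_j$ still cuts the arcs between $[x_j=0]$ and $[x_j=1]$ inside the connected, locally arcwise connected space $T_j$. Hence Lemma \ref{lem-ABC.5} supplies a continuous $f_j:T_j\to\mathbb R$ with $C_j=\{f_j=0\}$, and with $f_j\leq 0$ on $[x_j\leq 0]$, $f_j\geq 0$ on $[x_j\geq 1]$; because $C_j\subseteq I^N$, these inequalities become strict as soon as $x_j<0$ or $x_j>1$. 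I then extend $f_j$ to all of $\mathbb R^N$ via the formula
$$F_j(x):=f_j\bigl(\eta_{[0,1]}(x_1),\dots,\eta_{[0,1]}(x_{j-1}),x_j,\eta_{[0,1]}(x_{j+1}),\dots,\eta_{[0,1]}(x_N)\bigr),$$
exactly as in the proof of Lemma \ref{th-3b.2}, so that $F_j(x)<0$ for $x_j<0$ and $F_j(x)>0$ for $x_j>1$.

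Next I would apply Theorem \ref{th-3b.1a} to $F=(F_1,\dots,F_{N-1})$ on the enlarged rectangle
$$\mathcal R':=[-1,2]^{N-1}\times[0,1],$$
whose $N$-th faces coincide with $[x_N=0]$ and $[x_N=1]$. On $\mathcal{R}'{}^{\ell}_j=\{x_j=-1\}$ and $\mathcal{R}'{}^{r}_j=\{x_j=2\}$ the required strict sign conditions hold, so Theorem \ref{th-3b.1a} yields a closed connected set $\mathcal Z\subseteq F^{-1}(0)\cap \mathcal R'$ meeting both $\{x_N=0\}$ and $\{x_N=1\}$.

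The main (yet mild) obstacle is to check that this $\mathcal Z$ really lies in $\bigcap_{j\neq N} C_j$. The key observation is that $F_j(x)=0$ forces $x_j\in[0,1]$ by the strict sign conditions, so $F^{-1}(0)\cap \mathcal R'\subseteq I^N$; and once we are inside $I^N$, the projections $\eta_{[0,1]}$ act as the identity, so $F_j=f_j$ there and $F_j^{-1}(0)\cap I^N=C_j$. Thus $\mathcal Z\subseteq\bigcap_{j\neq N} C_j$, and $\mathcal Z$ is compact because it is closed inside the compact $\mathcal R'$. Pulling back via $h$, the set $\mathcal C:=h(\mathcal Z)$ is a compact connected subset of $\bigcap_{j\neq k}\mathcal S_j$ with $\mathcal C\cap X_k^\ell\neq\emptyset$ and $\mathcal C\cap X_k^r\neq\emptyset$, as required.
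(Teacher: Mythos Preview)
Your proof is correct and follows essentially the same route as the paper's: reduce to $k=N$ and to $I^N$ via $h^{-1}$, pass to tunnel sets, invoke Lemma~\ref{lem-ABC.5} to produce the scalar functions $f_j$, extend them via the projections $\eta_{[0,1]}$, and apply Theorem~\ref{th-3b.1a} on an enlarged rectangle (the paper uses $[-\varepsilon,1+\varepsilon]^{N-1}\times[0,1]$ where you use $[-1,2]^{N-1}\times[0,1]$, which is immaterial). One small slip: a path in $T_j$ joining $[x_j=0]$ to $[x_j=1]$ is \emph{not} automatically contained in $I^N$, since its $j$-th coordinate may wander outside $[0,1]$; however, by Bolzano it has a sub-path with $j$-th coordinate confined to $[0,1]$, and that sub-path lies in $I^N$ and must meet $C_j$, so your cutting claim in $T_j$ still holds.
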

\begin{proof}
Without loss of generality (if necessary, by a permutation of the
coordinates), we assume $k=N.$ In this manner,
using the homeomorphism $h^{-1}: Z\supseteq X= h(I^N)\to I^N,$
we can confine ourselves to the
following situation:
\\
For each $j\in\{1,\dots,N-1\},$ there exists a compact set
$${\mathcal S}\,'_j \,:= h^{-1}({\mathcal S}_j)\subseteq I^N$$
that cuts the arcs between $[x_i=0]$ and $[x_i=1]$
in $I^N\,.$

Proceeding as in the proof of Lemma \ref{th-3b.2},
for any fixed index $i^*\in\{1,\dots,N-1\}$ we define the {\textit{tunnel set}}
$$T_{i^*}\,:= \prod_{i=1}^{i^*-1} [0,1]\times {\mathbb R}\times
\prod_{i=i^*+1}^N [0, 1]$$
and find that ${\mathcal S}\,'_{i^*}$ cuts the arcs between
$[x_{i^*} = 0]$
and $[x_{i^*} = 1]$ in $T_{i^*}\,.$
\\
Hence, by Lemma \ref{lem-ABC.5} there exists a continuous function
$f_{i^*}\,: T_{i^*}\to {\mathbb R}$ such that
$$f_{i^*}(x) \leq 0\,,\;\forall\, x\in T_{i^*}\,\;\mbox{with } x_{i^*}\leq 0\quad
\mbox{and } \;
f_{i^*}(x) \geq 0\,,\;\forall\, x\in T_{i^*}\,\;\mbox{with } x_{i^*}\geq  1\,.$$
Moreover,
$${\mathcal S}\,'_{i^*}\,= \{ x\in T_{i^*}\,: \, f_{i^*}(x) = 0\},$$
as well as
$$f_{i^*}(x) < 0\,,\;\forall\, x\in T_{i^*}\,\;\mbox{with } x_{i^*}< 0\quad
\mbox{and } \;
f_{i^*}(x) > 0\,,\;\forall\, x\in T_{i^*}\,\;\mbox{with } x_{i^*} >  1\,.$$
We define, for $x = (x_1,\dots,x_{i^*-1},x_{i^*},x_{i^*+1},\dots, x_N)\in {\mathbb R}^N,$
the continuous function
$$F_{i^*}(x):= f_{i^*}\bigl(\,\eta_{[0,1]}(x_1),\dots,
\eta_{[0, 1]}(x_{i^* -1}), x_{i^*},
\eta_{[0, 1]}(x_{i^* +1}),\dots,
\eta_{[0, 1]}(x_{N})\,\bigr ),$$
where
$\eta_{[0, 1]}$
is the projection of ${\mathbb R}$ onto the interval $[0, 1]$
defined as in \eqref{eq-2b.e}. Then we have
$$F_{i^*}(x) < 0,\;\forall\, x\in{\mathbb R}^N\,: \, x_{i^*} < 0\quad\mbox{and }\;
F_{i^*}(x) > 0,\;\forall\, x\in{\mathbb R}^N\,: \, x_{i^*} >  1\,.$$
Now we consider the map
$F = (F_1,\dots,F_{N-1})$ restricted to the rectangle
$${\mathcal R}:= \prod_{i=1}^{N-1}[-\varepsilon,1+\varepsilon]
\times [0,1],$$
for any fixed $\varepsilon > 0.$
Since
$$\bigl(F|_{\mathcal R}\bigr)^{-1}(0) = \bigcap_{i=1}^{N-1} {\mathcal S}\,'_i\subseteq I^N,$$
the thesis follows by Theorem \ref{th-3b.1a}. In fact, we can define the set
$${\mathcal C}:= h({\mathcal Z}),$$
where ${\mathcal Z} \subseteq \bigl(F|_{\mathcal R}\bigr)^{-1}(0)$
comes from the statement of Theorem \ref{th-3b.1a}.
\end{proof}

\section{Periodic points and chaotic dynamics for maps which expand the paths}\label{sec-3c}
We provide now an extension to $N-$dimensional spaces of some results previously
obtained in \cite{PaZa-04a, PaZa-04b} for the planar case.
As in \cite{PaZa-04a, PaZa-04b} we are interested in the study
of maps which expand the arcs along a certain direction. To this aim, we
reconsider Definition \ref{def-2b.1} in order to focus our attention on a
generalized $N-$dimensional rectangle in which we have fixed (once for all)
the left and right sides. In the applications, these opposite sides
give an orientation (in a rough sense) of the generalized rectangle
and they will be related to the expansive direction.

\begin{definition}\label{def-3c.1}
Let $Z$ be a metric space and let
${\widehat{X}}:= ({X},h)$ be a
generalized $N-$dimensional rectangle of $Z.$
We set
$$X_{\ell}:= h([x_N=0]),\quad X_{r}:= h([x_N=1])$$
and
$$X^-:= X_{\ell}\cup X_{r}\,.$$
The pair
$${\widetilde{X}}:= (X, X^-)$$
is called an {\textit{oriented $N-$dimensional rectangle}}
(or, simply, an oriented rectangle)
\textit{of $Z$}. For simplicity,
the reference to the ambient space $Z$ will be omitted
when no possibility of confusion may occur.
\end{definition}

\begin{remark}\label{rem-3c.1}
First of all we observe that, instead of the unit cube $[0,1]^N,$ we could have chosen
in the above definition any $N-$dimensional rectangle.
In this case the sides $X_{\ell}$ and $X_{r}$
would be defined (in a obvious manner) accordingly.

A comparison between Definition \ref{def-2b.1} and Definition \ref{def-3c.1}
shows that an oriented rectangle is just a generalized rectangle in which
we have privileged the two subsets of its contour which correspond to the
opposite faces for some fixed component (namely, the $x_N-$component).
The choice of the $N-$th component is purely conventional.
For example, in some other papers (see \cite{GiRo-03, PiZa-05, ZgGi-04}),
the first component was selected.
Clearly, there is no substantial difference as the homeomorphism $h$
could be composed with a permutation matrix (yielding to a new
homeomorphism with the same image set). From this point of view, our
definition fits to the one of $h-$set of $(1,N-1)-$type,
given by Zgliczy\'{n}ski and Gidea in \cite{ZgGi-04}
for a subset of ${\mathbb R}^N$ which is obtained as the counterimage
of the unit cube through a homeomorphism of ${\mathbb R}^N$ onto itself.
A similar concept is also considered by Gidea and Robinson in \cite{GiRo-03}:
they call this object a $(1,N-1)-$window and it is defined as
a homeomorphic copy of the unit cube $I^N$ of $\mathbb R^N$
through a homeomorphism whose domain is an open neighborhood of $I^N.$
\hfill$\lhd$\\
\end{remark}

The next definition introduces the concept of stretching along the paths
(already considered in \cite{PaZa-04a, PaZa-04b, PiZa-05}) for maps
between oriented rectangles.

\begin{definition}\label{def-3c.2}
Let $Z$ be a metric space and let
${\widetilde{X}}:= (X, X^-)$ and
${\widetilde{Y}}:= (Y, Y^-)$
be {oriented $N-$dimensional rectangles} of $Z.$
Let $\psi: Z\supseteq D_{\psi}\to Z$ be a map (not necessarily continuous
on its whole domain $D_{\psi}$) and let
$${\mathcal D}\subseteq X\cap D_{\psi}\,.$$
We say that {\textit{$({\mathcal D},\psi)$ stretches ${\widetilde{X}}$ to
${\widetilde{Y}}$ along the paths}} and write
$$({\mathcal D},\psi): {\widetilde{X}}\stretchx {\widetilde{Y}}$$
if there exists a compact set ${\mathcal K}\subseteq {\mathcal D}$
such that $\psi$ is continuous on ${\mathcal K}$ and
for every path $\gamma$ with
$${\overline{\gamma}}\subseteq X\quad\mbox{and }\;
{\overline{\gamma}}\cap X_{\ell}\not=\emptyset,\;\;
{\overline{\gamma}}\cap X_{r}\not=\emptyset,$$
there is a sub-path $\sigma$ of $\gamma$ such that
$${\overline{\sigma}}\subseteq {\mathcal K}\quad\mbox{and }\;
\psi({\overline{\sigma}})\subseteq Y,\quad \mbox{with }\;
\psi({\overline{\sigma}})\cap Y_{\ell}\not=\emptyset,\;\;
\psi({\overline{\sigma}})\cap Y_{r}\not=\emptyset.$$
We also write
$$({\mathcal D},{\mathcal K},\psi): {\widetilde{X}}\stretchx {\widetilde{Y}}$$
when we wish to put in evidence the role of the set ${\mathcal K}.$
In some applications, we take ${\mathcal K}\subseteq {\mathcal D}$ such that
$\psi({\mathcal K})\subseteq Y.$ In this case, the condition $\psi({\overline{\sigma}})\subseteq Y$
is automatically satisfied.
\end{definition}

\bigskip

\begin{remark}\label{rem-3c.2}
Let
${\widetilde{X}}= (X, X^-)$ and
${\widetilde{Y}}= (Y, Y^-)$
be {oriented $N-$dimensional rectangles} of a metric space $Z$
and assume that
$$({\mathcal D},\psi): {\widetilde{X}}\stretchx {\widetilde{Y}}$$
for some $\psi: Z\supseteq D_{\psi}\to Z$
and ${\mathcal D}\subseteq X\cap D_{\psi}\,.$ From the above definition it turns out that
$$({\mathcal D}\,',\psi): {\widetilde{X}}\stretchx {\widetilde{Y}},
\quad
\forall\, {\mathcal D}\,'\,:\; \psi^{-1}(Y)\cap {\mathcal D}\subseteq {\mathcal D}\,'\subseteq X\cap D_{\psi}\,.$$
We also note that if ${\mathcal D}$ is closed and $\psi$ is continuous on ${\mathcal D},$ we
can take ${\mathcal K} = {\mathcal D}$ in the definition.

Clearly, there are situations where there is no need
to invoke the set ${\mathcal K}$ because the knowledge of ${\mathcal D}$
gives the required information. A particular case in which it is not necessary to specify such a compact ${\mathcal K}$ occurs when $X\subseteq D_{\psi}$:
indeed, an easy criterion to verify the stretching condition in this particular context is checking that $\psi(X)\subseteq Y$
and $\psi(X_{\ell})\subseteq Y_{\ell}$ as well as $\psi(X_{r})\subseteq Y_{r}\,,$ or
$\psi(X_{\ell})\subseteq Y_{r}$ as well as $\psi(X_{r})\subseteq Y_{\ell}\,.$

On the other hand, in some cases, it may be useful to emphasize the existence of a special set
${\mathcal K}.$ For instance, we could be interested in examples where
$({\mathcal D},{\mathcal K}_i,\psi): {\widetilde{X}}\stretchx {\widetilde{Y}}$
for different (even disjoint) sets  ${\mathcal K}_i$'s  (see, e.g., Theorem \ref{th-3c.3})
and also in situations in which either $\psi$ is not defined on the whole set $X$ or
$\psi$ is defined on $X$ but $\psi(X)\not\subseteq Y.$
\hfill$\lhd$\\
\end{remark}

As a consequence of Corollary \ref{cor-3b.1} we obtain the following result
which extends to $N-$dimensional rectangles a fixed point theorem
(see \cite[Th. 3.1]{PaZa-04b}), originally proved for $N=2.$

\begin{theorem}\label{th-3c.1}
Let ${\widetilde{X}}:= (X, X^-)$ be an oriented $N-$dimensional rectangle
of a metric space $Z$ and let $\psi: Z\supseteq D_{\psi}\to Z$
and ${\mathcal D}\subseteq X\cap D_{\psi}$ be such that
\begin{equation}\label{eq-3c.DK}
({\mathcal D},{\mathcal K},\psi): {\widetilde{X}}\stretchx {\widetilde{X}},
\end{equation}
for some compact set ${\mathcal K}\subseteq {\mathcal D}.$
Then there exists ${\widetilde{w}}\in {\mathcal K}$ such that
$${\psi}({\widetilde{w}}\,) = {\widetilde{w}}.$$
\end{theorem}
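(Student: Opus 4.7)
The plan is to transport the problem to the unit cube via $h$, build a compact continuum spanning the ``expansive'' direction by means of Corollary~\ref{cor-3b.1}, and then use the stretching hypothesis together with Bolzano's theorem to force the remaining coordinate to be fixed at a point of ${\mathcal K}$. Set $\tilde{\mathcal K} := h^{-1}({\mathcal K})$. Since $X$ is compact and $\psi|_{\mathcal K}$ is continuous, we may replace ${\mathcal K}$ by the compact subset ${\mathcal K}\cap\psi^{-1}(X)$, which by Definition~\ref{def-3c.2} still satisfies \eqref{eq-3c.DK}; so we can assume $\psi({\mathcal K})\subseteq X$ and form the continuous map $\tilde\psi := h^{-1}\circ\psi\circ h\colon \tilde{\mathcal K}\to I^N$. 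By the Tietze--Urysohn theorem, for each $j=1,\dots,N-1$ I extend $\tilde\psi_j$ to a continuous function $\hat\psi_j\colon I^N\to[0,1]$ and put
\[
{\mathcal S}_j := \{\,x\in I^N : \hat\psi_j(x) = x_j\,\}.
\]
Since $\hat\psi_j(x)-x_j\ge 0$ on $[x_j=0]$ and $\le 0$ on $[x_j=1]$, Lemma~\ref{lem-ABC.1} implies that each ${\mathcal S}_j$ is compact and cuts the arcs between $[x_j=0]$ and $[x_j=1]$ in $I^N$. Applying Corollary~\ref{cor-3b.1} to the generalized rectangle $(I^N,\mathrm{id})$ with $k=N$ yields a compact connected set
\[
{\mathcal C}\subseteq \bigcap_{j=1}^{N-1}{\mathcal S}_j, \qquad {\mathcal C}\cap[x_N=0]\neq\emptyset,\quad {\mathcal C}\cap[x_N=1]\neq\emptyset,
\]
along which $\hat\psi_j(x)=x_j$ for every $j<N$.

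The main obstacle is that ${\mathcal C}$ lives in $I^N$ and need not meet $\tilde{\mathcal K}$ in a controlled way, so neither the identity $\hat\psi_j=\tilde\psi_j$ nor any constraint on the $N$-th component is yet available on ${\mathcal C}$. I bridge this by approximating ${\mathcal C}$ with paths and then invoking the stretching condition. By the first part of Lemma~\ref{lem-ABC.6}, for each $n\in{\mathbb N}$ there is a path $\gamma_n\colon[0,1]\to I^N$ with $\gamma_n(0)\in[x_N=0]$, $\gamma_n(1)\in[x_N=1]$ and $\overline{\gamma_n}\subseteq B({\mathcal C},1/n)$. Viewed through $h$ as a path in $X$ joining $X_\ell$ to $X_r$, such $\gamma_n$ admits, by the stretching hypothesis, a sub-path $\sigma_n$ with $\overline{\sigma_n}\subseteq\tilde{\mathcal K}$ and $\tilde\psi(\overline{\sigma_n})$ meeting both $[x_N=0]$ and $[x_N=1]$. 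Bolzano's theorem applied to $t\mapsto \tilde\psi_N(\sigma_n(t))-\sigma_{n,N}(t)$, which is nonpositive at a parameter where $\tilde\psi_N\circ\sigma_n=0$ and nonnegative at a parameter where $\tilde\psi_N\circ\sigma_n=1$, produces a point $y_n\in\overline{\sigma_n}\subseteq\tilde{\mathcal K}\cap B({\mathcal C},1/n)$ with $\tilde\psi_N(y_n)=y_{n,N}$.

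Finally, by compactness of $\tilde{\mathcal K}$ a subsequence of $(y_n)$ converges to some $y^*\in\tilde{\mathcal K}\cap{\mathcal C}$. Continuity of $\tilde\psi$ on $\tilde{\mathcal K}$ gives $\tilde\psi_N(y^*)=y^*_N$, while $y^*\in{\mathcal C}\cap\tilde{\mathcal K}$ gives $\tilde\psi_j(y^*)=\hat\psi_j(y^*)=y^*_j$ for every $j<N$; hence $\widetilde{w}:=h(y^*)\in{\mathcal K}$ is the desired fixed point of~$\psi$.
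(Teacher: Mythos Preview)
Your proof is correct and follows essentially the same route as the paper: transport to $I^N$ via $h$, extend the map by Tietze--Urysohn, build the cutting surfaces ${\mathcal S}_j$ for $j<N$, obtain the continuum ${\mathcal C}$ from Corollary~\ref{cor-3b.1}, approximate ${\mathcal C}$ by paths through Lemma~\ref{lem-ABC.6}, invoke the stretching hypothesis to land a sub-path in $\tilde{\mathcal K}$, apply Bolzano in the $N$-th coordinate, and pass to the limit. The only cosmetic difference is that you extend just the first $N-1$ components (working directly with $\tilde\psi_N$ on $\tilde{\mathcal K}$), whereas the paper extends all $N$ components at once; this changes nothing of substance.
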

\begin{proof}
Let $h: I^N=[0,1]^N\to h(I^N)=X\subseteq Z$ be a homeomorphism such that $X_{\ell}= h([x_N=0])$
and $X_{r}= h([x_N=1])$ and consider the compact set of $I^N$
$${\mathcal W}:= h^{-1}({\mathcal K}\cap \psi^{-1}(X))$$
and the continuous mapping $\phi = (\phi_1,\dots,\phi_N): {\mathcal W}\to I^N$ defined by
$$\phi(x):= h^{-1}\bigl(\psi( h(x))\bigr ),\quad\forall\, x\in {\mathcal W}.$$
By the Tietze$-$Urysohn theorem \cite[p.87]{En-78} there exists a continuous map
$$\varphi = (\varphi_1,\dots,\varphi_N): I^N\to I^N\,,\quad \varphi|_{\mathcal W}= \phi.$$
Let use define, for every $i = 1,\dots, N-1,$ the closed sets
$${\mathcal S}_i:= \{x=(x_1,\dots,x_{N-1},x_N)\in I^N\,: \, x_i = \varphi_i(x)\}\subseteq I^N.$$
Since $\varphi(I^N)\subseteq I^N,$ by the continuity of the $\varphi_i$'s, it is straightforward
to check that ${\mathcal S}_i$ cuts the arcs between $[x_i = 0]$ and $[x_i=1]$ in $I^N$
(for each $i=1,\dots,N-1$). Indeed, if $\gamma: [0,1]\to I^N$ is a path with $\gamma_i(0) = 0$ and
$\gamma_i(1)=1,$ then, for the auxiliary function
$g: [0,1]\ni t\mapsto \gamma_i(t) - \varphi_i(\gamma(t)),$ we have $g(0) \leq 0 \leq g(1)$
and therefore there exists $s\in [0,1]$ such that
$\gamma_i(s) = \varphi_i(\gamma(s))$ (Bolzano theorem), that is ${\overline{\gamma}}\cap {\mathcal S}_i\not=\emptyset.$
Thus the cutting property is proved.

\medskip

Now Corollary \ref{cor-3b.1} guarantees the existence of a continuum
\begin{equation}\label{eq-3c.CS}
{\mathcal C}\subseteq \bigcap_{i=1}^{N-1}{\mathcal S}_i
\end{equation}
such that
$${\mathcal C}\cap [x_N=0]\not=\emptyset,\quad {\mathcal C}\cap [x_N=1]\not=\emptyset.$$
By Lemma \ref{lem-ABC.6} we have that, for every $\varepsilon > 0,$ there exists a path
$\gamma_{\varepsilon}: [0,1]\to I^N$ such that
$$\gamma_{\varepsilon}(0) \in [x_N=0],\quad \gamma_{\varepsilon}(1) \in [x_N=1]\quad
\mbox{and }\; \gamma_{\varepsilon}(t)\in B({\mathcal C},\varepsilon)\cap I^N,\;\forall\, t\in [0,1].$$
By \eqref{eq-3c.DK} and the definition of ${\mathcal W}$ and $\phi,$ there exists a sub-path
$\sigma_{\varepsilon}$ of $\gamma_{\varepsilon}$ such that
$${\overline{\sigma_{\varepsilon}}}\subseteq {\mathcal W}\quad\mbox{and }\;
\phi({\overline{\sigma_{\varepsilon}}})\subseteq I^N,\quad \mbox{with }\;
\phi({\overline{\sigma_{\varepsilon}}})\cap [x_N=0]\not=\emptyset,\;\;
\phi({\overline{\sigma_{\varepsilon}}})\cap [x_N=1]\not=\emptyset.$$

The Bolzano theorem applied this time to the continuous mapping $x\mapsto x_N - \varphi_N(x)$
on ${\overline{\sigma_{\varepsilon}}}$ implies the existence of a point
$${\tilde{x}\,}^{\varepsilon}= ({\tilde{x}\,}^{\varepsilon}_1,\dots,
{\tilde{x}\,}^{\varepsilon}_N)\in {\overline{\sigma_{\varepsilon}}} \subseteq {\mathcal W}$$
such that
$${\tilde{x}\,}^{\varepsilon}_N = \varphi_N({\tilde{x}\,}^{\varepsilon}).$$
Taking $\varepsilon = \tfrac 1 n$ and letting $n\to \infty,$ by a standard compactness argument
we find a point
$${\tilde{x}} = ({\tilde{x}}_1,\dots,{\tilde{x}}_N)\in {\mathcal C} \cap {\mathcal W}$$
such that
$${\tilde{x}}_N = \varphi_N({\tilde{x}}).$$
By \eqref{eq-3c.CS}, recalling also the definition of the ${\mathcal S}_i$'s, we find
$${\tilde{x}} = \varphi({\tilde{x}})\in {\mathcal W}.$$
Then, since $\varphi|_{\mathcal W}= \phi,$ by the relation
$$h( \phi(x) ) = \psi( h(x) ),\quad\forall\, x\in {\mathcal W},$$
we have that $h({\tilde{x}}) = \psi( h({\tilde{x}}) )\in h({\mathcal W})$ and therefore
$${\widetilde{w}}:= h({\tilde{x}})\in {\mathcal K}\cap \psi^{-1}(X)$$
is the desired fixed point for $\psi.$
\end{proof}

\bigskip

Having proved Theorem \ref{th-3b.2a} and Theorem \ref{th-3c.1}, we have now available the tools for
extending to any dimension the results about periodic points and chaotic dynamics previously obtained
for the two$-$dimensional case in \cite{PaZa-04a, PaZa-04b}. For sake of conciseness we focus our attention only on
some of them (selected from \cite{PaZa-04a, PaZa-04b}), that we present below in the more general setting.

\begin{theorem}\label{th-3c.2}
Assume there is a double sequence of oriented $N-$dimensional rectangles $({\widetilde{X}_k})_{k\in {\mathbb Z}}$
(with ${\widetilde{X}_k} = (X_k,X^-_k)$) of a metric space $Z$ and a sequence $(({\mathcal D}_k,\psi_k))_{k\in {\mathbb Z}}\,,$
with ${\mathcal D}_k \subseteq X_k\,,$ such that
$$({\mathcal D}_k,\psi_k): {\widetilde{X}_k}  \stretchx {\widetilde{X}_{k+1}}\,,\quad\forall\, k\in {\mathbb Z}.$$
Let us denote by
$X^{k}_{\ell}$ and $X^{k}_{r}$ the two parts of $X_k^-\,.$
Then the following conclusions hold:
\begin{itemize}
\item[$(a_1)\;$] There is a sequence $(w_k)_{k\in {\mathbb Z}}$ such that $w_k\in {\mathcal D}_k$
and $\psi_k(w_k) = w_{k+1}\,,$ for all ${k\in {\mathbb Z}}\,;$
\item[$(a_2)\;$] For each $j\in {\mathbb Z}$ there exists a compact connected set ${\mathcal C}_j\subseteq {\mathcal D}_j$
which cuts the arcs between $X^{j}_{\ell}$ and $X^{j}_{r}$ in $X_j$ and such that, for every $w\in {\mathcal C}_j\,,$
there is a sequence $(y_i)_{i\geq j}$ with $y_j = w$ and
$$y_i \in {\mathcal D}_i\,,\quad \psi_{i}(y_i) = y_{i+1}\,,\;\forall\, i\geq j.$$
The dimension of ${\mathcal C}_j$ at each point is at least $N-1.$
Moreover, $\mbox{\rm dim}({\mathcal C}_j\cap \vartheta X_j)\geq N - 2$ and
$\pi: h^{-1}({\mathcal C}_j)\cap \partial I^N \to {\mathbb R}^{N-1}\setminus\{0\}$
is essential (where $\pi$ is defined as in \eqref{eq-3b.1} for $p_i=\tfrac 1 2,\,\forall\, i$);
\item[$(a_3)\;$] If there are integers $k$ and $l,$ with $k < l,$ such that ${\widetilde{X}_k} = {\widetilde{X}_l}\,,$
then there exists a finite sequence $(z_i)_{k\leq i\leq l}\,,$ with $z_i\in {\mathcal D}_i$ and
$\psi_i(z_i) = z_{i+1}$ for each $i=k,\dots,l-1,$ such that $z_l = z_k\,,$ that is,
$z_k$ is a fixed point of $\psi_{l-1}\circ\dots\circ\psi_{k}\,.$
\end{itemize}
\end{theorem}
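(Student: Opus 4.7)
The plan is to treat $(a_2)$ as the central step and derive $(a_1)$ and $(a_3)$ from analogous compositions of the stretching hypothesis. The key technical device is an iterated stretching argument which, given any path $\gamma$ in $X_j$ joining $X_{\ell}^{j}$ to $X_{r}^{j}$, produces a nested sequence of sub-paths $\sigma_{1}\supseteq \sigma_{2}\supseteq\cdots$ of $\gamma$ defined on a decreasing sequence of closed sub-intervals $I_{1}\supseteq I_{2}\supseteq\cdots$ of the domain of $\gamma$, such that the $n$-fold composition $\psi_{j+n-1}\circ\cdots\circ\psi_{j}$ is defined and continuous on $\overline{\sigma_n}$ and maps it onto a path joining the two sides of $X_{j+n}$. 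One builds $\sigma_n$ from $\sigma_{n-1}$ by applying the stretching $(\mathcal{D}_{j+n-1},\mathcal{K}_{j+n-1},\psi_{j+n-1}):\widetilde{X}_{j+n-1}\stretchx\widetilde{X}_{j+n}$ to the image path $\psi_{j+n-2}\circ\cdots\circ\psi_{j}\circ\sigma_{n-1}$ and restricting $\sigma_{n-1}$ to the parameter sub-interval thus selected.

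For $(a_2)$, fix $j$ and let $\mathcal{K}_{j}^{\infty}$ denote the compact set of points of $\mathcal{K}_{j}$ whose full forward orbit is defined and lies successively in $\mathcal{K}_{j+1},\mathcal{K}_{j+2},\dots$; it is a countable intersection of closed subsets of $\mathcal{K}_{j}$. By the compactness of $[0,1]$ the intersection $\bigcap_{n}I_n$ is nonempty, and for every $t$ in this intersection the point $\gamma(t)$ lies in $\overline{\sigma_n}$ for every $n$, so all of its forward iterates are defined and stay in the appropriate compact sets, i.e.\ $\gamma(t)\in\mathcal{K}_{j}^{\infty}$. Consequently $\mathcal{K}_{j}^{\infty}$ cuts the arcs between $X_{\ell}^{j}$ and $X_{r}^{j}$ in $X_{j}$. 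Theorem \ref{th-3b.2a}, applied with $n=1$ and $i_1=N$ to the cutting set $\mathcal{S}_{N}:=\mathcal{K}_{j}^{\infty}$, produces a compact connected subset $\mathcal{C}_{j}\subseteq\mathcal{K}_{j}^{\infty}$ with the required dimension and essentiality properties; every point of $\mathcal{C}_{j}$ has a well-defined forward orbit by construction.

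For $(a_3)$, supposing $\widetilde{X}_{k}=\widetilde{X}_{l}$ with $k<l$, the same iterated stretching stopped after $l-k$ steps shows that $\Psi:=\psi_{l-1}\circ\cdots\circ\psi_{k}$ stretches $\widetilde{X}_{k}$ to itself along paths, with compact set $\widetilde{\mathcal{K}}$ consisting of the points of $\mathcal{K}_{k}$ whose orbit segment of length $l-k-1$ lies successively in $\mathcal{K}_{k+1},\dots,\mathcal{K}_{l-1}$. Theorem \ref{th-3c.1} then yields a fixed point $z_{k}\in\widetilde{\mathcal{K}}$ of $\Psi$, and its iterates $z_{i}:=\psi_{i-1}\circ\cdots\circ\psi_{k}(z_{k})$ furnish the required periodic sequence. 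For $(a_1)$, the finite-window version of the argument produces, for each $n\geq 1$, a nonempty compact set $E_{n}\subseteq\prod_{k=-n}^{n}X_{k}$ of admissible orbit segments; their preimages in the Tychonoff-compact space $\prod_{k\in\mathbb{Z}}X_{k}$ form a decreasing family of nonempty closed sets, and any point in its intersection is the desired bi-infinite sequence $(w_{k})$.

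The main obstacle is establishing the cutting property of $\mathcal{K}_{j}^{\infty}$, since the dimension and essentiality conclusions in $(a_2)$ are then immediate from Theorem \ref{th-3b.2a}. Cutting requires that the sub-paths produced at the successive stretching steps be genuinely nested restrictions of the same fixed initial path $\gamma$, so that the compactness of the parameter interval $[0,1]$ yields a single parameter value whose image has full forward orbit. One must also be careful that the sub-interval selected by the stretching of $\psi_{j+n-1}$ on the image path can be identified with a sub-interval of the domain of $\sigma_{n-1}$, which is automatic since $\sigma_{n-1}$ and each of its successive images share a common parameter interval.
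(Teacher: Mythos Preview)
Your proposal is correct and follows essentially the same route as the paper: the paper also proves $(a_3)$ by observing that the composition $\psi_{l-1}\circ\cdots\circ\psi_k$ stretches $\widetilde{X}_k$ to itself and invoking Theorem~\ref{th-3c.1}; proves $(a_2)$ by the very nested sub-path construction you describe (showing the forward-invariant set $\mathcal{S}=\mathcal{K}_j^{\infty}$ cuts the arcs) and then applying Theorem~\ref{th-3b.2a} with $n=1$; and obtains $(a_1)$ by what it calls a ``standard diagonal argument,'' for which your Tychonoff formulation on $\prod_{k\in\mathbb Z}X_k$ is an equivalent packaging. The only cosmetic differences are the order of presentation and that the paper writes ``$i_1=0$'' where you (correctly, given that $X^-$ is built from the $x_N$-faces) write $i_1=N$.
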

\begin{proof}
We prove the conclusions of the theorem in the reverse order. So,
let's start with the verification of $(a_3).$ By the assumptions
and the definition of the ``stretching along the paths'' property, it
is easy to check that
\begin{equation}\label{eq-3c.comp}
({\mathcal D},\psi_{l-1}\circ\dots\circ\psi_k): {\widetilde{X}_k}
\stretchx {\widetilde{X}_{l}},
\end{equation}
where
$$\mathcal D:=\{z\in \mathcal D_k:\psi_{j}\circ\dots\circ\psi_k(z)\in\mathcal D_{j+1},\,\forall j=k,\dots,l-1\}.$$
With the positions $\widetilde X={\widetilde{X}_k} =
{\widetilde{X}_l}$ and $\psi=\psi_{l-1}\circ\dots\circ\psi_k,$
we read condition \eqref{eq-3c.comp} as $({\mathcal D},\psi): {\widetilde{X}}
\stretchx {\widetilde{X}}$ and therefore the thesis follows immediately by Theorem \ref{th-3c.1}.
More precisely, if we like to put in evidence the role of the compact sets ${\mathcal K}_k$'s,
for $({\mathcal D}_k,{\mathcal K}_k,\psi_k): {\widetilde{X}_k}  \stretchx {\widetilde{X}_{k+1}},$
we have that
\begin{equation*}
({\mathcal D},{\mathcal K},\psi): {\widetilde{X}}\stretchx {\widetilde{X}},
\end{equation*}
where we have set
$${\mathcal K}:=\{z\in \mathcal K_k:\psi_{j}\circ\dots\circ\psi_k(z)\in\mathcal K_{j+1},\,\forall j=k,\dots,l-1\}.$$
As regards $(a_2),$ without loss of generality, we can assume
$j=0.$ Recall that by Definition \ref{def-3c.2}, since $({\mathcal
D}_k,\psi_k): {\widetilde{X}_k}  \stretchx
{\widetilde{X}_{k+1}},\,\forall\, k\in {\mathbb Z},$ it follows
that for any $k$ there exists a compact set $\mathcal K_k\subseteq
X_k$ such that $\psi_k$ is continuous on ${\mathcal K_k}$ and for
every path $\gamma$ with ${\overline{\gamma}}\subseteq X_k$ and
${\overline{\gamma}}\cap X_{\ell}^k\not=\emptyset,\; {\overline{\gamma}}\cap
X_{r}^k\not=\emptyset,$ there is a sub-path $\sigma$ of $\gamma$
such that ${\overline{\sigma}}\subseteq {\mathcal K}_k$ and
$\psi({\overline{\sigma}})\subseteq X_{k+1},$ with
$\psi({\overline{\sigma}})\cap X_{\ell}^{k+1}\not=\emptyset,\;
\psi({\overline{\sigma}})\cap X_{r}^{k+1}\not=\emptyset.$ Let us define
the closed set
\begin{equation}
\mathcal S:=\{z\in {\mathcal K}_0: \psi_j\circ\dots\circ\psi_0(z)\in
{\mathcal K}_{j+1},\,\forall j\geq 0\}
\end{equation}
and fix a path $\gamma_0$ such that $\overline{{\gamma}_0}\subseteq X_0$
and $\overline{{\gamma}_0}\cap X_{\ell}^0\not=\emptyset,\;
\overline{{\gamma}_0}\cap X_{r}^0\not=\emptyset.$ Then, since
$({\mathcal D}_0,\psi_0): {\widetilde{X}_0} \stretchx
{\widetilde{X}_{1}},$ there exists a sub-path $\gamma_1$ of
$\gamma_0$ with $\overline{{\gamma}_1}\subseteq \mathcal K_0\subseteq
X_0$ such that $\psi_0({\overline{{\gamma}_1}})\subseteq X_1$ and
$\psi_0({\overline{{\gamma}_1}})\cap X_{\ell}^1\not=\emptyset,\;
\psi_0({\overline{{\gamma}_1}})\cap X_{r}^1\not=\emptyset.$ Similarly,
there exists a sub-path $\sigma_2$ of
$\sigma_1:=\psi_0({\gamma}_1)$ with $\overline{{\sigma}_2}\subseteq
\mathcal K_1\subseteq\mathcal D_1$ and such that
$\psi_1({\overline{{\sigma}_2}})\subseteq X_2,\;
\psi_1({\overline{{\sigma}_2}})\cap X_{\ell}^2\not=\emptyset,\;
\psi_1({\overline{{\sigma}_2}})\cap X_{r}^2\not=\emptyset.$ Defining
$$\Gamma_2:=\{x\in\overline{{\gamma}_1}:\psi_0(x)\in\overline{{\sigma}_2}\}\subseteq \{z\in {\mathcal K}_0:\psi_0(z)\in
{\mathcal K}_{1}\}
$$
and proceeding by induction, we can find a decreasing sequence of
nonempty compact sets
$$\Gamma_0:=\overline{{\gamma}_0}\supseteq\Gamma_1:=\overline{{\gamma}_1}
\supseteq\Gamma_2\supseteq\dots\supseteq\Gamma_n\supseteq\Gamma_{n+1}\supseteq\dots$$
such that
$\psi_j\circ\dots\circ\psi_0(\Gamma_{j+1})\subseteq
X_{j+1},\, \psi_j\circ\dots\circ\psi_0(\Gamma_{j+1})\cap
X_{\ell}^{j+1}\not=\emptyset,\;
\psi_j\circ\dots\circ\psi_0(\Gamma_{j+1})\cap
X_{r}^{j+1}\not=\emptyset,$ for $j\geq 0.$
Moreover, for every $i\geq 1,$ we have that
$$\Gamma_{i+1}\subseteq \{z\in {\mathcal K}_0:
\psi_{j-1}\circ\dots\circ\psi_0(z)\in
{\mathcal K}_{j},\,\forall \,j: 1\leq j\leq i\}.
$$
It is easy to see that
${\displaystyle{\cap_{j=0}^{+\infty}\,\Gamma_j\ne\emptyset}}$ and for any
${\displaystyle{z\in\cap_{j=0}^{+\infty}\,\Gamma_j}}$ it holds that
$\psi_n\circ\dots\circ\psi_0(z)\in\mathcal K_{n+1},\,\forall
n\in\mathbb N.$ In this way we have shown that any path
$\gamma_0,$ with $\overline{{\gamma}_0}$ joining the two sides of
$X_0^{-},$ intersects $\mathcal S,$ i.e. $\mathcal S$ cuts the
arcs between $X^{0}_{\ell}$ and $X^{0}_{r}$ in $X_0.$ Obviously
any point belonging to the intersection of $\overline{{\gamma}_0}$ with
$\mathcal S$ generates a sequence with the properties required in
$(a_2).$
The existence of the connected compact set ${\mathcal C}_0
\subseteq {\mathcal D}_0$ as in $(a_2)$ follows from Theorem \ref{th-3b.2a}, setting $n=1$ and $i_1=0.$ \\
Conclusion $(a_1)$ follows now from $(a_2)$ by a standard diagonal
argument already employed in previous works (see, e.g., \cite[Proposition 5]{KeKoYo-01},
\cite[Theorem 2.2]{PaZa-04a}).
\end{proof}

\begin{remark}\label{rem-3c.3}
An apparently more general version of Theorem \ref{th-3c.2} can be obtained by assuming
the $X_k$'s to be contained in possibly different metric spaces $Z_k$'s.
\\
If, at any step $k\in{\mathbb Z},$ we have the further information that $({\mathcal D}_k,{\mathcal K}_k,\psi_k):
{\widetilde{X}_{k}} \stretchx {\widetilde{X}_{k+1}}\,,$ then, in each of the corresponding conclusions
$(a_1),$ $(a_2),$ $(a_3)$ we
can be more precise and add that $w_k\in {\mathcal K}_k\,,\,y_k\in {\mathcal K}_k\,,$ or $z_k\in {\mathcal K}_k\,,$
respectively.
\hfill$\lhd$\\
\end{remark}

We end this paper with a few consequences of Theorem \ref{th-3c.2}.

\bigskip
Our applications deal with discrete dynamical systems exhibiting a chaotic behavior.
Due to the many different definitions of chaos available in the literature, we state in a
precise manner the one we use. The same concept of chaos has been already considered
in \cite{PaZa-04a, PaZa-04b, PaZa->, PiZa-05} as well as
in previous works by other authors (see, for instance, \cite{Zg-96}).

\noindent
\begin{definition}\label{def-3c.3}
Let $Z$ be a metric space,  $\psi: Z\supseteq D_{\psi} \to Z$ be a map
and let ${\mathcal D}\subseteq D_{\psi}\,.$ Assume also that $m\geq 2$ is an integer.
We say that
\textit{$\psi$ induces chaotic dynamics on $m$ symbols in the set ${\mathcal D}$}
if there exist $m$ nonempty pairwise disjoint compact sets
$${\mathcal K}_0\,,\; {\mathcal K}_1\,,\dots, {\mathcal K}_{m-1}\,\subseteq {\mathcal D},$$
such that, for each two$-$sided sequence $(s_i)_{i\in {\mathbb Z}} \in  \{0,\dots,m-1\}^{\mathbb Z},$
there exists a corresponding sequence $(w_i)_{i\in {\mathbb Z}}\in {\mathcal D}^{\mathbb Z}$
such that
\begin{equation}\label{eq-3c.ct1}
w_i \,\in\, {\mathcal K}_{s_i}\;\;\mbox{ and }\;\, w_{i+1} = \psi(w_i),\;\; \forall\, i\in {\mathbb Z}
\end{equation}
and, whenever $(s_i)_{i\in {\mathbb Z}}$ is a $k-$periodic sequence (that is,
$s_{i+k} = s_i\,,\forall i\in {\mathbb Z}$) for some $k\geq 1,$
there exists a $k-$periodic sequence $(w_i)_{i\in {\mathbb Z}}\in {\mathcal D}^{\mathbb Z}$
satisfying \eqref{eq-3c.ct1}. When we want to stress the role of the
${\mathcal K}_j$'s, we also say that
\textit{$\psi$ induces chaotic dynamics on $m$ symbols in the set ${\mathcal D}$ relatively to
$({\mathcal K}_0,\dots,{\mathcal K}_{m-1})$}.
\end{definition}

\begin{remark}\label{rem-3c.4}
We recall that the property expressed in \eqref{eq-3c.ct1} corresponds
(in the case of two symbols) to the definition
of {\textit{chaos in the sense of coin$-$tossing}}
considered  by Kirchgraber and Stoffer in \cite{KiSt-89}. The same kind of chaotic behavior
is also obtained by Kennedy, Ko{\c{c}}ak e Yorke in \cite[Proposition 5]{KeKoYo-01}.
As a further addition with respect to \cite{KeKoYo-01} and \cite{KiSt-89},
our definition takes account also of the presence of periodic itineraries
generated by periodic points.
\hfill$\lhd$\\
\end{remark}

\begin{theorem}\label{th-3c.3}
Assume there is an oriented $N-$dimensional rectangle ${\widetilde{X}} = (X,X^-)$
of a metric space $Z$ and a map $\psi: Z\supseteq D_{\psi}\to Z.$
Let ${\mathcal D}\subseteq X \cap D_{\psi}$ and suppose there exist
$m\geq 2$ nonempty and pairwise disjoint compact sets
${\mathcal K}_0\,,{\mathcal K}_1\,,\dots, {\mathcal K}_{m-1}\,\subseteq {\mathcal D}$ such that
$$({\mathcal D},{\mathcal K}_i,\psi): {\widetilde{X}}  \stretchx {\widetilde{X}}\,,
\quad\mbox{for } i =0,\dots, m-1.$$
Then the following conclusions hold:
\begin{itemize}
\item[$(b_1)\;$] The map $\psi$ induces chaotic dynamics on $m$ symbols in the set ${\mathcal D}$
relatively to
$({\mathcal K}_0,\dots,{\mathcal K}_{m-1});$
\item[$(b_2)\;$] For each sequence of $m$ symbols $\textbf{s}=(s_n)_n\in \{0,1,\dots,m-1\}^{\mathbb N},$
there exists a compact connected set ${\mathcal C}_{\textbf{s}}\subseteq {\mathcal K}_{s_0}$
which cuts the arcs between $X_{\ell}$ and $X_{r}$ in $X$ and such that,
for every $w\in {\mathcal C}_{\textbf{s}}\,,$
there is a sequence $(y_n)_{n}$ with $y_0 = w$ and
$$y_n \in {\mathcal K}_{s_n}\,,\quad \psi(y_n) = y_{n+1}\,,\;\forall\, n\geq 0.$$
The dimension of ${\mathcal C}_{\textbf{s}}$ at each point is at least $N-1.$
Moreover, $\mbox{\rm dim}({\mathcal C}_{\textbf{s}}\cap \vartheta X)\geq N - 2$ and
$\pi: h^{-1}({\mathcal C}_{\textbf{s}})\cap \partial I^N \to {\mathbb R}^{N-1}\setminus\{0\}$
is essential (where $\pi$ is defined as in \eqref{eq-3b.1} for $p_i=\tfrac 1 2,\,\forall\, i$).
\end{itemize}
\end{theorem}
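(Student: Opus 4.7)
The plan is to deduce both conclusions directly from Theorem \ref{th-3c.2} by considering the given symbol sequence as a prescription for which compact set $\mathcal{K}_i$ to use at each stage. More precisely, for any two-sided sequence $(s_i)_{i\in\mathbb{Z}} \in \{0,\dots,m-1\}^{\mathbb{Z}}$ (or a one-sided sequence, for $(b_2)$), I would set
\[
\widetilde{X}_k := \widetilde{X}, \qquad \mathcal{D}_k := \mathcal{D}, \qquad \psi_k := \psi, \qquad \mathcal{K}_k := \mathcal{K}_{s_k}
\]
for every index $k$. The standing hypothesis $(\mathcal{D},\mathcal{K}_{s_k},\psi): \widetilde{X} \stretchx \widetilde{X}$ is exactly the requirement $(\mathcal{D}_k,\mathcal{K}_k,\psi_k): \widetilde{X}_k \stretchx \widetilde{X}_{k+1}$ needed to invoke Theorem \ref{th-3c.2} in the sharper form pointed out in Remark \ref{rem-3c.3}, with $w_k, y_k, z_k$ living in $\mathcal{K}_k$ rather than merely in $\mathcal{D}_k$.

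For $(b_1)$, applying conclusion $(a_1)$ of Theorem \ref{th-3c.2} to this data yields a sequence $(w_i)_{i\in\mathbb{Z}}$ with $w_i \in \mathcal{K}_{s_i}$ and $\psi(w_i)=w_{i+1}$, which is precisely condition \eqref{eq-3c.ct1} in Definition \ref{def-3c.3}. If $(s_i)_{i\in\mathbb{Z}}$ is $k$-periodic, then $\widetilde{X}_0 = \widetilde{X}_k$ (trivially, since every $\widetilde{X}_j$ equals $\widetilde{X}$) and the sequence of stretching pairs repeats with period $k$. Conclusion $(a_3)$, in the strengthened form of Remark \ref{rem-3c.3}, then furnishes $z_0,\dots,z_{k-1}$ with $z_i \in \mathcal{K}_{s_i}$, $\psi(z_i)=z_{i+1}$ and $z_k=z_0$, which extends periodically to the desired $k$-periodic orbit realizing the symbolic itinerary. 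Since the $\mathcal{K}_i$'s are pairwise disjoint and compact, distinct symbol sequences produce distinct orbits, completing $(b_1)$.

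For $(b_2)$, fix a one-sided sequence $\mathbf{s}=(s_n)_{n\in\mathbb{N}}$ and apply $(a_2)$ of Theorem \ref{th-3c.2} with $j=0$ to the same setup restricted to $k\ge 0$. This produces a compact connected set $\mathcal{C}_{\mathbf{s}} \subseteq \mathcal{K}_{s_0}$ (the refinement from $\mathcal{D}_0$ to $\mathcal{K}_{s_0}$ coming from Remark \ref{rem-3c.3}) which cuts the arcs between $X_\ell$ and $X_r$ in $X$, and every $w\in\mathcal{C}_{\mathbf{s}}$ is the initial point of a forward orbit $(y_n)_{n\ge 0}$ with $y_n\in\mathcal{K}_{s_n}$ and $\psi(y_n)=y_{n+1}$. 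The dimension bounds at each point and on $\mathcal{C}_{\mathbf{s}}\cap \vartheta X$, as well as the essentiality of $\pi\circ h^{-1}$ on $\mathcal{C}_{\mathbf{s}}\cap\partial I^N$, are inherited word-for-word from $(a_2)$.

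The only delicate point I foresee is making sure Remark \ref{rem-3c.3} genuinely allows one to place the selected orbits inside the prescribed $\mathcal{K}_{s_k}$'s (and not just inside $\mathcal{D}$), which is essential for the symbolic coding to be well-defined and for the conclusion $\mathcal{C}_{\mathbf{s}} \subseteq \mathcal{K}_{s_0}$. Inspecting the proof of Theorem \ref{th-3c.2}, the set $\mathcal{S}$ constructed there is literally $\{z\in\mathcal{K}_0: \psi_j\circ\cdots\circ\psi_0(z)\in\mathcal{K}_{j+1},\;\forall\, j\ge 0\}$, so the strengthening is automatic; the pairwise disjointness of the $\mathcal{K}_i$'s then guarantees that the assignment of symbols to orbits is unambiguous, so no further argument is required.
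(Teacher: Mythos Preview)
Your proposal is correct and follows essentially the same route as the paper: apply Theorem~\ref{th-3c.2} with $\widetilde{X}_k=\widetilde{X}$, $\psi_k=\psi$, ${\mathcal D}_k={\mathcal D}$ and ${\mathcal K}_k={\mathcal K}_{s_k}$, then read off $(b_1)$ from $(a_1)$ and $(a_3)$ and $(b_2)$ from $(a_2)$, using Remark~\ref{rem-3c.3} to place the orbit points in the prescribed ${\mathcal K}_{s_k}$'s. The remark that distinct symbol sequences yield distinct orbits is true but not needed for Definition~\ref{def-3c.3}, so it can be omitted.
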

\begin{proof}
The result easy follows by applying Theorem
\ref{th-3c.2} with the positions $X_k=X$ and $\psi_k=\psi,\,
\forall k\in \mathbb Z\,,$ and noting that, in view of Remark \ref{rem-3c.3},
conclusion $(b_2)$ is just a restatement of conclusion $(a_2)$ in Theorem \ref{th-3c.2},
while conclusion $(b_1)$ comes from conclusions $(a_1)$ and $(a_3)$ in Theorem \ref{th-3c.2}
and by Definition \ref{def-3c.3}.
\end{proof}

\bigskip

Several definitions of chaotic dynamics relate the behavior of the iterates of the map $\psi$
to a particular operator (the Bernoulli shift) acting on the set of sequences of $m$ symbols.
Our Definition \ref{def-3c.3} and the corresponding conclusion $(b_1)$ achieved in Theorem
\ref{th-3c.3} allow us to derive some facts in such a direction as well. To this aim, we first
recall some basic notions,
following \cite{Wi-88}.

\medskip

\noindent
Let $m\geq 2$ be a positive integer. We denote by
$$\Sigma_m = \{0,\dots, m-1\}^{\mathbb Z}$$
the set of the two$-$sided sequences of $m$ symbols. The set
$\Sigma_m$ can be endowed with a standard distance
\begin{equation}\label{eq-3c.dist}
d(\textbf{s}', \textbf{s}'') := \sum_{i\in {\mathbb Z}} \frac{|s'_i - s''_i|}{m^{|i| + 1}}\,,\quad
\mbox{ where }\; \textbf{s}'=(s'_i)_{i\in {\mathbb Z}}\,,\,
\textbf{s}''=(s''_i)_{i\in {\mathbb Z}}\,\in \Sigma_m\,,
\end{equation}
so that $(\Sigma_m,d)$ is a compact metric space.
The Bernoulli shift $\sigma$ is the homeomorphism on $\Sigma_m$ defined by
\begin{equation}\label{eq-3c.bs}
\sigma((s_i)_i):= (s_{i+1})_i\,
\end{equation}
and it represents one of the paradigms for chaotic dynamical systems in a (compact) metric space.
In particular (as shown in \cite[Theorem 7.12]{Wa-82}), $\sigma$ has positive topological entropy, expressed by
$$h_{\mbox{\footnotesize\rm top}}(\sigma) = \log(m)$$
(see \cite{Wa-82} for the pertinent definitions and more details).

Let $\Lambda$ be a compact metric space and let $\psi: \Lambda\to \Lambda$ be a continuous map.
We say that $\psi$ is
{\textit{semiconjugate to the two$-$sided $m-$shift}} if there exists a
continuous surjective mapping
$g: \Lambda \to \Sigma_m$ such that
\begin{equation}\label{eq-3c.con}
g \circ \psi = \sigma\circ g.
\end{equation}
In a similar manner, if we denote by
$$\Sigma_m^+ = \{0,\dots, m-1\}^{\mathbb N}$$
the set of the one$-$sided sequences of $m$ symbols, endowed with a distance
analogous to the one defined in \eqref{eq-3c.dist},
we say that $\psi$ is
{\textit{semiconjugate to the one$-$sided $m-$shift}} if there exists a
continuous surjective mapping
$g: \Lambda \to \Sigma_m^+$ such that
\eqref{eq-3c.con}
holds.

\bigskip

\bigskip

The following result (which is substantially a standard fact)
connects the concept of semiconjugation with the Bernoulli shift
to the one of chaotic dynamics expressed in Definition \ref{def-3c.3}.
Its proof could be
easily adapted from similar arguments previously appeared in the literature
(see, for instance
\cite{KeKoYo-01, KeYo-01}
for semidynamical systems induced by continuous maps of metric spaces),
but, for sake of completeness, we provide here all the details.

\begin{lemma}\label{lem-3c.1}
Let $Z$ be a metric space,  $\psi: Z\supseteq D_{\psi} \to Z$ be a map
which is continuous on a set ${\mathcal D}\subseteq D_{\psi}\,$
and induces therein chaotic dynamics on $m\geq 2$ symbols
(relatively to
$({\mathcal K}_0,\dots,{\mathcal K}_{m-1})$).
Then, there exists a nonempty compact set
$$\Lambda \subseteq \bigcup_{j=0}^{m-1} {\mathcal K}_j\,,$$
which is invariant for $\psi$ and such that $\psi|_{\Lambda}$ is
semiconjugate to the two$-$sided $m-$shift,
so that the topological entropy $h_{\mbox{\footnotesize\rm top}}(\psi)$ satisfies
$$h_{\mbox{\footnotesize\rm top}}(\psi)\geq \log(m).$$
Moreover, the subset
${\mathcal P}$ of $\Lambda$ made by the periodic points of $\psi$
is dense in $\Lambda$ and if we denote by $g: \Lambda\to \Sigma_m$
the continuous surjection in \eqref{eq-3c.con}, it holds also that
the counterimage through $g$ of any $k-$periodic sequence in
$\Sigma_m$ contains at least one $k-$periodic point.
\end{lemma}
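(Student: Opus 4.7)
The plan is to manufacture $\Lambda$ through the set of bi-infinite trajectories lying in $\mathcal{L}:=\bigcup_{j=0}^{m-1}\mathcal{K}_j$ and then code it by the shift. Since the $\mathcal{K}_j$ are pairwise disjoint nonempty compacta, $\mathcal{L}$ is compact and there is $\delta>0$ with $\mbox{\rm dist}(\mathcal{K}_i,\mathcal{K}_j)\ge \delta$ whenever $i\neq j$. First I would introduce
$$
\Omega:=\{\,(w_i)_{i\in\mathbb{Z}}\in\mathcal{L}^{\mathbb{Z}}\,:\,\psi(w_i)=w_{i+1}\;\forall\, i\in\mathbb{Z}\,\},
$$
which is closed in the compact product space $\mathcal{L}^{\mathbb{Z}}$ (by continuity of $\psi$ on $\mathcal{D}\supseteq\mathcal{L}$), hence compact. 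The shift $\hat\sigma:\Omega\to\Omega$, $(\hat\sigma w)_i:=w_{i+1}$, is a homeomorphism; the projection $\pi_0:\Omega\to Z$, $\pi_0((w_i)):=w_0$, is continuous and satisfies $\psi\circ\pi_0=\pi_0\circ\hat\sigma$. Set $\Lambda:=\pi_0(\Omega)\subseteq\mathcal{L}$: it is compact and fully $\psi$-invariant (i.e.\ $\psi(\Lambda)=\Lambda$), since $\hat\sigma$ is bijective on $\Omega$. Nonemptiness is ensured by Definition~\ref{def-3c.3} applied to the constant sequence $s_i\equiv 0$, which even gives a fixed point of $\psi$ in $\mathcal{K}_0$.

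I would then define the coding $g:\Omega\to\Sigma_m$ by $g((w_i)):=(s_i)$ with $w_i\in\mathcal{K}_{s_i}$. Well-definedness follows from pairwise disjointness; continuity from the uniform gap $\delta$ (a perturbation small in the product metric affects only finitely many coordinates, each by less than $\delta$, so the itinerary is preserved on arbitrarily long central windows); surjectivity is exactly the content of Definition~\ref{def-3c.3}; and $g\circ\hat\sigma=\sigma\circ g$ is tautological. Thus $(\Omega,\hat\sigma)$ is the natural extension of $(\Lambda,\psi)$ through $\pi_0$ and simultaneously a topological extension of $(\Sigma_m,\sigma)$ through $g$, so $\psi|_\Lambda$ is semiconjugate to the two-sided $m$-shift via this inverse limit. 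Since topological entropy is preserved by natural extensions and does not increase under factor maps,
$$
h_{\mbox{\footnotesize\rm top}}(\psi|_\Lambda)=h_{\mbox{\footnotesize\rm top}}(\hat\sigma|_\Omega)\geq h_{\mbox{\footnotesize\rm top}}(\sigma)=\log m.
$$

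For every $k$-periodic $\textbf{s}\in\Sigma_m$, Definition~\ref{def-3c.3} directly furnishes a $k$-periodic $(w_i)\in\Omega$ realizing $\textbf{s}$; its zeroth coordinate is a $k$-periodic point of $\psi$ in $\mathcal{K}_{s_0}\subseteq\Lambda$ mapping to $\textbf{s}$ under $g$, which settles the last claim. For density of the periodic set $\mathcal{P}$ in $\Lambda$, the natural strategy is to approximate an arbitrary itinerary $\textbf{s}=g((w_i))$ by periodic sequences $\textbf{s}^{(N)}$ agreeing with $\textbf{s}$ on $[-N,N]$, realize each by a periodic orbit given by Definition~\ref{def-3c.3}, and extract a subsequential limit in the compact set $\Omega$. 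The main obstacle is that this limit orbit need not coincide with $(w_i)$: two distinct points of $\Lambda$ can share the same itinerary in the absence of an expansiveness-type condition. The standard remedy, which I would adopt in the write-up, is to redefine $\Lambda:=\overline{\mathcal{P}}\subseteq\mathcal{L}$ from the outset: this keeps $\Lambda$ compact and $\psi$-invariant (because $\psi(\mathcal{P})\subseteq\mathcal{P}$ and $\psi$ is continuous on $\mathcal{L}$), makes density of $\mathcal{P}$ tautological, and is compatible with both the semiconjugation and the entropy estimate because the periodic itineraries are dense in $\Sigma_m$, so the coding already constructed on $\Omega$ descends coherently to this smaller invariant set.
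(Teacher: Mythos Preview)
Your inverse-limit construction of $\Omega$ and the entropy computation through it are correct and arguably cleaner than the paper's route, and your eventual choice $\Lambda:=\overline{\mathcal P}$ is exactly the set the paper works with. The gap is in the semiconjugation itself. Your coding $g$ is a map $\Omega\to\Sigma_m$, not $\Lambda\to\Sigma_m$; having $(\Omega,\hat\sigma)$ as a common extension of $(\Lambda,\psi)$ and $(\Sigma_m,\sigma)$ does \emph{not} produce a factor map $\Lambda\to\Sigma_m$, because $\pi_0$ is typically not injective (a point of $\Lambda$ may sit on several distinct bi-infinite $\mathcal L$-orbits). The phrase ``the coding \dots\ descends coherently to this smaller invariant set'' is exactly the step that needs an argument, and in general no such descent exists. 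Take $Z=\{0,1\}^{\mathbb N}$ with $\psi$ the one-sided $2$-shift and $\mathcal K_j=\{x:x_0=j\}$: Definition~\ref{def-3c.3} is satisfied and $\overline{\mathcal P}=Z$, yet any continuous $g:Z\to\Sigma_2$ with $g\circ\psi=\sigma\circ g$ satisfies $g=\sigma^{-1}\circ g\circ\psi$, hence factors through every iterate $\psi^n$, hence is constant---so no surjective semiconjugacy $\Lambda\to\Sigma_2$ exists at all.

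For comparison, the paper does not pass through $\Omega$. It first builds the one-sided itinerary map on $\Lambda_0=\bigcap_{n\ge 0}\psi^{-n}(\mathcal L)$, then restricts to periodic points to obtain $g_1:\mathcal P\to\Sigma_m^{\,per}$ (a periodic point carries a canonical two-sided orbit), and finally asserts that $g_1$ extends by uniform continuity to $g:\overline{\mathcal P}\to\Sigma_m$. That extension is precisely the ``descent'' you gesture at, so the two approaches meet at the same delicate point; in the one-sided-shift example one has $w_n=(0^{n-1}1)^\infty\to 0^\infty$ in $\mathcal P$ while $g_1(w_n)_{-1}=1\neq 0=g_1(0^\infty)_{-1}$, so $g_1$ is not continuous there either. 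What your argument \emph{does} establish cleanly is the semiconjugacy $(\Omega,\hat\sigma)\to(\Sigma_m,\sigma)$, the bound $h_{\mbox{\footnotesize\rm top}}(\psi)\ge\log m$ via the natural-extension identity, and the periodic-point lifting; only the literal existence of a continuous surjection $g:\Lambda\to\Sigma_m$ is left unproved.
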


\begin{proof}
Setting ${\mathcal K}:=\bigcup_{j=0}^{m-1} {\mathcal K}_j,$ we define
$$\Lambda_0:=\{w\in \mathcal K:\psi^i(w)\in\mathcal K,\,
\forall i\in \mathbb N\}=\bigcap_{i=0}^{+\infty} \psi^{-i}(\mathcal K)$$
and
$${\mathcal P}:= \{x\in \Lambda_0\,:\, \exists k\geq 1 \mbox{ with } \psi^{k}(x) = x\}.$$
Since $\mathcal K$ is compact and $\psi$ is continuous on ${\mathcal K},$
it follows immediately that also  $\Lambda_0$ is compact and that
$\psi(\Lambda_0)\subseteq \Lambda_0$
(that is, $\Lambda_0$ is invariant for $\psi$).
Let us now define $g_0:\Lambda_0\to
\Sigma_m^+,$ as
\begin{equation*}
g_0(w):=(s_i)_{i\in\mathbb N} \Leftrightarrow \psi^i(w)\in \mathcal K_{s_i},\,\forall i\in\mathbb N.
\end{equation*}
By Definition \ref{def-3c.3},
the map $g_0$ turns out to be
surjective and the counterimage through $g_0$ of any $k-$periodic
sequence in $\Sigma_m^+$ contains at least one $k-$periodic point
(belonging to ${\mathcal P}$).
The continuity of $g_0$ comes from the continuity of
$\psi$ on $\Lambda_0\,,$ the choice of the distance
$d$ in \eqref{eq-3c.dist} and the fact that the sets ${\mathcal K}_j$
are compact and pairwise disjoint. Actually, $g_0$ turns out to be uniformly
continuous as it is defined on a compact metric space.
A direct inspection shows that the relation in
\eqref{eq-3c.con} is satisfied and therefore the map $g_0$ induces a
semiconjugation between $\psi|_{\Lambda_0}$ and the
one$-$sided $m-$shift.

Let
$$\Sigma_m^{\,per}\subseteq \Sigma_m$$
be the set of the periodic two$-$sided sequences of $m$ symbols.
Since every two$-$sided periodic sequence of $m$ symbols determines
a one$-$sided periodic sequence of $m$ symbols (and viceversa),
we have that
the map $g_0|_{\mathcal P}$ may be considered as a function with values
in $\Sigma_m^{\,per}\,.$ In fact, for every $w\in {\mathcal P},$ we have
\begin{equation*}
g_0(w)=(s_i)_{i\in\mathbb Z}\in \Sigma_m^{\,per}
\Leftrightarrow \psi^i(w)\in \mathcal K_{s_i},\,\forall i\in\mathbb N.
\end{equation*}
Thus, we can define a uniformly continuous and surjective map
$$g_1: {\mathcal P}\to \Sigma_m^{\,per},$$
by setting, for each $w\in {\mathcal P}:$
\begin{equation}\label{eq-3c.gper}
g_1(w):=(s_i)_{i\in\mathbb Z}\in \Sigma_m^{\,per}
\Leftrightarrow \psi^i(w)\in \mathcal K_{s_i},\,\forall i\in\mathbb Z.
\end{equation}
Notice that
$$g_1\circ \psi (w) = \sigma \circ g_1 (w),\quad\forall\, w\in {\mathcal P},$$
where $\sigma$ is the two$-$sided Bernoulli shift on $m$ symbols defined in \eqref{eq-3c.bs}.

Now, setting
$$\Lambda:= {\overline{{\mathcal P}}}\subseteq \Lambda_0\,,$$
it holds that $\psi(\Lambda)\subseteq \Lambda$, so that $\Lambda$
is compact and invariant for $\psi.$ At last, we
extend the uniformly continuous surjective mapping
$$g_1 : {\mathcal P} \to \Sigma_m^{\,per} \subseteq \Sigma_m$$
to a continuous surjective function
$$g: \Lambda \to \Sigma_m\,,$$
such that
$$g\circ \psi (x) = \sigma \circ g (x),\quad\forall\, x\in \Lambda.$$
\noindent From the above proved semiconjugacy condition and by
\cite[Theorem 7.2]{Wa-82}
it follows that
$$h_{\mbox{\footnotesize\rm top}}(\psi)\geq h_{\mbox{\footnotesize\rm top}}(\sigma) = \log(m).$$
Hence we see that all the properties listed in the statement of the lemma are satisfied.
The proof is complete.
\end{proof}

Clearly, in view of the above lemma, conclusion $(b_1)$ in Theorem \ref{th-3c.3} can
be reformulated in terms of a semiconjugation between $\psi$ and a Bernoulli shift.

\bigskip
The next consequence of Theorem \ref{th-3c.2} deals with a situation
which occurs in some ODE models (see, e.g., \cite{DaPa-04, PaZa-00, PaZa-04a})
where there are generalized rectangles linked each other by a stretching map.
We confine ourselves to the simpler case in which only two objects are involved.
More general examples could be considered as well.

\begin{corollary}\label{cor-3c.1}
Let ${\widetilde{\mathcal A}_0}$ and ${\widetilde{\mathcal A}_1}$ be oriented $N-$dimensional rectangles
of a metric space $Z,$ with ${\mathcal A}_0\cap {\mathcal A}_1 = \emptyset,$ and let
$\psi: Z\supseteq D_{\psi}\to Z$ be a map.
Assume there exist compact sets
${\mathcal K}_{i,j}$ for $i, j \in \{0,1\},$
with
$${\mathcal K}_{i,j}\subseteq {\mathcal A}_i\,\cap D_{\psi}\,,\quad \forall\, i,j = 0,1$$
such that
$$({\mathcal K}_{i,j},\psi): {\widetilde{\mathcal A}_i}  \stretchx {\widetilde{\mathcal A}_j}\,,
\quad\forall\, i,j = 0,1.$$
Then the following conclusions hold:
\begin{itemize}
\item[$(c_1)\;$] For any two$-$sided sequence of two symbols $\textbf{s} = (s_k)_{k \in {\mathbb Z}}\in
\{0,1\}^{\mathbb Z},$
there exists a sequence $(w_k)_{k\in {\mathbb Z}}$
such that $w_k\in {\mathcal K}_{s_k ,s_{k+1}}\subseteq{\mathcal A}_{s_k}$
and $\psi(w_k) = w_{k+1}\,,$ for all ${k\in {\mathbb Z}}\,;$
\item[$(c_2)\;$] For any one$-$sided sequence of two symbols $\textbf{s} = (s_n)_{n \in {\mathbb N}}
\in \{0,1\}^{\mathbb N},$
there exists a compact connected set ${\mathcal C}_{\textbf{s}} \subseteq
{\mathcal K}_{s_0 ,s_{1}}\subseteq{\mathcal A}_{s_0}$
which cuts the arcs between ${\mathcal A}^{s_0}_{\ell}$ and ${\mathcal A}^{s_0}_{r}$ in
${\mathcal A}_{s_0}$ and such that, for every $w\in {\mathcal C}_{\textbf{s}}\,,$
there is a sequence $(y_n)_{n}$ with $y_0 = w$ and
$$y_n \in {\mathcal K}_{s_n,s_{n+1}}\,,\quad \psi(y_n) = y_{n+1}\,,\;\forall\, n\geq 0.$$
The dimension of ${\mathcal C}_{\textbf{s}}$ at each point is at least $N-1.$
Moreover, $\mbox{\rm dim}({\mathcal C}_{\textbf{s}}\cap \,\vartheta {\mathcal A}_{s_0})\geq N - 2$ and
$\pi: h^{-1}({\mathcal C}_{\textbf{s}})\cap \partial I^N \to {\mathbb R}^{N-1}\setminus\{0\}$
is essential (where $\pi$ is defined as in \eqref{eq-3b.1} for $p_i=\tfrac 1 2,\,\forall\, i$);
\item[$(c_3)\;$] For any two$-$sided sequence of two symbols $\textbf{s} =
(s_k)_{k \in {\mathbb Z}}\in \{0,1\}^{\mathbb Z}$
which is $m$-periodic ($m\geq 1$),
there exists a $m-$periodic sequence $(w_k)_{k\in {\mathbb Z}}$
such that $w_k\in {\mathcal K}_{s_k ,s_{k+1}}\subseteq{\mathcal A}_{s_k}$
and $\psi(w_k) = w_{k+1}\,,$ for all ${k\in {\mathbb Z}}\,.$
\end{itemize}
\end{corollary}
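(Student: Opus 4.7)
The plan is to reduce the three conclusions to a direct application of Theorem \ref{th-3c.2} by interpreting each symbolic sequence $\textbf{s}$ as an itinerary through a double sequence of oriented rectangles, each copy of which is either ${\widetilde{\mathcal{A}}_0}$ or ${\widetilde{\mathcal{A}}_1}$ selected according to $\textbf{s}$. The disjointness hypothesis ${\mathcal{A}}_0 \cap {\mathcal{A}}_1 = \emptyset$ is what allows us to read off the itinerary unambiguously from a single orbit of $\psi$.

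More precisely, given a two-sided sequence $\textbf{s} = (s_k)_{k \in \mathbb{Z}} \in \{0,1\}^{\mathbb{Z}}$, I would define
\[
{\widetilde{X}_k} := {\widetilde{\mathcal{A}}_{s_k}}, \qquad {\mathcal{D}_k} := {\mathcal{K}_{s_k,\,s_{k+1}}}, \qquad \psi_k := \psi, \quad \forall\, k \in \mathbb{Z}.
\]
The hypothesis then reads $({\mathcal{D}_k}, \psi_k): {\widetilde{X}_k} \stretchx {\widetilde{X}_{k+1}}$ for every $k \in \mathbb{Z}$, which is exactly the assumption of Theorem \ref{th-3c.2}. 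Conclusion $(c_1)$ then follows verbatim from $(a_1)$: the sequence $(w_k)_k$ produced satisfies $w_k \in {\mathcal{D}_k} = {\mathcal{K}_{s_k,\,s_{k+1}}} \subseteq {\mathcal{A}_{s_k}}$ and $\psi(w_k) = w_{k+1}$.

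For $(c_3)$, given an $m$-periodic two-sided sequence $\textbf{s}$, I would apply conclusion $(a_3)$ with $k = 0$ and $l = m$: since $s_{k+m} = s_k$ for all $k$, we have ${\widetilde{X}_0} = {\widetilde{X}_m}$, so $(a_3)$ yields points $z_0, z_1, \dots, z_{m-1}$ with $z_i \in {\mathcal{K}_{s_i,\,s_{i+1}}}$, $\psi(z_i) = z_{i+1}$ and $z_m = z_0$. Extending this $m$-tuple periodically produces the desired $m$-periodic sequence $(w_k)_{k \in \mathbb{Z}}$. For $(c_2)$, given a one-sided sequence $(s_n)_{n \in \mathbb{N}}$, I would first extend it arbitrarily to a two-sided sequence (for instance, by setting $s_{-n} := s_0$ for $n \geq 1$), then apply $(a_2)$ at index $j = 0$. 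The compact connected set ${\mathcal{C}_0}$ furnished by $(a_2)$ lies in ${\mathcal{D}_0} = {\mathcal{K}_{s_0,\,s_1}} \subseteq {\mathcal{A}_{s_0}}$, cuts the arcs between ${\mathcal{A}^{s_0}_{\ell}}$ and ${\mathcal{A}^{s_0}_{r}}$ in ${\mathcal{A}_{s_0}}$, and carries forward orbits with $y_n \in {\mathcal{K}_{s_n,\,s_{n+1}}}$; setting ${\mathcal{C}_{\textbf{s}}} := {\mathcal{C}_0}$ gives all the topological information claimed (dimension at each point $\geq N-1$, boundary dimension $\geq N-2$, essentiality of $\pi$), since these are exactly the properties asserted in $(a_2)$.

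The only real subtlety, rather than a genuine obstacle, is bookkeeping around the two-index family ${\mathcal{K}_{i,j}}$: one must verify that Theorem \ref{th-3c.2} is applicable after the relabelling, which requires checking that the ``departure'' rectangle ${\widetilde{X}_k}$ and the ``target'' rectangle ${\widetilde{X}_{k+1}}$ for the stretching relation associated with the chosen ${\mathcal{D}_k} = {\mathcal{K}_{s_k,\,s_{k+1}}}$ match exactly ${\widetilde{\mathcal{A}}_{s_k}}$ and ${\widetilde{\mathcal{A}}_{s_{k+1}}}$, which is immediate from the hypotheses. The disjointness ${\mathcal{A}_0} \cap {\mathcal{A}_1} = \emptyset$ is used implicitly only to ensure that distinct itineraries $\textbf{s}$ produce genuinely different orbits (the symbol $s_k$ is determined by which rectangle $w_k$ lies in); it plays no role in invoking Theorem \ref{th-3c.2} itself.
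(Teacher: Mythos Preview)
Your proposal is correct and follows exactly the same approach as the paper's proof, which simply invokes Theorem \ref{th-3c.2} with $\psi_k=\psi$ and $X_k=\mathcal{A}_{s_k}$ (together with Remark \ref{rem-3c.3} for the localization in the $\mathcal{K}_k$'s). Your write-up is in fact more detailed than the paper's, spelling out the correspondence $(c_1)\leftrightarrow(a_1)$, $(c_2)\leftrightarrow(a_2)$, $(c_3)\leftrightarrow(a_3)$ and the harmless extension of a one-sided sequence to a two-sided one for $(c_2)$.
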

\begin{proof}
Recalling Remark \ref{rem-3c.3}, the result easy follows by
applying Theorem \ref{th-3c.2}  with the position $\psi_k=\psi,\,
\forall k\in \mathbb Z,$ and setting $X_k={\mathcal A}_0$ or
$X_k={\mathcal A}_1,$ according to the value of $s_k$ in the
considered sequence of two symbols.
\end{proof}

\bigskip

We end this paper with a result which applies Theorem \ref{th-3c.3}
to a framework which fits for possible applications
to the detection of chaos via computer assisted proofs
\footnote{$\,$ Indeed, it is not difficult to take advantage of the computations already performed
in articles like \cite{HuYa-05,YaLi-06,YaLiHu-05,YaYa-04}
(regarding topological horseshoes in the sense
of Kennedy and Yorke) and add to their conclusions
also the existence of infinitely many periodic solutions.}.
In view of it we preface the following definition
adapted from \cite{PaZa-04b, PaZa->, PiZa-05}.

\begin{definition}\label{def-3c.4} Let ${\widetilde{\mathcal M}}$ and ${\widetilde{\mathcal N}}$
be two oriented $N-$dimensional rectangles of the same metric space $Z.$ We say that
${\widetilde{\mathcal M}}$ is a \textit{vertical slab} of ${\widetilde{\mathcal N}}$
and write
$${\widetilde{\mathcal M}} \subseteq_{\,v} \, {\widetilde{\mathcal N}}$$
if ${{\mathcal M}}\subseteq {{\mathcal N}}$ and, either
$${{\mathcal M}}_{\ell} \subseteq {{\mathcal N}}_{\ell}\,\quad\mbox{and }\;
{{\mathcal M}}_{r} \subseteq {{\mathcal N}}_{r}\,,$$
or
$${{\mathcal M}}_{\ell} \subseteq {{\mathcal N}}_{r}\,\quad\mbox{and }\;
{{\mathcal M}}_{r} \subseteq {{\mathcal N}}_{\ell}\,,$$
so that any path in ${\mathcal M}$ joining the two sides of
${\mathcal M}^-$ is also a path in ${\mathcal N}$ and joins the two opposite
sides of ${\mathcal N}^-.$
\\
We say that
${\widetilde{\mathcal M}}$ is a \textit{horizontal slab} of ${\widetilde{\mathcal N}}$
and write
$${\widetilde{\mathcal M}} \subseteq_{\,h} \, {\widetilde{\mathcal N}}$$
if ${{\mathcal M}}\subseteq {{\mathcal N}}$ and
every path in ${\mathcal N}$ joining the two sides of
${\mathcal N}^-$ admits a sub-path in ${\mathcal M}$
that joins the two opposite
sides of ${\mathcal M}^-.$
\\
Given three oriented $N-$dimensional rectangles ${\widetilde{\mathcal A}},$
${\widetilde{\mathcal B}}$ and ${\widetilde{\mathcal E}}$ of the same metric space $Z,$
with ${\mathcal E}\subseteq {\mathcal A} \cap {\mathcal B},$
we say that ${\widetilde{\mathcal B}}$ \textit{crosses}
${\widetilde{\mathcal A}}$ \textit{in} ${\widetilde{\mathcal E}}$ and write
$${\widetilde{\mathcal E}}\in \{ {\widetilde{\mathcal A}}\pitchfork {\widetilde{\mathcal B}} \},$$
if
$${\widetilde{\mathcal E}}\subseteq_{\,v} \, {\widetilde{\mathcal A}}\quad\mbox{and } \;
{\widetilde{\mathcal E}}\subseteq_{\,h} \, {\widetilde{\mathcal B}}.$$
\end{definition}
\bigskip
The above definitions, which imitate the classical terminology in \cite[Ch.2.3]{Wi-88},
are topological in nature and therefore do not necessitate any metric assumption (like
smoothness, lipschitzeanity, or similar ones often required in the literature).
We also notice that the terms ``vertical'' and ``horizontal'' are employed
in a purely conventional manner; the vertical is the expansive
direction and the horizontal is the contractive one (in a quite broad sense).

\bigskip
Our next and final result (Theorem \ref{th-3c.cr})
depicts a situation when the domain and the codomain
of the mapping $\psi$ are two intersecting oriented
$N-$dimensional rectangles. A graphical
illustration of it can be found in Figure 3,
which is inspired by the Smale solenoid.

\begin{theorem}\label{th-3c.cr}
Let ${\widetilde{\mathcal A}}$ and ${\widetilde{\mathcal B}}$ be oriented $N-$dimensional rectangles
of a metric space $Z$ and let
${\mathcal D}\subseteq {\mathcal A}\cap D_{\psi}$ be a closed set such that
\begin{equation*}
({\mathcal D},\psi): {\widetilde{{\mathcal A}}}\stretchx {\widetilde{{\mathcal B}}}.
\end{equation*}
Assume there exist $m\geq 1$ oriented $N-$dimensional rectangles
$${\widetilde{\mathcal E}}_0\,,\dots,
{\widetilde{\mathcal E}}_{m-1} \,\in \{\,{\widetilde{\mathcal A}} \pitchfork
{\widetilde{\mathcal B}}\,\}.$$
Then, $\psi$ has at least a fixed point in each of the sets ${\mathcal D}\cap {\mathcal E}_i$
($i=0,\dots,m-1$). Moreover, if
$$m\geq 2 \quad\mbox{and } \;\; {\mathcal D}\cap {\mathcal E}_i\cap {\mathcal E}_j = \emptyset,\quad \mbox{for } i\not=j\;\,
(\,\forall \,i, j),$$
the following conclusions hold:
\begin{itemize}
\item[$(d_1)\;$] The map $\psi$ induces chaotic dynamics on $m$ symbols in the set ${\mathcal D}$
relatively to
$({\mathcal D} \cap {\mathcal E}_0,\dots,{\mathcal D} \cap {\mathcal E}_{m-1});$
\item[$(d_2)\;$] For each sequence of $m$ symbols $\textbf{s}=(s_n)_n\in \{0,1,\dots,m-1\}^{\mathbb N},$
there exists a compact connected set ${\mathcal C}_{\textbf{s}}\subseteq {\mathcal D} \cap {\mathcal E}_{s_0}$
which cuts the arcs between ${\mathcal E}_{\ell}^{s_0}$ and ${\mathcal E}_{r}^{s_0}$ in
${\mathcal E}_{s_0}$ and such that, for every $w\in {\mathcal C}_{\textbf{s}}\,,$
there is a sequence $(y_n)_{n}$ with $y_0 = w$ and
$$y_n \in {\mathcal E}_{s_n}\,,\quad \psi(y_n) = y_{n+1}\,,\;\forall\, n\geq 0.$$
The dimension of ${\mathcal C}_{\textbf{s}}$ at each point is at least $N-1.$
Moreover, $\mbox{\rm dim}({\mathcal C}_{\textbf{s}}\cap \vartheta {\mathcal A})\geq N - 2$ and
$\pi: h^{-1}({\mathcal C}_{\textbf{s}})\cap \partial I^N \to {\mathbb R}^{N-1}\setminus\{0\}$
is essential (where $\pi$ is defined as in \eqref{eq-3b.1} for $p_i=\tfrac 1 2,\,\forall\, i$).
\end{itemize}
\end{theorem}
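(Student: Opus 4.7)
The plan is to reduce the theorem to the previously established Theorem \ref{th-3c.1} and Theorem \ref{th-3c.2} by using the crossing hypothesis ${\widetilde{\mathcal E}}_i \in \{\,{\widetilde{\mathcal A}} \pitchfork {\widetilde{\mathcal B}}\,\}$ to \emph{propagate} the stretching condition from the original pair $({\widetilde{\mathcal A}},{\widetilde{\mathcal B}})$ to every pair $({\widetilde{\mathcal E}}_i,{\widetilde{\mathcal E}}_j)$. More precisely, I would first establish the intermediate claim
\begin{equation*}
({\mathcal D}\cap {\mathcal E}_i\,,\;{\mathcal K}\cap {\mathcal E}_i\cap \psi^{-1}({\mathcal E}_j),\;\psi):\; {\widetilde{\mathcal E}}_i \stretchx {\widetilde{\mathcal E}}_j\,,\qquad\forall\, i,j\in\{0,\dots,m-1\},
\end{equation*}
where ${\mathcal K}\subseteq {\mathcal D}$ is the compact witness furnished by the hypothesis $({\mathcal D},{\mathcal K},\psi):\, {\widetilde{\mathcal A}} \stretchx {\widetilde{\mathcal B}}$ (on which $\psi$ is continuous).

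The verification of the claim is a three-step chase along a path $\gamma$ in ${\mathcal E}_i$ joining the two sides of ${\mathcal E}_i^-$. The vertical slab property ${\widetilde{\mathcal E}}_i \subseteq_{\,v}\, {\widetilde{\mathcal A}}$ promotes $\gamma$ to a path in ${\mathcal A}$ joining the two sides of ${\mathcal A}^-$; the stretching hypothesis then supplies a sub-path $\sigma$ of $\gamma$ with $\overline{\sigma}\subseteq {\mathcal K}$ and $\psi(\overline{\sigma})$ a path in ${\mathcal B}$ joining the two sides of ${\mathcal B}^-$; the horizontal slab property ${\widetilde{\mathcal E}}_j \subseteq_{\,h}\, {\widetilde{\mathcal B}}$, applied to $\psi\circ\sigma$, yields a sub-interval $[c,d]$ of the domain of $\sigma$ such that $(\psi\circ\sigma)([c,d])$ lies in ${\mathcal E}_j$ and joins the two sides of ${\mathcal E}_j^-$. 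The restriction $\sigma':=\sigma|_{[c,d]}$ is then a sub-path of $\gamma$ with $\overline{\sigma'}\subseteq {\mathcal K}\cap {\mathcal E}_i\cap \psi^{-1}({\mathcal E}_j)$, and this witness set is compact, being the intersection of the compact set ${\mathcal K}$ with the closed set ${\mathcal E}_i$ and with $\psi^{-1}({\mathcal E}_j)\cap {\mathcal K}$ (which is closed in ${\mathcal K}$ by continuity).

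With the claim in hand, the first assertion is just the diagonal case $i=j$: Theorem \ref{th-3c.1} applied on the oriented rectangle ${\widetilde{\mathcal E}}_i$ produces a fixed point of $\psi$ inside ${\mathcal K}\cap {\mathcal E}_i\cap \psi^{-1}({\mathcal E}_i)\subseteq {\mathcal D}\cap {\mathcal E}_i\,.$ For conclusions $(d_1)$ and $(d_2)$ under the disjointness hypothesis, I would invoke Theorem \ref{th-3c.2} (in the slightly generalized form highlighted in Remark \ref{rem-3c.3}) with $\psi_k:=\psi$ and ${\widetilde{X}}_k:={\widetilde{\mathcal E}}_{s_k}$, for any prescribed two-sided sequence $(s_k)_{k\in{\mathbb Z}}\in \{0,\dots,m-1\}^{{\mathbb Z}}$: conclusion $(a_1)$ yields the bi-infinite orbit $(w_k)$ realizing the itinerary with $w_k\in {\mathcal D}\cap {\mathcal E}_{s_k}\,,$ conclusion $(a_3)$ (applied with $k=0$ and $l$ equal to the period) promotes periodic itineraries to genuinely periodic orbits, and conclusion $(a_2)$ at $j=0$ delivers the compact connected set ${\mathcal C}_{\textbf{s}}\subseteq {\mathcal D}\cap {\mathcal E}_{s_0}$ with the required cutting, dimension and essentialness properties. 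Together with Definition \ref{def-3c.3}, these give $(d_1)$ and $(d_2)$.

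The main technical subtlety I anticipate is keeping the compact witnesses honest: checking that the lifting in the third step of the propagation claim produces a genuine sub-path of the original $\gamma$ (so that Definition \ref{def-3c.2} is literally satisfied, not merely up to reparametrization), and that the disjointness hypothesis ${\mathcal D}\cap {\mathcal E}_i\cap {\mathcal E}_j=\emptyset$ enters only where strictly needed, i.e., to ensure that the pairwise-disjoint compact family $({\mathcal D}\cap {\mathcal E}_i)_{i=0,\dots,m-1}$ required by Definition \ref{def-3c.3} is well-defined. The fixed-point portion of the theorem requires no such disjointness, which is why $m$ can be as small as $1$ there.
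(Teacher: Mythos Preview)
Your approach is correct and constitutes a genuine variant of the paper's proof. The key difference is the choice of the ``ambient'' oriented rectangle on which the self-stretching is verified. The paper establishes the single relation
\[
({\mathcal D}\cap {\mathcal E}_i,\psi):\ {\widetilde{\mathcal B}} \stretchx {\widetilde{\mathcal B}},\qquad i=0,\dots,m-1,
\]
by first using the horizontal slab property $\widetilde{\mathcal E}_i\subseteq_h\widetilde{\mathcal B}$ to pass from a path in ${\mathcal B}$ to one in ${\mathcal E}_i$, then the vertical slab property to view it as a path in ${\mathcal A}$, and finally the stretching hypothesis to land back in ${\mathcal B}$; it then invokes Theorem~\ref{th-3c.3} with $X={\mathcal B}$ and ${\mathcal K}_i={\mathcal D}\cap{\mathcal E}_i$. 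You instead establish the full family of transitions
\[
({\mathcal D}\cap {\mathcal E}_i,\psi):\ {\widetilde{\mathcal E}}_i \stretchx {\widetilde{\mathcal E}}_j,\qquad \forall\, i,j,
\]
by applying the three ingredients in the opposite order (vertical slab, stretching, horizontal slab), and then feed these directly into Theorem~\ref{th-3c.2} with ${\widetilde X}_k={\widetilde{\mathcal E}}_{s_k}$. Your route is slightly more work (you prove $m^2$ stretching relations instead of $m$) but has the conceptual advantage of making the Markov-type transition structure between the ${\mathcal E}_i$'s explicit, and it yields the cutting conclusion in $(d_2)$ directly on ${\mathcal E}_{s_0}$ rather than on ${\mathcal B}$. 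The paper's route is more economical, since a single large rectangle ${\mathcal B}$ absorbs all the dynamics and Theorem~\ref{th-3c.3} packages the conclusions at once. Both arguments are complete; your remark about lifting sub-paths through $\psi$ in the third step is handled correctly because the sub-path of $\psi\circ\sigma$ is obtained by restricting the \emph{parameter} interval, so its preimage under composition is automatically a genuine sub-path of~$\gamma$.
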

\begin{proof}
First of all we show that
\begin{equation}\label{eq-3c.B}
(\mathcal D\cap{\mathcal E}_{i} ,\psi): {\widetilde{{\mathcal B}}}
\stretchx {\widetilde{{\mathcal B}}},\,\forall i=0,\dots,m-1.
\end{equation}
Indeed, let $\gamma$ be a path with ${\overline{\gamma}}\subseteq
\mathcal B$ and ${\overline{\gamma}}\cap \mathcal
B_{\ell}\not=\emptyset,\; {\overline{\gamma}}\cap \mathcal
B_{r}\not=\emptyset.$  Then, since ${\widetilde{\mathcal E_i}}
\subseteq_{\,h} \, {\widetilde{\mathcal B}},\,\forall
i=0,\dots,m-1,$ there exists a sub-path $\sigma (=\sigma_i)$ of
$\gamma$ such that ${\overline{\sigma}}\subseteq {\mathcal E_i}$ and
${\overline{\sigma}}\cap \mathcal E_{\ell}^i\not=\emptyset,\;
{\overline{\sigma}}\cap \mathcal E_{r}^i\not=\emptyset.$ Recalling now
that ${\widetilde{\mathcal E_i}} \subseteq_{\,v} \,
{\widetilde{\mathcal A}},\,\forall i=0,\dots,m-1,$ it holds that
${\overline{\sigma}}\subseteq {\mathcal E}_i \subseteq {\mathcal A}$
and ${\overline{\sigma}}\cap \mathcal A_{\ell}\not=\emptyset,\;
{\overline{\sigma}}\cap \mathcal A_{r}\not=\emptyset.$ Finally, since
$({\mathcal D},\psi): {\widetilde{\mathcal A}}\stretchx
{\widetilde{\mathcal B}},$ there is a sub-path $\eta (=\eta_i)$ of
$\sigma$ such that $\overline{\eta}\subseteq \mathcal D\cap{\mathcal
E}_{i} ,\,\psi({\overline{\eta}})\subseteq \mathcal B,$ with
$\psi({\overline{\eta}})\cap \mathcal B_{\ell}\not=\emptyset,\;
\psi({\overline{\eta}})\cap \mathcal B_{r}\not=\emptyset.$ In this way
we have proved that any path $\gamma$ with
${\overline{\gamma}}\subseteq \mathcal B$ and ${\overline{\gamma}}\cap
\mathcal B_{\ell}\not=\emptyset,\; {\overline{\gamma}}\cap \mathcal
B_{r}\not=\emptyset$ admits a sub-path $\eta$ such that
$\overline{\eta}\subseteq {\mathcal D}\cap{\mathcal E}_i$ and
$\psi(\overline{\eta})\subseteq {\mathcal B}$ with
$\psi(\overline{\eta})\cap
\mathcal B_{\ell}\not=\emptyset,\; \psi(\overline{\eta})\cap \mathcal
B_{r}\not=\emptyset.$
Therefore
the condition in \eqref{eq-3c.B} has been checked
and the
existence of at least a fixed point for $\psi$ in ${\mathcal
D\cap{\mathcal E}}_{i}\,$ ($\forall i=0,\dots,m-1$) follows by
Theorem \ref{th-3c.1}. To prove the remaining part of the
statement, we apply Theorem \ref{th-3c.3} with the positions
$X=\mathcal B$ and $\mathcal K_i={\mathcal D\cap{\mathcal
E}}_{i},\,$ for $i=0,\dots,m-1.$
\end{proof}

\newpage

\includegraphics[scale=0.3]{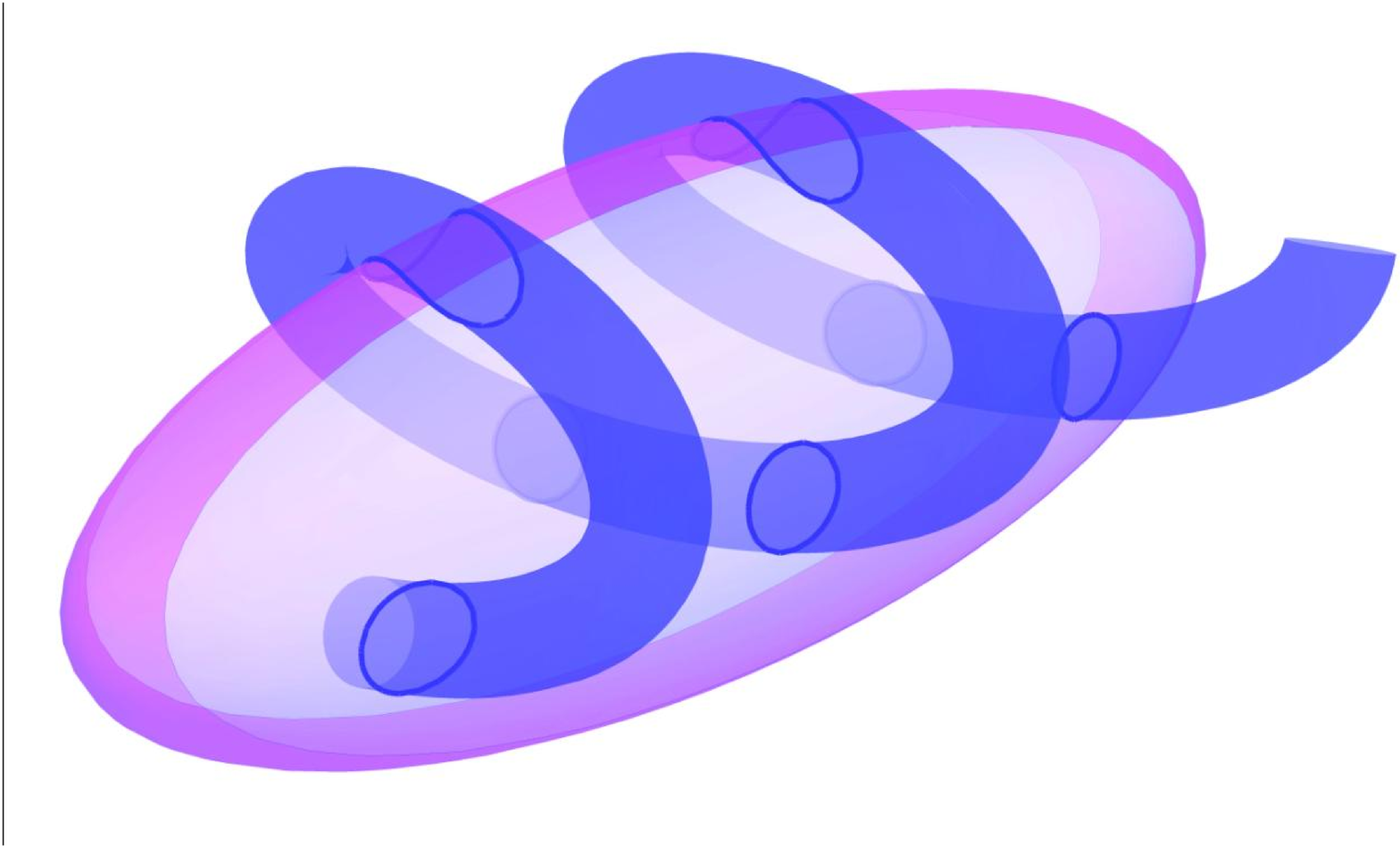}
\begin{quote}
{\small{Figure 3. The ellipsoidal body ${\mathcal A}$ is stretched by a continuous mapping $\psi$
to a subset of the spiral$-$like set ${\mathcal B}.$ Both ${\mathcal A}$ and ${\mathcal B}$ are
3$-$dimensional generalized rectangles that we orientate as follows: the compact sets
${\mathcal A}_{\ell}$ and ${\mathcal A}_{r}$ are the
closure of the two components of $\vartheta{\mathcal A}= \partial{\mathcal A}$ which
are obtained after removing the darker part of the ``lateral'' surface;
the compact sets ${\mathcal B}_{\ell}$ and ${\mathcal B}_{r}$ are the two discs at the ends of the spiral body
${\mathcal B}$ (the order in which we label the two parts of ${\mathcal A}^-$ and
${\mathcal B}^-$ can be chosen arbitrarily). According to Remark \ref{rem-3c.2},
the stretching condition $\psi: {\widetilde{\mathcal A}}\stretchx {\widetilde{\mathcal B}}$
is fulfilled if we assume that
$\psi({\mathcal A})\subseteq {\mathcal B}$ and that
$\psi({\mathcal A}_{\ell})\subseteq {\mathcal B}_{\ell}\,,$
as well as $\psi({\mathcal A}_{r})\subseteq {\mathcal B}_{r}\,.$
Note that we do not require $\psi$ to be a homeomorphism, nor
$\psi({\mathcal A})= {\mathcal B}.$ It is not even necessary that the end sets
${\mathcal B}_{\ell}$ and ${\mathcal B}_{r}$ of ${\mathcal B}$
lie outside ${\mathcal A}.$
Among the five intersections between
${\mathcal A}$ and ${\mathcal B},$ only two (namely, the ones visible as a full crossing of
the spiral$-$like set across the ellipsoidal body, that we call ${\mathcal E}_0$ and
${\mathcal E}_1$) correspond to a crossing in the sense of
Definition \ref{def-3c.4}. Therefore, Theorem \ref{th-3c.cr}
ensures the existence of at least a fixed point for $\psi$ both in ${\mathcal E}_0$ and ${\mathcal E}_1$
and, moreover, $\psi$ induces chaotic dynamics on two symbols in ${\mathcal A}$
(relatively to ${\mathcal E}_0$ and ${\mathcal E}_1$).
Even if the drawn figures look smooth,
there is no need of any regularity assumption neither for the sets
(except of being homeomorphic to a cube) nor for their intersections.
}
}
\end{quote}

\bigskip

\noindent
To conclude, we stress the fact that the definition of oriented $N-$dimensional rectangle can be slightly modified in
order to take into account suitable perturbations of the domain and of the
map. A similar setting has been analyzed, for instance, in \cite{CaZg-06},
\cite{KeYo-98}, \cite{WoZg-05} and \cite{Zg-99}.
A development of these topics (which have a relevant interest from the point
of view of the applications) will be studied in a
subsequent work, using our approach.

\bigskip
\noindent
\textbf{Acknowledgment}.
The authors thank professor Gianluca Gorni for his help in the use of Mathematica software.

\bigskip

\bigskip

\noindent
\address{
$~$
\\
{
\textsc{Fabio Zanolin}\\
University of Udine, Department of Mathematics and Computer Science,\\
via delle Scienze 206, 33100 Udine, Italy.
\\
mailto: \itshape{zanolin@dimi.uniud.it}}
\\
$~$
\\
{
\textsc{Marina Pireddu}\\
University of Udine, Department of Mathematics and Computer Science,\\
via delle Scienze 206, 33100 Udine, Italy.
\\
mailto: \itshape{pireddu@dimi.uniud.it}}
}

\end{document}